\documentclass[12pt]{amsart}
\usepackage{amsmath,amssymb,amsthm,amsrefs}
\usepackage{enumitem}
\usepackage{color,hyperref}
\hypersetup{colorlinks,breaklinks,
	linkcolor=blue,urlcolor=blue,
	anchorcolor=blue,citecolor=blue}
\usepackage{cleveref}
\theoremstyle{plain}
\newtheorem{theorem}{Theorem}[section]
\newtheorem{proposition}[theorem]{Proposition}

\theoremstyle{definition}

\newtheorem{remark}[theorem]{Remark}

\makeatletter
\@addtoreset{equation}{section}
\makeatother

\makeatletter
\newcommand{\Spvek}[2][r]{%
  \gdef\@VORNE{1}
  \left(\hskip-\arraycolsep%
    \begin{array}{#1}\vekSp@lten{#2}\end{array}%
  \hskip-\arraycolsep\right)}

\def\vekSp@lten#1{\xvekSp@lten#1;vekL@stLine;}
\def\vekL@stLine{vekL@stLine}
\def\xvekSp@lten#1;{\def\temp{#1}%
  \ifx\temp\vekL@stLine
  \else
    \ifnum\@VORNE=1\gdef\@VORNE{0}
    \else\@arraycr\fi%
    #1%
    \expandafter\xvekSp@lten
  \fi}
\makeatother

\begin{document}
\title[Capacity and measure approximations for Schr\"{o}dinger operators]{Capacity and measure approximations for Schr\"{o}dinger operators\\}

\author{Burak Hat\.{i}no\u{g}lu}
\address{Department of Mathematics, Michigan State University, East Lansing MI 48824, U.S.A.}
\email{hatinogl@msu.edu}

\author{Svetlana Jitomirskaya}
\address{Department of Mathematics, University of California, Berkeley CA 94720, U.S.A.}
\email{sjitomi@berkeley.edu}

\subjclass[2020]{30C85, 31A15, 34L40, 47B36}


\keywords{logarithmic capacity, harmonic measure, quasi-periodic Schr\"{o}dinger operators, ergodic Schr\"{o}dinger operators, rational frequency approximation, Lyapunov exponent, equilibrium measure, density of states}

\begin{abstract}
We prove that logarithmic capacity convergence for phase-union spectra of quasi-periodic Schr\"{o}dinger operators in the zero Lyapunov exponent regime is robust, requiring only continuity of the potential. Let $S^+(p/q)$ denote the union, over the phase, of the spectra at rational frequency $p/q$. We show that if the Lyapunov exponent vanishes on the spectrum $\Sigma(\alpha)$ at an irrational frequency $\alpha$, then for every sequence $p_n/q_n\to\alpha$, the logarithmic capacities 
$Cap(S^+(p_n/q_n))\longrightarrow Cap(\Sigma(\alpha)).$
We also prove convergence of the corresponding harmonic measures.

As a consequence, the equilibrium measures of $S^+(p_n/q_n)$ converge in the weak$^*$ topology to the density of states measure of the quasi-periodic Schr\"odinger operator. We extend these results to multi-frequency Schr\"odinger operators and prove analogous convergence theorems, for logarithmic capacity, harmonic measure, and equilibrium measure, for ergodic Schr\"odinger operators in a general setting where the almost sure spectrum is approximated in the Hausdorff metric by union spectra of periodic operators. This abstract formulation applies, in particular, to uniformly almost periodic potentials along sequences of almost periods. We also provide counterexamples when the limiting frequency is rational.

\end{abstract}

\maketitle

\section{Introduction}\label{Intro}

If $(\Omega,d\mu)$ is a probability measure space and $T : \Omega \rightarrow \Omega$ is an ergodic transformation, let
\begin{equation}
    (H_{\omega}\psi)_n = \psi_{n+1} + \psi_{n-1} + g(T^n\omega)\psi_n,
\end{equation}
where $g$ is a bounded, measurable function from $\Omega$ to $\mathbb{R}$. Then $\{H_{\omega}\}_{\omega \in \Omega}$ defines an ergodic family of Schr\"{o}dinger operators. 

For an ergodic family $\{H_{\omega}\}_{\omega \in \Omega}$, there exists a set $\Sigma$ independent of $\omega$ such that $\Sigma$ is the spectrum of $H_{\omega}$ for almost every $\omega$ with respect to the measure $\mu$ \cite{CFKS87}. The set $\Sigma$ is the almost sure spectrum of $H_{\omega}$. The same non-randomness is valid for the absolutely continuous, singular continuous, and pure point spectra of $H_{\omega}$ \cite{CFKS87}.

Letting $\Omega$ be $\mathbb{R}/\mathbb{Z}$ and letting the ergodic transformation $T$ be addition with $\alpha \in \mathbb{R}/\mathbb{Z}$, we get quasi-periodic Schr\"{o}dinger operators.

For an irrational $\alpha \in \mathbb{T}:= \mathbb{R}/\mathbb{Z}$, the quasi-periodic Schr\"odinger operator $$H_{\alpha,\theta}:~ l^2(\mathbb{Z}) \rightarrow l^2(\mathbb{Z})$$ with continuous potential $v \in \mathcal{C}(\mathbb{T},\mathbb{R})$ is given by
\begin{equation}\label{1}
    \big(H_{\alpha,\theta}\psi\big)_n = \psi_{n-1} + \psi_{n+1} + v(\alpha n + \theta)\psi_n,
\end{equation}
where $\theta \in \mathbb{T}$ and $\alpha \in \mathbb{T}$ are called the phase and frequency, respectively. The spectrum and the absolutely continuous spectrum of $H_{\alpha,\theta}$ are denoted by $\sigma(\alpha,\theta)$ and $\sigma_{ac}(\alpha,\theta)$, respectively, and are independent of $\theta$ for any irrational frequency $\alpha$. Therefore, we use the notation 
\begin{equation}\label{unionSpectra}
    \Sigma(\alpha) := \sigma(\alpha,\theta), \qquad \theta \in \mathbb{T}
\end{equation}
and
\begin{equation}\label{intersectionSpectra}
    \Sigma_{ac}(\alpha) := \sigma_{ac}(\alpha,\theta), \qquad \theta \in \mathbb{T}
\end{equation}
for irrational $\alpha$.

When the frequency is rational, $H_{p/q,\theta}$ becomes a periodic operator for any phase, with a purely absolutely continuous spectrum of a band-gap structure. In this case, $\sigma(p/q,\theta)$ depends on $\theta$, so we consider the union spectrum
\begin{equation}
    S_+(p/q) = \bigcup_{\theta \in \mathbb{T}}\sigma(p/q,\theta)
\end{equation}
and the intersection spectrum
\begin{equation}
    S_-(p/q) = \bigcap_{\theta \in \mathbb{T}}\sigma_{ac}(p/q,\theta)
\end{equation}
following \cite{AMS90} and \cite{JM12}. 

Dynamical aspects of $H_{\alpha,\theta}$ follows through the transfer matrix cocycles. Transfer matrices $A^E$ describe how solutions of the eigenvalue difference equation 
\begin{equation}
    H_{\alpha,\theta}\psi = E \psi
\end{equation}
are generated iteratively. Schr\"{o}dinger cocycles $(\alpha,A^E)$ provide the dynamical framework to capture this observation. On the other hand, the density of states measure $dk_{\alpha}$ carries out the spectral properties of $H_{\alpha,\theta}$. Its cumulative distribution $k_{\alpha}$ is called the integrated density of states. An important connection between the dynamical and spectral sides of $H_{\alpha,\theta}$ is 
\begin{equation}
    k_{\alpha}(E) = 1 - 2\rho(\alpha , E),
\end{equation}
where $\rho$ is the rotation number of the Schr\"{o}dinger cocycle $(\alpha,A^E)$. We introduce these spectral and dynamical concepts in the second part of Section \ref{Prelim}. 

Another dynamical description of $H_{\alpha,\theta}$ is through its spectrum. The spectrum $\Sigma(\alpha)$ is the bifurcation set for the energy-indexed family of cocycles $(\alpha,A^E)$. In other words, the spectrum is the set of $E$, where the Schr\"{o}dinger cocyle fails to be uniformly hyperbolic. This result is known as Johnson's theorem \cite{J86,JM17}.

Quasi-periodic Schr\"odinger operators appear in the modeling of the influence of an external magnetic field on a crystal layer. Their study gained prominence with work on the almost Mathieu operator, which is a special case with potential $v(x) = 2\lambda \cos(2\pi x)$, see \cite{flu}. The almost Mathieu operator is classified as subcritical, critical, and supercritical depending on the coupling constant $\lambda,$ with $\lambda=1$ being critical. This classification of energies was generalized to quasi-periodic Schr\"odinger operators with analytic potentials by Avila \cite{Avi15}, see Section \ref{Prelim} for details. 

In experiments and numerics, the irrational $\alpha$ are usually replaced by their rational approximations, so the corresponding convergence questions become important. When $p_n/q_n\to\alpha$ and $\alpha$ is irrational, then the sets $S_+(p_n/q_n)$ and $S_-(p_n/q_n)$ do converge to $\Sigma(\alpha)$ and $\Sigma_{ac}(\alpha)$ in Hausdorff metric \cite{AS83} and also in measure \cite{JM12}, see Section \ref{Literature} for details. However, this alone does not, in general, imply the convergence of logarithmic capacities \cite{KK22}, harmonic measures or equilibrium measures \cite{Tot24}.

This paper is the second of a series of papers in which we study convergence of logarithmic capacities and harmonic measures for union spectra of periodic  approximants. In the first paper \cite{HJ24}, we proved that for analytic potentials, convergence of the capacities holds when the $\Sigma(\alpha)$ has no supercritical energy (or equivalently Lyapunov exponent is zero on $\Sigma(\alpha)$), using Avila's global theory \cite{Avi15} and Chebyshev polynomials. Here we develop a robust way to obtain this result, using a purely potential-theoretic argument, allowing to replace analyticity with merely continuity, and extending it to multifrequency, and general ergodic potentials under the same zero Lyapunov exponent assumption.  This also yields convergence of harmonic measures and equilibrium measures; in the zero Lyapunov exponent regime the limiting equilibrium measure is the density of states measure.

From a dynamical systems point of view, rational frequency approximation is more than a technical limiting procedure. The rational operators provide periodic models which shadow the irrational cocycle dynamics, and the corresponding union spectra $S^+(p/q)$ may be viewed as periodic approximations to the irrational bifurcation set $\Sigma(\alpha)$. Our results show that this shadowing persists at the level of logarithmic capacity. In the zero Lyapunov exponent regime, the periodic equilibrium measures also converge to the density of states measure, so the potential-theoretic equilibrium of the periodic band sets converges to the natural spectral measure of the limiting ergodic system.

Periodic approximation has also appeared in a broader ergodic context. In particular, Last studied periodic Jacobi matrices obtained by cutting finite pieces of an ergodic potential and repeating them, and related the limsup of their spectra to the absolutely continuous spectrum. Our use of periodic approximants is different in emphasis: for the potential-theoretic conclusions below, the key topological input is Hausdorff convergence of the approximating union spectra to the limiting almost sure spectrum.

Although the ergodic formulation is the most general one, we present the one-frequency quasi-periodic results first. This is the setting in which the motivating approximation problem naturally arises: irrational frequencies are replaced by rational ones, and the relevant approximating objects are the phase-union spectra $S^+(p/q)$, also where examples with absolutely continuous spectrum are abundant. Treating this case separately also keeps the connection with Lyapunov exponents, rotation numbers, the density of states, and earlier results on rational frequency approximation explicit. The multi-frequency and ergodic results are then obtained by isolating the potential-theoretic mechanism behind the proof and applying it whenever the required Hausdorff convergence of the approximating spectra is available.

More precisely, we prove capacity convergence for one-frequency quasi-periodic Schr\"odinger operators with continuous potentials when the Lyapunov exponent vanishes on the spectrum (Section~4). This implies convergence of harmonic measures and equilibrium measures (Section~5). We then give multi-frequency analogues of these results (Section~6). Finally, we formulate an ergodic version in which the quasi-periodic structure is replaced by the assumption that the union spectra of a family of periodic approximants converge to the almost sure spectrum in the Hausdorff metric (Section~7). We also show that this Hausdorff hypothesis is satisfied for uniformly almost periodic potentials along sequences of almost periods. Section~8 gives counterexamples when the limiting frequency is rational.

Capacity is a set function that arises in potential theory as an analogue of the physical concept of the electrostatic capacity. The capacity of a set in the complex plane is called logarithmic capacity, since in two-dimensions the Newtonian potential kernel becomes logarithmic. 

The logarithmic capacity of a set in the complex plane is obtained by considering the infimum of the energy over the Borel probability measures supported on this set. The minimizing measure is called the equilibrium measure of the set. The extremal nature of the capacity is observed by two characterizations: one reflects the geometry of the set (diameter), and the other depends on the extremal polynomials on the set (Chebyshev polynomials). 

The nth diameter of a compact set $K$ in the complex plane is defined as the maximum of the geometric mean of the distances between all pairs of $n$ points, where the maximum is taken over the subsets of $K$ with cardinality $n$. The limit of the nth diameter of $K$ as $n$ tends to $\infty$ is called the transfinite diameter of $K$, which is the same as its logarithmic capacity.

The nth Chebyshev polynomial of a compact set $K$ in the complex plane is defined as the unique nth degree monic polynomial minimizing the supremum norm on $K$ over monic polynomials of the same degree. That minimizing norm value is the nth Chebyshev number of $K$. The limit of the nth root of the nth Chebyshev number of $K$ is again nothing but the logarithmic capacity of $K$.

Harmonic measure is a family of measures that extends the Dirichlet problem to proper subdomains (non-empty, connected, open subsets) of the extended complex plane $\overline{\mathbb{C}} := \mathbb{C} \cup \{\infty\}$. By definition, it is connected with harmonic functions and potential theory. It also appears in the study of Brownian motion and conformal maps \cite{GM05}. The harmonic measure at infinity is a special measure as it becomes the equilibrium measure of the boundary. If the boundary is a non-polar (nonzero capacity) set, then the harmonic measure is unique.

We now state two broad versions of our main conclusions. The one-frequency quasi-periodic theorems proved in Sections~4 and~5 are the motivating case; the following statements record the corresponding multi-frequency and abstract ergodic formulations.

\begin{theorem}
    Let $\vec{\alpha} \in \mathbb{T}^N$ such that $\{1,\alpha_1,\ldots,\alpha_N\}$ is rationally independent, and
    $$\vec{\alpha}_n := \bigg(\frac{p_{1,n}}{q_{1,n}},\frac{p_{2,n}}{q_{2,n}},\dots,\frac{p_{N,n}}{q_{N,n}}\bigg) \longrightarrow \vec{\alpha}$$ as $n \rightarrow \infty$, where $(p_{i,n},q_{i,n}) = 1$ for $1 \leq i \leq N$, $n \in \mathbb{N}$. Also let $H_{\vec{\alpha},\vec{\theta}}$ be a multi-frequency quasi-periodic Schr\"{o}dinger operator with a continuous potential $v$ and zero Lyapunov exponent on its spectrum $\Sigma(\vec{\alpha})$ such that
    \begin{itemize}
    \item $\Sigma(\vec{\alpha})$ denotes the spectrum of $H_{\vec{\alpha},\vec{\theta}}$,
    \item $S_+(\vec{\alpha}) := \bigcup_{\vec{\theta} \in \mathbb{T}^N}\sigma(\vec{\alpha},\vec{\theta})$, where $\sigma(\vec{\alpha},\vec{\theta})$ is the spectrum of $H_{\vec{\alpha},\vec{\theta}}$.
\end{itemize}
Then we have the following convergence results.
\begin{enumerate}
    \item \textbf{spectral convergence in logarithmic capacity:}
    \begin{equation}
        \lim_{n \rightarrow \infty} Cap\big(S_+(\vec{\alpha}_n)\big) = Cap\big(\Sigma(\vec{\alpha})\big).
    \end{equation}
    
    \item \textbf{spectral convergence in harmonic measure:} if $\omega_D$ denotes the harmonic measure for the domain $D \subset \overline{\mathbb{C}}$, then for any $z \notin \Sigma(\vec{\alpha})$
    \begin{equation}
        \omega_{\overline{\mathbb{C}} \setminus S_+(\vec{\alpha}_n)}(z,\cdot) \rightarrow \omega_{\overline{\mathbb{C}} \setminus \Sigma(\vec{\alpha})}(z,\cdot)
     \end{equation}
    in the weak*-topology as $n \rightarrow \infty$.
     
    \item \textbf{approximations of the density of states measure:} 
    \begin{equation}
        \mu_{S_+(\vec{\alpha}_n)} \rightarrow dk_{\vec{\alpha}}
     \end{equation}
    in the weak*-topology as $n \rightarrow \infty$, where $\mu_{S_+(\vec{\alpha}_n)}$ is the equilibrium measure of $S_+(\vec{\alpha}_n)$ and $dk_{\vec{\alpha}}$ is the density of states measure of $H_{\vec{\alpha},\vec{\theta}}$.
\end{enumerate}
    
\end{theorem}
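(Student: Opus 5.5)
Write $K_n := S_+(\vec{\alpha}_n)$ and $K := \Sigma(\vec{\alpha})$. The proof has four steps. First we establish Hausdorff convergence $K_n \to K$. Next we get upper semicontinuity of capacity, which is automatic. We then prove the reverse inequality using the zero Lyapunov exponent via the Thouless formula. Finally, the harmonic and equilibrium measure statements follow from standard potential theory once capacities converge.

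\textbf{Step 1: Hausdorff convergence.} This is the multi-frequency analogue of the Avron--Simon / Avron--van Mouche--Simon result. Let $E\in K_n$. Then there is $\vec\theta$ with $E\in\sigma(\vec\alpha_n,\vec\theta)$, hence a compactly supported approximate eigenvector on a box of length $L$ with error $O(1/L)$. By continuity of $v$ and $\vec\alpha_n\to\vec\alpha$, the same vector is an approximate eigenvector for $H_{\vec\alpha,\vec\theta}$. By minimality of the irrational rotation, $\sigma(\vec\alpha,\vec\theta)=K$ for every $\vec\theta$, so $\mathrm{dist}(E,K)\to0$ uniformly. The converse direction uses the same argument with the roles reversed, taking phases in $\mathbb{T}^N$.

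\textbf{Step 2: upper bound.} Since $K_n$ eventually lies in any neighborhood $K_\epsilon$ of $K$, and $\mathrm{Cap}(K_\epsilon)\downarrow\mathrm{Cap}(K)$ for compact $K$, we get $\limsup \mathrm{Cap}(K_n)\le \mathrm{Cap}(K)$.

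\textbf{Step 3: lower bound.} This is the main step. Pick a single phase $\vec\theta_n$, and let $\Delta_n$ be the discriminant of $H_{\vec\alpha_n,\vec\theta_n}$. For multiple frequencies the period is $q_n=\mathrm{lcm}(q_{i,n})$. The spectrum $\sigma(\vec\alpha_n,\vec\theta_n)=\Delta_n^{-1}([-2,2])$ has capacity exactly $1$ in the sense that it equals $2^{1/q_n}$ times the $q_n$-th root of the leading coefficient, which is $1$; so $\mathrm{Cap}\,\sigma(\vec\alpha_n,\vec\theta_n)=1$. Since $K_n\supseteq\sigma(\vec\alpha_n,\vec\theta_n)$, we get $\mathrm{Cap}(K_n)\ge1$.

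On the other side, by the Thouless formula, $L(E)=\int\log|E-E'|\,dk_{\vec\alpha}(E')$. Zero Lyapunov exponent on $K$ means the logarithmic potential of $dk$ vanishes on $\mathrm{supp}\,dk=K$. Hence $I(dk)=0$, and therefore $\mathrm{Cap}(K)\ge1$. Moreover, $L\ge0$ everywhere with $L=0$ quasi-everywhere on $K$, so $dk$ is the equilibrium measure and $\mathrm{Cap}(K)=e^{0}=1$ exactly.

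Combining with Step 2 gives $1\le\liminf\mathrm{Cap}(K_n)\le\limsup\le 1$. I expect the delicate points here to be justifying $\mathrm{supp}\,dk=K$ and the identification of $L$ vanishing on $K$ with the equilibrium condition for merely continuous $v$. Both are standard via upper semicontinuity of $L$ and Frostman's theorem.

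\textbf{Step 4: measures.} Let $\mu_n$ be the equilibrium measure of $K_n$. Any weak$^*$ limit $\mu$ of $\mu_n$ is supported on $K$ by Step 1. By lower semicontinuity of energy, $I(\mu)\le\liminf I(\mu_n)=\log\mathrm{Cap}(K)$. By uniqueness of the equilibrium measure (here $K$ is non-polar since $\mathrm{Cap}=1$), $\mu=\mu_K=dk_{\vec\alpha}$, which gives (3).

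For (2), fix $z\notin K$, so $z\notin K_n$ for large $n$. Green functions with pole at infinity converge: $g_{K_n}=\int\log|\cdot-t|\,d\mu_n-\log\mathrm{Cap}(K_n)$ converges locally uniformly off $K$, using Step 3 and (3). The harmonic measure at $z$ is the balayage of $\delta_z$, which can be written as $\omega(z,\cdot)=\mu_K+\frac{1}{2\pi}\Delta g(\cdot,z)$-type terms. Concretely, for a test function $f\in C_c$, let $u$ solve the Dirichlet problem with boundary data $f$. We approximate $f$ by logarithmic potentials $\int\log|\cdot-t|\,d\nu$ of smooth signed measures $\nu$. For such data the Dirichlet solution on $\overline{\mathbb{C}}\setminus K_n$ is expressed through $g_{K_n}(\cdot,t)$, and $g_{K_n}(\cdot,t)\to g_K(\cdot,t)$ locally uniformly because $K$ is regular (this follows from the vanishing of $L$ continuously on $K$, since $L$ is continuous there). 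Density of such potentials in $C(\text{neighborhood of }K)$ then gives weak$^*$ convergence of the harmonic measures.
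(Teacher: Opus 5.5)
Your argument for (1) is correct and essentially the paper's. The ingredients are: Hausdorff convergence, which the paper cites from Avron--Simon; the upper bound via the neighbourhoods $K_\epsilon$, exactly as in the proof of Theorem~\ref{Schroedingerresultirregular}; and the lower bound from $\mathrm{Cap}\,\sigma(\vec\alpha_n,\vec\theta)=1$ and $\mathrm{Cap}(K)=1$. The paper takes $\mathrm{Cap}(K)=1$ from Simon's theorem, while you derive it from the Thouless formula. Your Hausdorff sketch works because the localized trial vectors, obtained from the standard partition-of-unity lemma, live on boxes of length $L$ independent of the period. Your ``$2^{1/q_n}$'' formula is garbled: for monic $P$ of degree $q$, simply $\mathrm{Cap}\,P^{-1}([-2,2])=\mathrm{Cap}([-2,2])^{1/q}=1$.

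For (3) you take a more elementary route than the paper, which specializes Totik's theorem to $z=\infty$. You use weak$^*$ compactness, lower semicontinuity of energy and uniqueness of the equilibrium measure; note $\liminf I(\mu_n)=-\log\mathrm{Cap}(K)$, a harmless sign slip. This is correct and needs only $K_n\subset K_{\epsilon_n}$ and $\liminf\mathrm{Cap}(K_n)\ge\mathrm{Cap}(K)$. Your regularity claim is also right, but the reason is upper semicontinuity: $g_K=L$ is upper semicontinuous, nonnegative and zero on $K$, hence continuous. So under $L=0$ on all of $\Sigma$, the paper's separate irregular case is unnecessary.

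The gap is in (2). Everything rests on $g_{K_n}(\cdot,t)\to g_K(\cdot,t)$ for finite poles $t$, and ``because $K$ is regular'' does not prove it. Regularity plus Hausdorff convergence gives only one inequality. Since $g_K(\cdot,t)$ extended by $0$ is continuous and subharmonic off $t$, the minimum principle on $\overline{\mathbb{C}}\setminus K_n$ yields $g_{K_n}(\cdot,t)\ge g_K(\cdot,t)-\sup_{K_n}g_K(\cdot,t)$. The reverse inequality needs capacity input. For example, take $K=[0,1]$ and $K_n$ the union of intervals of length $e^{-n^3}$ centred at $j/n$: $K$ is regular and $K_n\to K$ in Hausdorff, yet $g_{K_n}\to+\infty$ off $K$. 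You have capacity input only for the pole at $\infty$, and transferring it to finite poles is essentially equivalent to the harmonic-measure convergence you want. The density step is also off: potentials of measures supported away from $K$ are harmonic near $K$, so they are not dense in $C$ of a planar neighbourhood. You would need density on a real interval, or poles near $K$ with uniform control in $t$.

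The simplest repair is the paper's: $d_H(K_n,K)\to0$ and $\mathrm{Cap}(K_n)\to\mathrm{Cap}(K)>0$ feed directly into Theorem~\ref{Totikharmonicmeasure}. Alternatively you can close the gap with the regularity you already have; the case $z=\infty$ is (3), so take finite $z\notin K$.
\begin{itemize}
\item The identity $U^{\omega_n(z,\cdot)}(x)=\log\frac{1}{|x-z|}+g_{K_n}(z)-g_{K_n}(x,z)$ gives $U^{\omega_n(z,\cdot)}\le\log\frac{1}{|\cdot-z|}+g_{K_n}(z)$ everywhere.
\item By the principle of descent and $g_{K_n}(z)\to g_K(z)$, any weak$^*$ limit $\sigma$ satisfies $U^\sigma\le\log\frac{1}{|\cdot-z|}+g_K(z)$.
\item Then $U^\sigma-U^{\omega(z,\cdot)}$ is harmonic on $\overline{\mathbb{C}}\setminus K$, vanishes at $\infty$, and has $\limsup\le0$ at every point of $K$ because $g_K(x,z)\to0$ there. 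By the maximum principle it vanishes identically.
\item Since $K$ has zero area, $\sigma=\omega(z,\cdot)$.
\end{itemize}
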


\begin{theorem}
Let $\{H_{\omega}\}_{\omega \in \Omega}$ be a family of ergodic Schr\"{o}dinger operators. Also let $\{\{H_{\omega,n}\}_{\omega \in W}\}_{n}$ be a sequence of families of periodic Schr\"{o}dinger operators for $W \subseteq \Omega$ such that
\begin{itemize}
    \item $\Sigma$ denotes the almost sure spectrum of $H_{\omega}$,
    \item $S_+(n) := \bigcup_{\omega \in W} \sigma(H_{\omega,n}) $, where $\sigma(H_{\omega,n})$ is the spectrum of $H_{\omega,n}$,
    \item The Lyapunov exponent of $H_{\omega}$ is zero on $\Sigma$, and
    \item $S_+(n)$ converges to $\Sigma$ in Hausdorff metric as $n \rightarrow \infty$.
\end{itemize}
Then we have the following convergence results.
\begin{enumerate}
    \item \textbf{spectral convergence in logarithmic capacity:}
    \begin{equation}
        \lim_{n \rightarrow \infty} Cap\big(S_+(n)\big) = Cap\big(\Sigma\big).
    \end{equation}
    
    \item \textbf{spectral convergence in harmonic measure:} if $\omega_D$ denotes the harmonic measure for the domain $D \subset \overline{\mathbb{C}}$, then for $z \notin \Sigma$
    \begin{equation}
        \omega_{\overline{\mathbb{C}} \setminus S_+(n)}(z,\cdot) \rightarrow \omega_{\overline{\mathbb{C}} \setminus \Sigma}(z,\cdot)
     \end{equation}
     in the weak*-topology as $n \rightarrow \infty$.
     
    \item \textbf{approximations of the density of states measure:} 
    \begin{equation}
        \mu_{S_+(n)} \rightarrow dk
     \end{equation}
    in the weak*-topology as $n \rightarrow \infty$, where $\mu_{S_+(n)}$ is the equilibrium measure of $S_+(n)$ and $dk$ is the density of states measure of $H_{\omega}$.
\end{enumerate}
\end{theorem}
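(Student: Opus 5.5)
The plan is to separate the argument into two inequalities, $\limsup_n Cap(S_+(n))\le Cap(\Sigma)$ and $\liminf_n Cap(S_+(n))\ge Cap(\Sigma)$, and to derive (2) and (3) from the capacity statement (1) together with the Hausdorff convergence.

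\textbf{Upper bound.} This half is purely potential-theoretic. Logarithmic capacity is upper semicontinuous under Hausdorff convergence of compact sets. Since $S_+(n)\to\Sigma$ in the Hausdorff metric, for every $\epsilon>0$ we eventually have $S_+(n)\subset \Sigma_\epsilon$, the closed $\epsilon$-neighborhood of $\Sigma$. Monotonicity gives $Cap(S_+(n))\le Cap(\Sigma_\epsilon)$. Finally $Cap(\Sigma_\epsilon)\downarrow Cap(\Sigma)$ as $\epsilon\to0$, by continuity of capacity along decreasing compact sets.

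\textbf{Lower bound.} This is where the zero Lyapunov exponent enters, and I expect it to be the main obstacle.

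1. Each periodic operator $H_{\omega,n}$, of period $q_n$, has spectrum $\sigma_{\omega,n}=\{E:|\Delta_{\omega,n}(E)|\le 2\}$, where $\Delta_{\omega,n}$ is the discriminant, a monic polynomial of degree $q_n$. Its Green function is $g_{\sigma_{\omega,n}}(E)=\frac1{q_n}\log\big|\tfrac{\Delta+\sqrt{\Delta^2-4}}{2}\big|$. This gives $Cap(\sigma_{\omega,n})=1$, since the coefficients of the off-diagonal Laplacian are $1$.
2. By Thouless' formula, $L(E)=\int\log|E-x|\,dk(x)$. Zero Lyapunov exponent on $\Sigma$ means the logarithmic potential of $dk$ vanishes on $\operatorname{supp} dk=\Sigma$. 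Frostman's theorem then forces $dk$ to be the equilibrium measure of $\Sigma$ and $Cap(\Sigma)=1$, with $L=g_\Sigma$.
3. Since $S_+(n)\supset\sigma_{\omega,n}$, monotonicity gives $Cap(S_+(n))\ge 1=Cap(\Sigma)$.

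The subtle points are checking that the Thouless formula and $\operatorname{supp} dk=\Sigma$ hold in the stated generality (bounded measurable $g$), and handling the normalization if off-diagonal terms differ from $1$. In the Jacobi case the product of the off-diagonal terms gives the capacity on both sides. If $W$ is so small that this argument breaks, I would instead compare Green functions: $g_{S_+(n)}\le g_{\sigma_{\omega,n}}$, and the periodic Lyapunov exponents converge to $L$ off $\Sigma$.

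\textbf{Harmonic measure.} Having (1), I will show $g_{S_+(n)}\to g_\Sigma$ locally uniformly on $\overline{\mathbb C}\setminus\Sigma$.

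1. The functions $g_{S_+(n)}$ are eventually harmonic on compacts $K$ avoiding $\Sigma$, and they are locally bounded there. Harnack's principle therefore gives subsequential limits $h$.
2. Any such $h$ is harmonic off $\Sigma$, satisfies $h(z)-\log|z|\to-\log Cap(\Sigma)$ at $\infty$ by (1), and is nonnegative.
3. We also have $h\le g_\Sigma$. Indeed, comparing with $g_{\Sigma_\epsilon}$ on the complement of $\Sigma_\epsilon$ gives $g_{S_+(n)}\ge g_{\Sigma_\epsilon}$ eventually. In the other direction, $g_\Sigma - h$ is harmonic near $\infty$ with zero value at $\infty$ and superharmonic-type boundary behaviour. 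The minimum principle combined with equality of the constants at $\infty$ forces $h=g_\Sigma$.
4. The equilibrium measures are $\mu=\frac1{2\pi}\Delta g$ in the distributional sense. Convergence of $g$ in $L^1_{loc}$ then yields $\mu_{S_+(n)}\to\mu_\Sigma=dk$ weak$^*$. This uses that $g_{S_+(n)}$ is locally uniformly bounded, since it is dominated by $g_{\Sigma_\epsilon}$-type bounds, and that weak$^*$ limits are supported on $\Sigma$ by Hausdorff convergence.

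The harmonic measure at a finite $z$ follows in the same way. One applies the conformal inversion $w=1/(E-z)$, under which $\omega(z,\cdot)$ becomes the equilibrium measure of the image sets. Hausdorff convergence is preserved, and the Green functions with pole at $z$ converge by the same Harnack/identification argument. The identification of constants uses the already-proved convergence at $\infty$: by symmetry of the Green function, $g(z,\infty)$ converges.
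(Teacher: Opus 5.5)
Your proposal is correct. For (1) it is the paper's argument. The upper bound via $S_+(n)\subset\Sigma_\epsilon$ and continuity of capacity along decreasing compacta is exactly the proof of Theorem~\ref{Schroedingerresultirregular}. The lower bound $Cap(S_+(n))\ge Cap(\sigma(H_{\omega,n}))=1=Cap(\Sigma)$ is also the paper's, with two small differences in how the inputs are obtained:
\begin{itemize}
\item You get $Cap(\sigma(H_{\omega,n}))=1$ from the explicit Green function of the polynomial preimage $\Delta^{-1}([-2,2])$, where the paper uses the Totik--Peherstorfer characterization.
\item You get $Cap(\Sigma)=1$ and $dk=\mu_\Sigma$ from Thouless plus Frostman, where the paper quotes Simon's Theorem~\ref{SimonCapacity}; that theorem is essentially the same argument.
\end{itemize}

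The genuine difference is in (2) and (3). The paper invokes Totik's theorem (Hausdorff convergence plus capacity convergence imply weak$^*$ convergence of harmonic measures), specializes to $z=\infty$, and uses $\mu_\Sigma=dk$. You instead reprove the needed special case: you show $g_{S_+(n)}\to g_\Sigma$ locally uniformly on $\overline{\mathbb{C}}\setminus\Sigma$, then pass to measures through $\mu=\frac{1}{2\pi}\Delta g$ in $L^1_{loc}$. For finite poles you use a M\"obius inversion, fixing the constant at $\infty$ via the symmetry $g_D(\infty,z)=g_D(z,\infty)$ and the already-proved convergence at $z$; that step is sound. The two routes trade off as follows:
\begin{itemize}
\item The paper's route is shorter, and Totik's theorem covers arbitrary compact sets, including disconnected complements where only the unbounded component matters.
\item Your route is self-contained and exploits that compact subsets of $\mathbb{R}$ have connected complement. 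It also yields the stronger statement that the Green functions themselves converge locally uniformly.
\end{itemize}

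Fix these slips in the write-up:
\begin{itemize}
\item In step 3 the inequality is reversed. Comparison with $g_{\Sigma_\epsilon}$ gives $h\ge g_{\Sigma_\epsilon}$, hence $h\ge g_\Sigma$. Either use $g_{\Sigma_\epsilon}\uparrow g_\Sigma$, or argue directly by the extended maximum principle, since $h\ge 0$ and $g_\Sigma\to0$ quasi-everywhere on $\Sigma$. Equality then follows because $h-g_\Sigma\ge 0$ is harmonic on $\overline{\mathbb{C}}\setminus\Sigma$ and vanishes at $\infty$ by (1).
\item The domination needed in step 4 is the upper bound $g_{S_+(n)}(z)\le \log(|z|+R)-\log Cap(S_+(n))\le\log(|z|+R)$, which uses $Cap(S_+(n))\ge 1$. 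It does not come from a $g_{\Sigma_\epsilon}$ bound.
\item Subsequential limits come from compactness of locally bounded families of harmonic functions, not from Harnack's monotone principle.
\end{itemize}
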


In Section~7 we also verify the Hausdorff convergence hypothesis in a standard deterministic situation. If $V$ is uniformly almost periodic and $q_j$ is a sequence of almost periods, then the spectra of the $q_j$-periodic potentials obtained by repeating length-$q_j$ blocks converge, after taking the union over the hull, to the common hull spectrum in the Hausdorff metric. Therefore, under the additional assumption that the Lyapunov exponent vanishes on this spectrum, the capacity, harmonic measure, and equilibrium measure conclusions of Theorem~1.2 apply to uniformly almost periodic potentials along such sequences of almost periods.

The paper is organized as follows. Section~2 reviews the necessary background from logarithmic potential theory and the spectral theory of discrete Schr\"odinger operators. Section~3 summarizes the convergence results for rational approximants that we use, including convergence of the union and intersection spectra in the Hausdorff metric and in Lebesgue measure. Section~4 proves the convergence of logarithmic capacities in the one-frequency quasi-periodic setting. Section~5 derives the corresponding convergence of harmonic and equilibrium measures. Section~6 extends the results to multi-frequency quasi-periodic Schr\"odinger operators. Section~7 gives the abstract ergodic formulation and proves an almost-periodic corollary by verifying the Hausdorff convergence hypothesis along almost periods. Section~8 discusses counterexamples when the limiting frequency is rational.

\section{Preliminaries}\label{Prelim}

\subsection{Logarithmic potential theory}

Our results discuss convergence of logarithmic capacities and harmonic measures for rational frequency approximates, so let us start by reviewing some basics of logarithmic potential theory to recall these main concepts, which are found, e.g. in \cite{GM05} or \cite{Ran95}.

\subsubsection{Logarithmic capacity}

If $\mu$ is a finite Borel measure, supported on a compact subset of the complex plane, then the \textit{logarithmic potential} of $\mu$ is the function $$U^{\mu}:\mathbb{C}\rightarrow(-\infty,\infty]$$ 
defined by 
\begin{equation}
U^{\mu}(z):=\int\log\frac{1}{|z-w|}d\mu(w)
\end{equation}
and the \textit{logarithmic energy} of $\mu$ is $I(\mu)\in(-\infty,\infty]$, defined by
\begin{equation}
I(\mu):=\int U^{\mu}(z)d\mu(z)=\int\int\log\frac{1}{|z-w|}d\mu(w)d\mu(z).
\end{equation}
Letting $K$ be a compact subset of $\mathbb{C}$ and $M(K)$ be the set of Borel probability measures compactly supported within $K$, we define the \textit{equilibrium measure} for $K$, $\mu_{K}\in M(K)$ as
\begin{equation}
    I(\mu_{K})=\displaystyle\inf_{\mu\in M(K)}I(\mu).
\end{equation}
The \textit{logarithmic capacity} of a subset $E$ of the complex plane is given by
\begin{equation}
Cap(E):=\sup_{\mu\in M(E)}\exp\big(-I(\mu)\big).
\end{equation}
In particular, if $K$ is compact with equilibrium measure $\mu_{K}$, then 
\begin{equation}
    Cap(K)=\exp\big(-I(\mu_{K})\big).
\end{equation}
A Borel set $E\in\mathbb{C}$ is $polar$ if $Cap(E) = 0$, i.e. $I(\mu)=+\infty$ for every $\mu\in M(E)$ with supp$\mu\subseteq E$. A property is said to hold \textit{nearly (or quasi) everywhere} if it holds up to a polar set. 
Two important examples are $Cap([-1,1])=1/2$ and $Cap(\overline{\mathbb{D}})=1$.

Some elementary properties of the logarithmic capacity (Theorem 5.1.2 and Theorem 5.1.3 in \cite{Ran95}) are as follows:
\begin{enumerate}
\item If $E_{1}\subseteq E_{2}$, then $Cap(E_{1})\leq Cap(E_{2})$.
\item If $E\subset\mathbb{C}$, then $Cap(E)=\sup\{Cap(K)$ $|$ $compact$ $K\subset E\}$.
\item If $E\subset\mathbb{C}$, then $Cap(a E+b)=|\alpha|Cap(E)$ for $a,b\in\mathbb{C}$.
\item If $K$ is a compact subset of $\mathbb{C}$, then $Cap(K) = Cap(\partial_e K)$, where $\partial_e$ denotes the exterior boundary.
\item If $K_1 \supset K_2 \supset K_3 \supset \cdots$ are compact subsets of $\mathbb{C}$ and $K = \cap_n K_n$, then
    \begin{equation}\label{CapCompactConvergence}
    Cap(K) = \lim_{n \rightarrow \infty} Cap(K_n).
    \end{equation}
\item If $B_1 \subset B_2 \subset B_3 \subset \cdots$ are Borel subsets of $\mathbb{C}$ and $B = \cup_n B_n$, then
    \begin{equation}
    Cap(B) = \lim_{n \rightarrow \infty} Cap(B_n).
    \end{equation}
\end{enumerate}

Two characterizations of the logarithmic capacity are important in understanding its extremal nature. 

The first characterization is given by the \textit{transfinite diameter}. The \textit{nth diameter} of a compact set $K$, denoted by $\delta_{n}(K)$, is the supremum of the geometric mean of distances between pairs among n-tuples of $K$, i.e.
\begin{equation}
\delta_{n}(K):=\sup_{z_{1},\cdots,z_{n}\in K}\Big\{\prod_{j\textless k}|z_{j}-z_{k}|^{\frac{2}{n(n-1)}}\Big\}.
\end{equation}
An n-tuple $z_{1},\cdots,z_{n} \in K$ for which the supremum is attained is called a \textit{Fekete n-tuple} for $K$. A classical theorem going back to Fekete and Szeg\"{o} (Theorem 5.5.2 in \cite{Ran95}) says that the sequence $\{\delta_{n}(K)\}_{n\geq2}$ is decreasing and
\begin{equation}
\lim_{n\rightarrow\infty}\delta_{n}(K)=Cap(K).
\end{equation}

The second characterization of logarithmic capacity is given by the \textit{Chebyshev polynomials}. Let $K$ be a compact subset of $\mathbb{C}$ and $T_{n,K}$ be the monic polynomial of degree n such that 
\begin{equation}
    ||{T}_{n,K}||_{K}\leq||P||_{K}
\end{equation}
for any monic polynomial $P$ of degree $n$, where $||\cdot||_{K}$ denotes the uniform (supremum) norm over $K$. Then $T_{n,K}$ is called the \textit{nth Chebyshev polynomial on $K$} and $||{T}_{n,K}||_{K}$ is called the \textit{nth Chebyshev number of K}, denoted by $t_{n}(K)$. The sequence $\{t_{n}(K)\}_n$ is not necessarily convergent. However, the sub-additivity property for logarithms of Chebyshev numbers implies the existence of $\lim_{n \rightarrow \infty} t_{n}^{1/n}(K)$. This limit is called the Chebyshev number of $K$, and another classical result of Szeg\"{o} (Corollary 5.5.5 in \cite{Ran95}) says that it is nothing but the logarithmic capacity of $K$, i.e.
\begin{equation}
    \lim_{n\rightarrow\infty}[t_{n}(K)]^{\frac{1}{n}}=
\inf_{n\geq 1}[t_{n}(K)]^{\frac{1}{n}}=Cap(K).
\end{equation}

The alternation theorem allows us to make a connection with the set $S_+(p/q)$ and its qth Chebyshev polynomial, which was discussed in \cite{HJ24}. If $P$ is a real polynomial of degree $n$, then $P$ has an \textit{alternating set} in a compact set $K \subset \mathbb{R}$ if there exists $\{x_k\}_{k=0}^n$ in $K$ satisfying $x_0 < x_1 < \cdots < x_n$ such that
\begin{equation}
    P(x_k) = (-1)^{n-k}||P||_{K}.
\end{equation}
The alternation theorem (Theorem 1.1 in \cite{CSZ17}) says that $T_{n,K}$ has an alternating set in $K$ for a compact $K\subset \mathbb{R}$ and, conversely, any monic polynomial with an alternating set in $K$ is the Chebyshev polynomial on $K$. The following result of Peherstorfer and Totik helps us to understand the logarithmic capacity of the spectrum with a rational frequency and estimate the logarithmic capacity of $S_+(p/q)$. 

\begin{theorem}\label{TotikPeherstorfer}\emph{\textbf{(}\cite{Tot11}, \textit{Theorem 1} \textrm{ and } \cite{Peh11}, \textit{Proposition 1.1}}\textbf{)}
Let $l \in \mathbb{N}$ and $$K=\displaystyle\bigcup_{j=1}^{l}[a_{j},b_{j}].$$ Also, let $T_{n,K}$ and $t_{n}(K)$ denote the nth Chebyshev polynomial of $K$ and the nth Chebyshev number of $K$, respectively, i.e. 
$$
t_{n}(K) := ||T_{n,K}(z)||_{K}.
$$
For a natural number $n\geq 1$ the following are pairwise equivalent.
\begin{itemize}
\item[(a)] $ t_n(K)/[Cap(K)]^n=2$.\\

\item[(b)] $T_{n,K}$ has $n+l$ extreme points on $K$.\\

\item[(c)] $K=\big\{z$ $|$ $T_{n;K}(z)\in[-t_{n}(K),t_{n}(K)]\big\}$.\\

\item[(d)] If $\mu_{K}$ denotes the equilibrium measure of $K$, then each
$\mu_{K}([a_{j},b_{j}])$ for $j=1,2,\dots,l$, is of the form
$q_{j}/n$ with integer $q_{j}$, where $q_{j}+1$ is the number of extreme points on $[a_{j},b_{j}]$.\\

\item[(e)] With $\pi(x)=\prod_{j=1}^{l}(x-a_{j})(x-b_{j})$, the equation
\begin{equation*}
P_{n}^{2}(x)-\pi(x)Q_{n-l}^{2}(x)= c
\end{equation*}
is solvable for the polynomials $P_{n}$ and $Q_{n-l}$ of degree $n$ and $n-l$, respectively, where $c$ is a positive constant.
\end{itemize}
\end{theorem}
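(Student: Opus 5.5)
The plan is to route every equivalence through (c). That is, I show (c)$\Leftrightarrow$(a), (c)$\Leftrightarrow$(b), (c)$\Leftrightarrow$(d) and (c)$\Leftrightarrow$(e). The main computational tool is the explicit Green function of a polynomial preimage. If $P$ has degree $n$ and leading coefficient $c$, and $K=P^{-1}([-1,1])$, then
\begin{equation*}
g_K(z)=\frac1n\log\bigl|P(z)+\sqrt{P(z)^2-1}\bigr| .
\end{equation*}
Comparing with $\log|z|-\log Cap(K)$ at infinity gives $Cap(K)^n=1/(2|c|)$. For (c)$\Rightarrow$(a), apply this with $P=T_{n,K}/t_n(K)$; it gives $Cap(K)^n=t_n(K)/2$.

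For (a)$\Rightarrow$(c), set $K'=T_{n,K}^{-1}([-t_n(K),t_n(K)])$. Then $K\subseteq K'$, and $K'\subset\mathbb R$ is a finite union of intervals by the alternation theorem and the real-rootedness of $T^2-t^2$. The formula above gives $Cap(K')^n=t_n(K)/2=Cap(K)^n$. If $K'\setminus K$ contained a nondegenerate interval, then $Cap(K')>Cap(K)$. Indeed, $\mu_K$ would be a measure on $K'$ different from $\mu_{K'}$, because $\mu_{K'}$ charges every subinterval of $K'$; uniqueness of the energy minimizer then forces $I(\mu_{K'})<I(\mu_K)$. Hence $K'\setminus K$ is finite. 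Since $K'$ has no isolated points when $T$ is real-rooted with all critical values of modulus at least $t$, we get $K'=K$.

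For (c)$\Leftrightarrow$(b): under (c), $K$ is a union of $n$ bands, on each of which $T$ is monotone from $\pm t$ to $\mp t$. A component of $K$ containing $m$ bands has exactly $m+1$ points where $|T|=t$. Summing over the $l$ components gives $n+l$ extreme points. Conversely, suppose there are $n+l$ extreme points. Each gap $(b_j,a_{j+1})$ can contain at most one critical point. Counting zeros of $T'$ (degree $n-1$), together with the interlacing of extreme points, then forces every gap critical value to satisfy $|T|>t$, and forces $T$ to attain $\pm t$ at all $2l$ endpoints. This yields $T^{-1}([-t,t])=K$.

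For (c)$\Rightarrow$(d): $\mu_K$ is the pullback under $T$ of the arcsine measure on $[-t,t]$, divided by $n$, so every band carries mass $1/n$. Hence $\mu_K([a_j,b_j])=q_j/n$, where $q_j$ is the number of bands, i.e.\ the number of extreme points minus one. For (d)$\Rightarrow$(c), let $\tilde g$ be the harmonic conjugate of $g_K$ on $\overline{\mathbb C}\setminus K$. Its period around $[a_j,b_j]$ equals $2\pi\mu_K([a_j,b_j])$. Under (d), $F=\exp\bigl(n(g_K+i\tilde g)\bigr)$ is therefore single valued. Then $P=\tfrac12(F+1/F)$ is entire with a pole of order $n$ at infinity, i.e.\ a polynomial, and $P^{-1}([-1,1])=\{g_K=0\}=K$. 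Rescaling $P$ to be monic gives $T_{n,K}$ by the alternation theorem.

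For (c)$\Rightarrow$(e): $T^2-t^2$ has simple zeros at the $2l$ endpoints and double zeros at interior extreme points. Hence $T^2-t^2=\pi Q_{n-l}^2$, which is the Pell equation with $c=t^2$. For (e)$\Rightarrow$(c): on $K$ we have $\pi\le0$, so $|P_n|\le\sqrt c$ there. On the open gaps and outside $[a_1,b_l]$ we have $\pi>0$, so $|P_n|\ge\sqrt c$, with equality only at zeros of $Q$. Thus $P_n^{-1}([-\sqrt c,\sqrt c])=K\cup Z$ with $Z$ finite.

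The step I expect to be the main obstacle is excluding $Z$: a zero of $Q$ in a gap would be an isolated point where $P^2$ touches $c$. The first argument I would try counts points of $K$ where $|P|=\sqrt c$ and shows they already number $n+l$. These are the $2l$ simple endpoints plus the double roots of $Q$ inside $K$, obtained by counting sign changes of $P$ on $K$ together with degrees. That count leaves no zero of $Q$ available for the gaps. Once $Z$ is excluded, $P_n$ normalized to be monic has an alternating set in $K$, hence equals $T_{n,K}$ by the alternation theorem, and (c) follows.
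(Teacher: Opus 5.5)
The paper gives no proof of this statement; it is quoted from Totik and Peherstorfer, so your proposal has to stand on its own. Your implications (a)$\Leftrightarrow$(c), (c)$\Rightarrow$(b), (c)$\Rightarrow$(d), (c)$\Rightarrow$(e) and (d)$\Rightarrow$(c) are sound. In (d)$\Rightarrow$(c) you should add the reflection step: normalize $\tilde g$ so that $F(\bar z)=\overline{F(z)}$; then $|F|=1$ on $K$ gives $F_-=1/F_+$, so $P$ has equal boundary values from both sides and extends across $K$.

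\textbf{Gap in (b)$\Rightarrow$(c).} You assert that each gap of $K$ contains at most one critical point of $T_{n,K}$, but this is not a property of Chebyshev polynomials. Take $K=[-1,-a]\cup[a,1]$ with $a>1/2$ and $n=3$. The polynomial $x^3-(1-a+a^2)x$ alternates on $\{-1,-a,a,1\}$ and is monotone on $[a,1]$, so it is $T_{3,K}$. Both of its critical points $\pm\sqrt{(1-a+a^2)/3}$ lie in the single gap $(-a,a)$. So the claim would have to be derived from (b), and that derivation is the whole content of the implication; ``counting zeros of $T'$'' does not supply it. A clean replacement is to count bands. The set $E=T^{-1}([-t,t])\supseteq K$ has $n$ bands, and a component of $E$ made of $m$ bands contains exactly $m+1$ points with $|T|=t$. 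If that component contains $k\ge 1$ components of $K$, it contributes at most $m+1\le m+k$ extreme points. If $k=0$ it contributes $0<m=m+k$. Summing gives at most $n+l$ extreme points. Equality forces each component of $E$ to contain exactly one component of $K$, with both endpoints of that component of $E$ lying in $K$; hence $E=K$.

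\textbf{Gap in (e)$\Rightarrow$(c).} Your argument only controls the real set $P_n^{-1}([-\sqrt c,\sqrt c])\cap\mathbb R$, whereas (c) concerns the complex preimage. The count you hope for ($n+l$ points of $K$ with $|P_n|=\sqrt c$, i.e.\ $n$ real zeros of $P_n$) is equivalent to all zeros of $Q_{n-l}$ lying in $K$. That requires excluding \emph{non-real} zeros of $Q_{n-l}$, not just zeros in the gaps. Counting sign changes of $P_n$ on $K$ against $\deg P_n=n$ only shows that $P_n$ has at most $n$ real zeros; it cannot force $n$ of them.

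The missing input is complex-analytic, and you already have it from (d)$\Rightarrow$(c). Take $P_n,Q_{n-l}$ real, as you implicitly do, and take the branch of $\sqrt{\pi(z)}$ analytic on $\mathbb C\setminus K$ with $\sqrt{\pi(z)}\sim z^l$. Choose the sign of $Q_{n-l}$ so that $\Phi=(P_n+\sqrt{\pi}\,Q_{n-l})/\sqrt c$ grows like $z^n$.
\begin{itemize}
\item Since $\Phi\cdot(P_n-\sqrt{\pi}\,Q_{n-l})/\sqrt c=1$, $\Phi$ is zero-free on $\mathbb C\setminus K$.
\item Since $\sqrt{\pi}$ is purely imaginary on $K$, we have $|\Phi|=1$ there.
\item Hence $\log|\Phi|=n\,g_K>0$ off $K$.
\item Since $P_n/\sqrt c=\tfrac12(\Phi+\Phi^{-1})$, the Joukowski map gives $P_n^{-1}([-\sqrt c,\sqrt c])=K$ in $\mathbb C$.
\end{itemize}
Your alternation step then yields (c). Equivalently, single-valuedness of $\Phi$ gives $n\mu_K([a_j,b_j])\in\mathbb Z$, which reduces (e) to your (d)$\Rightarrow$(c).
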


The ratio in item (a) is called the Widom factor \cite{GH15}, denoted by $W_{n}(K)$, and the study of its estimates and asymptotics is an active research area \cite{AGH16,CSZ17,CSZ22}.

\subsubsection{Harmonic measure}

The harmonic measure allows us to extend the Dirichlet problem to proper subdomains (nonempty, connected, open subsets) of the extended complex plane $\overline{\mathbb{C}} := \mathbb{C} \cup \{\infty\}$. Letting $D$ be a proper subdomain of $\overline{\mathbb{C}}$ and $\mathcal{B}(\partial D)$ be the $\sigma$-algebra of Borel subsets of $\partial D$, we define a \textit{harmonic measure} for $D$ as a function $$\omega_D:D \times \mathcal{B}(\partial D) \rightarrow [0,1]$$ such that
\begin{itemize}
    \item for each $z \in D$, the map $B \mapsto \omega_D(z,B)$ is a Borel probability measure on $\partial D$, and
    \item if $h: \partial D \rightarrow \mathbb{R}$ is a continuous function, then the representation 
    \begin{equation*}
        h(z) := \int_{\partial D} h(\zeta)~d\omega_{D}(z,\zeta) 
    \end{equation*}
    holds for $z \in D$ such that $h$ is harmonic on $D$ and continuous on $\overline{D}$.
\end{itemize}
The main example is the unit disk, where its harmonic measure is given by the Poisson formula as
\begin{equation*}
    d\omega_{\mathbb{D}}(z,\zeta) = \frac{1}{2\pi}\frac{1 - |z|^2}{|\zeta - z|^2}~ |d\zeta|.
\end{equation*}
A critical observation is that if $\partial D$ is not polar, then $\omega_{D}$ exists and it is unique (Theorem 4.3.2 in \cite{Ran95}). If $\partial D$ is polar, there is no harmonic measure. Motivated by solving the Dirichlet problem, the harmonic measure is defined for continuous functions on the boundary of proper subdomains of the extended complex plane. However, the relation assumed in the definition of the harmonic measure can be extended to bounded Borel functions in $\partial D$ (Theorem 4.3.3 in \cite{Ran95}). 

Harmonic measures satisfy monotonicity in the following sense: Let $D_1$ and $D_2$ be domains in $\overline{\mathbb{C}}$ with non-polar boundaries and $D_1 \subset D_2$. If $B$ is a Borel subset of $\partial D_1 \cap \partial D_2$, then
\begin{equation*}
    \omega_{D_1}(z,B) \leq \omega_{D_2}(z,B)
\end{equation*}
for $z \in D_1$ (Corollary 4.3.9 in \cite{Ran95}). 

Another important result relates harmonic and equilibrium measures as follows: The equilibrium measure of a non-polar compact set $K \subset \mathbb{C}$ is given by 
\begin{equation}
    \mu_{K} = \omega_{D}(\infty,\cdot),
\end{equation}
where $D$ is the connected component of $\overline{\mathbb{C}} \setminus K$ that contains $\infty$ (Theorem 4.3.14 in \cite{Ran95}).

Having a solution of the Dirichlet problem for a domain is characterized by its regularity. If $D$ is a proper subdomain of $\overline{\mathbb{C}}$ and $\zeta \in \partial D$, then a \textit{barrier} at $\zeta$ is a subharmonic function $h$ defined on $D \cap \mathcal{N}$, where $\mathcal{N}$ is an open neighborhood of $\zeta$, satisfying $h < 0$ on $D\cap \mathcal{N}$ and $\lim_{z \rightarrow \zeta} h(z) = 0$. We call a boundary point \textit{regular} if a barrier exists at that point. If every boundary point of $D$ is regular, then $D$ is called \textit{regular}. A domain is regular if and only if the Dirichlet problem is solvable for it (Corollary 4.1.8 in \cite{Ran95}). 

A characterization of regularity of a point is given by equilibrium measures. If we define $K := \overline{\mathbb{C}} \setminus D$ for a proper subdomain $D$ containing $\infty$, then $\zeta \in \partial D$ is regular if and only if $K$ is non-polar and $U^{\mu_K}(\zeta) = I(\mu_K)$ (Theorem 4.2.4 in \cite{Ran95}). Note that $I(\mu_K) = \log(Cap(K))$ if $K$ is compact. 

A geometric characterization given by Wiener uses rings (Theorem 7.1 in \cite{GM05}). If we let $A_{n,\zeta}$ be the ring
\begin{equation*}
    A_{n,\zeta} := \{z~:~2^{-n} \leq |z - \zeta| \leq 2^{-n+1}\}
\end{equation*}
for a boundary point $\zeta \in \partial D$ of the domain $D \subset \overline{\mathbb{C}}$, then $\zeta$ is a regular point if and only if
\begin{equation*}
    \sum_{n=1}^{\infty} \frac{n}{\log\big(1/Cap(A_{n,\zeta} \cap \partial D)\big)} = \infty.
\end{equation*}
For examples, let us focus on compact subsets of the real line. When we say a compact set $K$ is regular, we mean regularity of $\mathbb{C} \setminus K$. A \textit{gap} of a compact $K \subset \mathbb{R}$ is a bounded connected component of $\mathbb{R} \setminus K$. If there are only finitely many gaps and no component of $K$ is a single point, we call $K$ a \textit{finite gap} set. Any finite gap set is regular. More interesting examples appear in the class of homogeneous sets in the sense of Carleson \cite{Car83, SY97}. A compact set $K \subset \mathbb{R}$ is \textit{homogeneous} if there is an $\varepsilon > 0$ such that
\begin{equation*}
    \big|(x-\delta,x+\delta) \cap K\big| \geq \varepsilon \delta
\end{equation*}
for all $\delta < \textrm{diam}(K)$ and all $x \in K$. Homogeneous sets, and in particular positive measure Cantor sets, are regular \cite{CSZ17}. We will use regularity in our proofs in terms of Green's function, which is also related with the harmonic measure and the Dirichlet problem. 

\subsubsection{Green's function}

Green's function is another fundamental concept in the potential theory, providing fundamental solutions of the Laplacian with zero boundary values. In our results, we use Green's function with the pole at infinity defined on the sets whose complements are compact sets, so we modify the definition accordingly. Let $K \subset \mathbb{C}$ be non-polar and compact. The \textit{Green's function} $G_K$ is the unique function defined on $\mathbb{C} \setminus K$ that satisfies the following properties:
\begin{itemize}
    \item $G_K$ is harmonic on $\mathbb{C} \setminus K$,
    \item $G_K(z) = \log|z| + \mathcal{O}(1)$ as $z \rightarrow \infty$,
    \item $G_K(z) \rightarrow 0$ as $z \rightarrow \zeta$ for nearly every $\zeta \in \partial K$.
\end{itemize}

The Green's function $G_K$ is explicitly given by
\begin{equation*}
    G_K(z) = U^{\mu_K}(z) - I(\mu_K) = U^{\mu_K}(z) - \log\left(Cap(K)\right),
\end{equation*}
where $\mu_K$ is the equilibrium measure of $K$. The set $K$ is regular if and only if $G_K$ is zero on $K$ and continuous on $\mathbb{C}$. In the literature, this characterization is also considered as the definition of the \textit{regularity with respect to the Dirichlet problem (or regularity for the potential theory)}.

Let us finish our discussion of potential theory with a maximum principle for subharmonic functions that will be needed in Section \ref{supercritical}.
\begin{theorem}\label{ExtendedMaximumPrinciple}\emph{\textbf{(}\cite{Ran95}, \textit{Theorem 3.6.9}\textbf{)}}
Let $D$ be a domain in $\mathbb{C}$ and let $u$ be a subharmonic function on $D$ which is bounded above. If $\partial D$ is non-polar and $$\limsup_{z \rightarrow \zeta} u(z) \leq 0$$ for nearly every $\zeta \in \partial D$, then $u \leq 0$ on $D$
\end{theorem}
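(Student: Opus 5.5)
The plan is to perturb $u$ by a small multiple of an auxiliary superharmonic function that blows up on the exceptional boundary set. The classical maximum principle then applies to the perturbed function, and we let the perturbation tend to zero.

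Let $E\subset\partial D$ be the set of boundary points where $\limsup_{z\to\zeta}u(z)\le 0$ fails, together with $\infty$ if $\infty\in\partial D$. By hypothesis $E$ is polar. We may assume $E$ is Borel, enlarging it to a polar $G_\delta$ set if necessary. Write $E=\bigcup_k E_k$ with each $E_k$ a bounded polar set.

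\emph{Step 1: a measure whose potential is infinite on $E$.} By Evans' theorem, or by its standard weaker form that every polar set lies in the $+\infty$-set of a potential, there is a finite Borel measure $\nu$ with compact support such that $U^{\nu}=+\infty$ on $E\cap\mathbb{C}$ and $U^\nu\not\equiv+\infty$. We obtain $\nu$ by summing suitably scaled measures $\nu_k$ attached to the sets $E_k$.

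\emph{Step 2: control at infinity.} Since $\partial D$ is non-polar, property (2) of capacity gives a compact non-polar $K\subset\partial D\cap\mathbb{C}$. Put
\begin{equation*}
s(z):=U^{\nu}(z)-\nu(\mathbb{C})\,U^{\mu_K}(z).
\end{equation*}
The first term is superharmonic. The second term is harmonic on $D$ because $K\cap D=\emptyset$, and $U^{\mu_K}$ is bounded above by $\log(1/Cap(K))$ via Frostman. Both potentials behave like $-\log|z|$ near $\infty$, so $s(z)=O(1)$ there and $s$ is bounded below on $D$ near every finite boundary point. On the finite points of $E$ we have $s=+\infty$.

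\emph{Step 3: the perturbed maximum principle.} For $\varepsilon>0$ set $u_\varepsilon:=u-\varepsilon s$, which is subharmonic on $D$. At a boundary point outside $E$ we get $\limsup u_\varepsilon\le 0+\varepsilon C$, using that $s$ is bounded below. At a finite point of $E$, lower semicontinuity of $U^\nu$ gives $s\to+\infty$ there, while $u$ is bounded above, so $\limsup u_\varepsilon=-\infty$.

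The point $\infty$ needs more care when it lies in $\partial D$. To handle it I would replace $K$'s equilibrium potential by $\nu(\mathbb{C})$ times the Green-type term $-G_K$. Alternatively, I would add $\varepsilon'\,(\log|z-a|-\log R)$-type terms with $a$ and $R$ chosen so that this summand goes to $-\infty$ only where needed. Then the classical maximum principle on $\overline{\mathbb{C}}$ yields $u_\varepsilon\le C\varepsilon$ on $D$.

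Letting $\varepsilon\to 0$ gives $u\le 0$ on $D\setminus\{s=+\infty\}$. The set $\{s=+\infty\}$ is polar and hence of Lebesgue measure zero. The sub-mean value inequality for $u$ over small discs then upgrades this to $u\le 0$ everywhere on $D$.

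The main obstacle is Steps 1--2: building one auxiliary function that is $+\infty$ on the whole polar exceptional set, bounded below near the remaining boundary, and compatible with the behavior at $\infty$. This is exactly where non-polarity of $\partial D$ enters, since $K$ compensates the logarithmic growth of $U^\nu$. I would first prove the result for bounded $D$, where the term involving $\mu_K$ is unnecessary, and then reduce the general case to it by an inversion $z\mapsto 1/(z-a)$ with $a\in K$. Under this inversion polar sets stay polar and subharmonicity is preserved.
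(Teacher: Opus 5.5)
The paper does not prove this statement; it only cites Ransford, Theorem~3.6.9. For bounded $D$ your argument is the standard one and is correct: take $s=U^{\nu}$ for a compactly supported $\nu$ that is $+\infty$ on the exceptional set, apply the classical maximum principle to $u-\varepsilon s$, and remove the Lebesgue-null set $\{U^\nu=+\infty\}$ by the sub-mean value inequality. You do not need Evans' theorem or a $G_\delta$ enlargement here: $\zeta\mapsto\limsup_{z\to\zeta}u(z)$ is upper semicontinuous on $\partial D$, so the exceptional set is a countable union of compact polar sets.

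\textbf{The gap: unbounded $D$.} This is the case in which non-polarity of $\partial D$ is actually needed, and it is the case the paper uses. On $D=\mathbb{C}\setminus S_+(p_n/q_n)$ nothing is assumed at $\infty$, and there $u-\Gamma_n$ tends to $\log\bigl(\mathrm{Cap}(S_+(p_n/q_n))/\mathrm{Cap}(\Sigma(\alpha))\bigr)-\Gamma_n$, which is exactly the quantity to be bounded. Your construction fails here in three ways.
\begin{enumerate}
\item A compactly supported $\nu$ cannot have $U^\nu=+\infty$ on an unbounded $E\cap\mathbb{C}$, because $U^\nu$ is harmonic off $\operatorname{supp}\nu$.
\item More seriously, $s=U^\nu-\nu(\mathbb{C})U^{\mu_K}\to 0$ at $\infty$, because the two $\log|z|$ terms cancel exactly. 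Hence $\limsup_{z\to\infty}u_\varepsilon=\limsup_{z\to\infty}u$ is uncontrolled, and the classical maximum principle cannot be invoked.
\item None of your remedies fixes this. Swapping $U^{\mu_K}$ for a Green function of $K$ shifts $s$ only by a constant, so the cancellation persists. A term $\log|z-a|$ needs $a\notin\overline{D}$, which is impossible when $D$ is dense, as $\mathbb{C}\setminus S_+(p_n/q_n)$ is. The inversion $z\mapsto 1/(z-a)$ with $a\in K\subset\partial D$ does not give a bounded domain, since $a$ is a limit of points of $D$ and is sent to $\infty$.
\end{enumerate}

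\textbf{How to close it.} The logarithmic growth at $\infty$ must be beaten with room to spare, not merely cancelled.
\begin{enumerate}
\item Put $h:=-U^{\mu_K}-\log\mathrm{Cap}(K)$. By Frostman $h\ge 0$ on $\mathbb{C}$; $h$ is harmonic on $D\subset\mathbb{C}\setminus K$; and $h(z)=\log|z|+O(1)$.
\item For $\eta>0$ set $v:=u-\eta h$. It is subharmonic and bounded above on $D$. It satisfies $\limsup_{z\to\zeta}v\le\limsup_{z\to\zeta}u$ at finite boundary points. It tends to $-\infty$ at $\infty$, because $u$ is bounded above.
\item Choose $R$ with $v<0$ on $D\cap\{|z|\ge R\}$. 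Each component of $D\cap\{|z|<R\}$ is a bounded domain. Its boundary points lie either in $\partial D$, with bounded polar exceptional set, or in $D\cap\{|z|=R\}$, where $\limsup v\le v(\zeta)<0$ by upper semicontinuity.
\item Your bounded case gives $v\le 0$ on each such component, hence on all of $D$. Letting $\eta\to0$ gives $u\le 0$.
\end{enumerate}
Alternatively, keep your one-step version but take $\nu$ with $\int\log(1+|w|)\,d\nu(w)<\infty$ instead of compact support. Then use the coefficient $\nu(\mathbb{C})+1$ in front of $-U^{\mu_K}$, so that $s\ge\log|z|-C\to+\infty$ and $s$ is still bounded below on bounded sets.
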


\subsection{Spectral theory}

On the spectral theory side we have two different pictures depending on whether we have a rational or irrational frequency $\alpha$, i.e. a periodic or quasi-periodic operator, respectively. All we review below can be found in textbooks or survey papers, e.g. \cite{Kuc16,Tes00} for periodic and \cite{CFKS87,DF22} for quasi-periodic Schr\"{o}dinger operators. 

For rational $\alpha$, the operator $H_{\alpha,\theta}$ is periodic, so using standard results of Floquet-Bloch theory one gets basic spectral properties of the spectrum, listed below in Proposition \ref{PropertiesofDiscriminant}. These properties are given in terms of the discriminant $t_{p/q}(\theta,E)$ as
\begin{equation}
    \sigma(p/q,\theta) = t_{p/q}(\theta,\cdot)^{-1}\big([-2,2]\big),
\end{equation}
where the discriminant is defined as
\begin{align}
    t_{p/q}(\theta,E) &:= \textrm{tr} \Bigg\{\prod_{j=n-1}^{0} A^E\Big(j\frac{p}{q}+\theta\Big)\Bigg\},\\
        A^E(x) &= \begin{pmatrix}
        E-v(x) & -1\\
        1 & 0 \end{pmatrix} \label{A^Edef}.
\end{align}

\begin{proposition}\label{PropertiesofDiscriminant}\emph{\textbf{(}\cite{JM12}, \textit{Fact 4.1}\textbf{)}}
    Let $p/q$ be a rational number such that $(p,q) = 1$. Then for any $\theta \in \mathbb{T}$ we have the following properties.
    \begin{enumerate}
        \item The discriminant $t_{p/q}(\theta,E)$ is a monic polynomial in $E$ of degree $q$.
        \item The discriminant $t_{p/q}(\theta,E)$ splits over the real line with $q$ distinct roots.
        \item The discriminant $t_{p/q}(\theta,E)$ is greater than or equal to $2$ at all its local maxima and less than or equal to $-2$ at all its local minima.
        \item The spectrum $\sigma(p/q,\theta)$ is the inverse image of $[-2,2]$ under $t_{p/q}(\theta,E)$ on the real line, consists of $q$ possibly touching but not overlapping bands and is purely absolutely continuous.
    \end{enumerate}
\end{proposition}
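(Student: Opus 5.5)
We prove the four items in order. Fix $\theta$ and write $B_j(E):=A^E\big(j\tfrac{p}{q}+\theta\big)$, so the discriminant is
$$t_{p/q}(\theta,E)=\operatorname{tr}\big(B_{q-1}(E)\cdots B_0(E)\big),$$
the trace of the monodromy over one period $q$. (The product index in the displayed definition should read $j=q-1,\dots,0$.) Items (1) and (2) are finite-dimensional statements about this polynomial. Items (3) and (4) are Floquet--Bloch statements, which we obtain by relating $E$-values to $q\times q$ Floquet matrices.

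\textbf{Item (1).} Each entry of $B_j(E)$ has degree at most one in $E$, and only the $(1,1)$ entry $E-v$ has degree one. Induction on the number of factors shows that the $(1,1)$ entry of the product is monic of degree $q$ and the other entries have lower degree. The $(2,2)$ entry has degree at most $q-2$. Hence the trace is monic of degree $q$.

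\textbf{Floquet description.} For $\phi\in[0,\pi]$, consider the eigenvalue equation $H\psi=E\psi$ with the twisted boundary condition $\psi_{n+q}=e^{i\phi}\psi_n$. This is the $q\times q$ Hermitian matrix $H_\phi$, which is the periodic Jacobi matrix with corner entries $e^{\pm i\phi}$. The equation has a nontrivial solution exactly when the monodromy has eigenvalue $e^{i\phi}$. Since the monodromy has determinant $1$, this happens exactly when $t(E)=2\cos\phi$. The map $\psi\mapsto(\psi_0,\psi_1)$ is a bijection from the solution space of $H\psi=E\psi$ onto $\mathbb{C}^2$, and it identifies $\ker(H_\phi-E)$ with the eigenspace of the monodromy for $e^{i\phi}$. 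As a result:
\begin{itemize}
\item[(i)] the roots of $t(E)-2\cos\phi$ are exactly the $q$ real eigenvalues of $H_\phi$, so every root is real;
\item[(ii)] for $\phi\in(0,\pi)$ the eigenvalues $e^{\pm i\phi}$ of the monodromy are distinct, so each eigenspace is one-dimensional and $\dim\ker(H_\phi-E)=1$.
\end{itemize}

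\textbf{Item (2).} Take $\phi=\pi/2$, so the equation is $t(E)=0$. By (ii) every eigenvalue of the Hermitian matrix $H_{\pi/2}$ is simple, so $H_{\pi/2}$ has $q$ distinct real eigenvalues. Since $t$ is monic of degree $q$, these are exactly its $q$ roots, and the roots are distinct. More generally, for every $c\in(-2,2)$ the polynomial $t-c$ has $q$ distinct real roots.

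\textbf{Item (3).} Suppose $E_0$ is a local maximum of $t$ with $t(E_0)=c<2$. We will reach a contradiction.
\begin{itemize}
\item If $c>-2$, then $E_0$ is a double root of $t-c$, contradicting the simplicity just shown.
\item If $c\le -2$, the horizontal line at height $c'\in(c,2)$ meets the graph near $E_0$ at two extra points. Count real solutions of $t=c'$, with multiplicity, as $c'$ moves through $(-2,2)$: the count is always exactly $q$.
\end{itemize}
To handle the second case, the plan is to use Rolle's theorem. Between consecutive zeros of the derivative $t'$ the function $t$ is monotone, and $t'$ has exactly $q-1$ real zeros counted with multiplicity. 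These zeros must interlace with the $q$ simple roots of $t-c'$ for every $c'\in(-2,2)$. If a local maximum lay below $2$, then for $c'$ slightly above $\max(c,-2)$ the graph would cross $c'$ fewer than $q$ times, since a hump that fails to reach height $2$ loses two crossings. This contradicts the count of exactly $q$ roots for every $c'\in(-2,2)$. The same argument applied to $-t$ handles the local minima.

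\textbf{Item (4).} Standard Floquet theory gives
$$\sigma(H)=\bigcup_{\phi}\sigma(H_\phi)=t^{-1}([-2,2]),$$
via the direct-integral decomposition of $H$. The direct-integral picture also gives purely absolutely continuous spectrum, because the band functions are analytic in $\phi$ and nonconstant. By item (3), on each of the $q$ monotone pieces of $t$ between consecutive critical points, $t$ sweeps all of $[-2,2]$. So $t^{-1}([-2,2])$ consists of $q$ closed bands, which can touch only at critical points where $|t|=2$.

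\textbf{Main obstacle.} The hardest step is organising the root-count argument in item (3) cleanly. A cleaner route is to use the $q$ eigenvalue branches $E_k(\phi)$, $k=1,\dots,q$, of $H_\phi$. Each branch is continuous and strictly monotone on $(0,\pi)$, by analytic perturbation theory and the simplicity in (ii). Monotonicity means $t$ attains each value $2\cos\phi$ once on each branch, which forces every critical value of $t$ to satisfy $|t|\ge 2$.
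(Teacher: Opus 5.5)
The paper gives no proof of this proposition; it quotes it from \cite{JM12} as a standard consequence of Floquet--Bloch theory, and your derivation follows exactly that standard route and is correct: twisted Hermitian matrices $H_\phi$ with $t(E)=2\cos\phi$, simplicity of the roots for $\phi\in(0,\pi)$, Rolle's theorem with the count of monotone pieces, and the direct-integral decomposition. The one slip is in item (3), where you say a level $c'\in(c,2)$ meets the graph near $E_0$ at two extra points. It is the reverse: both monotone pieces adjacent to a local maximum of height $c<c'$ stay below $c'$, so $t=c'$ has at most $q-2$ real solutions. That is exactly what your subsequent Rolle argument uses, and it disposes of the case $c\in(-2,2)$ as well.
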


\subsubsection{Lyapunov exponent}

For irrational $\alpha$, a main tool in our proofs will be the Lyapunov exponent, which characterizes the dynamical properties of solutions to the second-order eigenvalue difference equation
\begin{equation}\label{eigenvalueEquation}
    H_{\alpha,\theta}\psi = E\psi
\end{equation}
over $\mathbb{C}^{\mathbb{Z}}$. For fixed $\alpha \in \mathbb{T}$ and $E \in \mathbb{R}$, we define $A^E : \mathbb{T} \rightarrow SL(2,\mathbb{C})$ as in \eqref{A^Edef}. We call the pair $(\alpha,A^E)$ a Schr\"{o}dinger cocycle and define it as a linear skew-map on $\mathbb{T} \times \mathbb{C}^2$
\begin{equation}
    (\alpha,A^E)(\theta,v) := (\theta + \alpha, A^E(\theta)v), \qquad \theta \in \mathbb{T},~ v \in \mathbb{C}^2.
\end{equation}
Iterations of $(\alpha,A^E)$ produce the solution of \eqref{eigenvalueEquation} with the initial condition $(\psi_0,\psi_{-1})^{\mathrm{T}}$ as
\begin{equation}
\big(\alpha,A^E(\theta)\big)^n\Big(\theta,\begin{pmatrix}
       \psi_0 \\
        \psi_{-1} \end{pmatrix}\Big) = \Big(\theta + n\alpha,\begin{pmatrix}
       \psi_n \\
        \psi_{n-1} \end{pmatrix}\Big), \qquad n \in \mathbb{N}
\end{equation}
The \textit{Lyapunov exponent} of the cocycle $(\alpha,A^E)$ (and hence the discrete Schr\"{o}dinger operator $H_{\alpha,\theta}$) is defined by
\begin{equation}\label{LyapunovExponentDefinition}
    L(\alpha,A^E) := \inf_{n \in \mathbb{N}}\frac{1}{n}\int_{\mathbb{T}}\log\Big|\Big|\prod_{j=n-1}^{0} A^E\Big(j\alpha+\theta\Big)\Big|\Big|d\theta,
\end{equation}
where $||\cdot||$ denotes any matrix norm. By Kingman's sub-additive ergodic theorem, if $\alpha$ is irrational, then we have
\begin{equation}\label{LyapunovExponentIrrational}
    L(\alpha,A^E) = \lim_{n \rightarrow \infty}\frac{1}{n}\log\Big|\Big|\prod_{j=n-1}^{0} A^E\Big(j\alpha+\theta\Big)\Big|\Big|
\end{equation}
for a.e. $\theta \in \mathbb{T}$ and if $\alpha = p/q$ is rational with $(p,q)=1$ we have
\begin{equation}\label{LyapunovExponentRational}
    L(p/q,A^E) = \frac{1}{q}\int_{\mathbb{T}} \log \rho
    \Bigg(\prod_{j=n-1}^{0} A^E\Big(j\frac{p}{q}+\theta\Big)\Bigg)d\theta,
\end{equation}
where $\rho(M)$ is the spectral radius of a matrix $M \in M_2(\mathbb{C})$. 

For fixed irrational $\alpha$, as a function of $E \in \mathbb{C}$, the Lyapunov exponent 
$$L_{\alpha}(E) := L(\alpha,A^E)$$ 
has the following properties:
\begin{enumerate}
    \item $L_{\alpha}(E) \geq 0$ for $E \in \mathbb{C}$
    \item $L_{\alpha}(E) > 0$ for $E \in \mathbb{C} \setminus \Sigma(\alpha)$.
    \item $L_{\alpha}(E)$ is a subharmonic function.
    \item $L_{\alpha}(E)$ is harmonic on $\mathbb{C} \setminus \Sigma(\alpha)$.
    \item $L_{\alpha}(E) = \log|E| + \mathcal{O}(1/|E|)$ as $|E| \rightarrow \infty$
\end{enumerate}

\subsubsection{Density of states}

Another important object associated with $H_{\alpha,\theta}$ is the density of states measure, which will appear in Sections \ref{HarmonicMeasures}, \ref{MultiFrequency} and \ref{Ergodic}. The \textit{density of states measure} of the family of Schr\"{o}dinger operators $\{H_{\alpha,\theta}\}_{\theta \in \mathbb{T}}$ is the measure $dk_{\alpha}$ defined by
\begin{equation}\label{DOSMdefinition}
    \int g ~dk_{\alpha} = \int_{\mathbb{T}} \langle \delta_0, g(H_{\alpha,\theta})\delta_0 \rangle ~d\theta
\end{equation}
for any bounded measurable function $g$. The function $k_{\alpha}$ defined by
\begin{equation}\label{IDSdefinition}
    k_{\alpha}(E) = \int \chi_{(-\infty,E]} ~dk_{\alpha}
\end{equation}
is called the \textit{integrated density of states}. For any irrational frequency $\alpha$, the support of the density of states measure $dk_{\alpha}$ is the spectrum $\Sigma(\alpha)$. 

The Lyapunov expoent and the density of states measure are related through the famous Thouless formula, given by
\begin{equation}\label{ThoulessFormula}
    L(\alpha,A^E) = \int \log |x - E|~dk_{\alpha}(x).
\end{equation}
The Thouless formula connects spectral theory with logarithmic potential theory, since the right-hand side of \eqref{ThoulessFormula} is nothing more than the negative of the logarithmic potential of $dk_{\alpha}$. Moreover, if the Lyapunov exponent is zero on the spectrum (no supercritical energies for analytic potentials), then the density of states measure coincides with the equilibrium measure of the spectrum, i.e. $dk_{\alpha} = \mu_{\Sigma(\alpha)}$. In this case, the logarithmic capacity of the spectrum is provided by a result of Simon, which was given for ergodic families of orthogonal polynomials on the real line \cite{Sim07}. Let us state Simon's result in our setting of quasi-periodic Schr\"{o}dinger operators.

\begin{theorem}\label{SimonCapacity}\emph{\textbf{(}\cite{Sim07}, \textit{Theorem 1.15}\textbf{)}}
     Let $\alpha$ be irrational, $dk_{\alpha}$, $\Sigma(\alpha)$ and $L(\alpha,A^E)$ be the density of states measure, the spectrum, and the Lyapunov exponent of $H_{\alpha,\theta}$, respectively. Also, let $\mu_{\Sigma(\alpha)}$ denote the equilibrium measure of $\Sigma(\alpha)$. Then the following statements are equivalent:
     \begin{itemize}
         \item[(a)] \quad $L(\alpha,A^E) = 0$ for $\mu_{\Sigma(\alpha)}$-a.e. $E$.
         
         \item[(b)] \quad $Cap\big(\Sigma(\alpha)\big) = 1$.
     \end{itemize}
\end{theorem}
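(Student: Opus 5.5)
The plan is to reduce both statements to the single identity
\[
\int L(\alpha,A^E)\,d\mu_{\Sigma(\alpha)}(E)=\log Cap\big(\Sigma(\alpha)\big),
\]
and then to use $L\ge 0$. The tools are the Thouless formula \eqref{ThoulessFormula}, Fubini, and Frostman's theorem for the equilibrium measure.

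\emph{Step 1: $dk_\alpha$ has finite energy.} By \eqref{ThoulessFormula}, $U^{dk_\alpha}(E)=-L(\alpha,A^E)$ for every $E\in\mathbb{C}$. Hence
\[
I(dk_\alpha)=\int U^{dk_\alpha}\,dk_\alpha=-\int L\,dk_\alpha .
\]
Since $L\ge 0$, we get $I(dk_\alpha)\le 0$. Since $L$ is subharmonic, it is upper semicontinuous and so bounded above on the compact set $\Sigma(\alpha)=\operatorname{supp} dk_\alpha$, which gives $I(dk_\alpha)>-\infty$. Thus $dk_\alpha$ has finite logarithmic energy, and therefore it does not charge polar sets. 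As a by-product, $Cap(\Sigma(\alpha))\ge e^{-I(dk_\alpha)}\ge 1$, so $\Sigma(\alpha)$ is non-polar and $\mu_{\Sigma(\alpha)}$ exists.

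\emph{Step 2: the key identity.} Integrate the Thouless formula against $\mu:=\mu_{\Sigma(\alpha)}$ and apply Fubini–Tonelli. The kernel $\log|x-E|$ is bounded above on the compact product $\Sigma(\alpha)\times\Sigma(\alpha)$, so Tonelli applies to its negative part after adding a constant. This gives
\[
\int L\,d\mu=-\int U^{dk_\alpha}\,d\mu=-\int U^{\mu}\,dk_\alpha .
\]
By Frostman's theorem, $U^{\mu}\le I(\mu)=-\log Cap(\Sigma(\alpha))$ everywhere, with equality quasi-everywhere on $\Sigma(\alpha)$. By Step 1, $dk_\alpha$ gives zero mass to the exceptional polar set, so $\int U^{\mu}\,dk_\alpha=-\log Cap(\Sigma(\alpha))$. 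This yields the identity displayed at the start.

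\emph{Step 3: conclusion.} If (a) holds, the left-hand side of the identity vanishes, so $Cap(\Sigma(\alpha))=1$, which is (b). Conversely, if (b) holds, then $\int L\,d\mu=0$ with $L\ge 0$, which forces $L=0$ for $\mu$-a.e. $E$, which is (a).

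The only delicate point is Step 2: justifying Fubini, and checking that the exceptional set in Frostman's theorem is negligible for $dk_\alpha$. Both are handled by the finite-energy observation in Step 1, which is why I would establish it first.
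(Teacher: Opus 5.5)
Your proof is correct: integrating the Thouless formula against $\mu_{\Sigma(\alpha)}$, applying Tonelli, and using Frostman's theorem gives $\int L\,d\mu_{\Sigma(\alpha)}=\log Cap(\Sigma(\alpha))$, and together with $L\ge 0$ this yields both implications; the finite energy of $dk_\alpha$ from Step 1 correctly handles the polar exceptional set. The paper gives no proof of its own and simply quotes the result from Simon \cite{Sim07}, and your Thouless--Fubini--Frostman argument is the standard potential-theoretic proof of that result.
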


\section{Convergence results for rational frequency approximates}\label{Literature}
In this section, we summarize the convergence theorems of the spectrum $\Sigma(\alpha)$ by $S_+(p_n/q_n)$ and the absolutely continuous spectrum $\Sigma_{ac}(\alpha)$ by $S_-(p_n/q_n)$ when the irrational $\alpha$ is approximated by the rational sequence $\{p_n/q_n\}_{n}$. Equivalently, these are the continuity results of $S_+(\alpha)$ and $S_-(\alpha)$ as irrational $\alpha$ is approximated by rational sequences.

We consider convergence results in the Hausdorff metric, the Lebesgue measure, and the logarithmic capacity. The next sections add more results to this list in terms of the logarithmic capacity, the harmonic measure, and the equilibrium measure. Let us start with the Hausdorff metric.

\subsection{Convergence in the Hausdorff metric}

If $K$ is a compact subset of the complex plane and $\Omega_K$ denotes the unbounded component of its complement $\overline{\mathbb{C}}\setminus K$, then the Hausdorff distance between sets $K$ and $F$ is given by
$$
d_H(K,F) = \inf_{\delta > 0} \big\{K \subset F_{\delta},~ F \subset K_{\delta}\big\},
$$
where $K_{\delta}$ denotes the $\delta$-neighborhood of $K$.

Hausdorff metric convergence for $S_+(p_n/q_n)$ and $\Sigma(\alpha)$ dates back to the 1980s and was proved by Avron and Simon \cite{AS83}. They obtained their result for discrete and continuous Schr\"{o}dinger operators with continuous potentials. Let us state their theorem in our case with our notation.

\begin{theorem}\label{AvronSimon}\emph{\textbf{(}\cite{AS83}, \textit{Theorem 3.7}\textbf{)}}
     Let $\alpha \in \mathbb{T}$ be irrational and $p_n/q_n \rightarrow \alpha$ with $(p_n,q_n) = 1$. If $H_{\alpha,\theta}$ is a quasi-periodic Schr\"{o}dinger operator with a continuous potential $v$, then
    \begin{equation}
        \lim_{p_n/q_n \rightarrow \alpha} d_H\Big(S_+(p_n/q_n),\Sigma(\alpha)\Big) = 0.
    \end{equation}
\end{theorem}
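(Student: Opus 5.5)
The plan is to prove the two halves of Hausdorff convergence separately. I will show that for every $\delta>0$ and all large $n$ we have $\Sigma(\alpha)\subset S_+(p_n/q_n)_\delta$ and $S_+(p_n/q_n)\subset \Sigma(\alpha)_\delta$. Both inclusions come from one mechanism: transplanting approximate eigenvectors with short support between the operators $H_{\alpha,\theta'}$ and $H_{p_n/q_n,\theta}$.

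The only input that uses continuity of $v$ is the following. On a window of length $N$ the two potentials can be made uniformly close by adjusting the phase. If $\psi$ is supported in $[m,m+N]$ and $\theta'=\theta+m(p/q-\alpha)$, then for $j$ in the window
$$\big|v(jp/q+\theta)-v(j\alpha+\theta')\big|\le \varpi\big(N|\alpha-p/q|\big),$$
where $\varpi$ is the modulus of continuity of $v$ on $\mathbb{T}$. Consequently
$$\big\|(H_{p/q,\theta}-H_{\alpha,\theta'})\psi\big\|\le \varpi\big(N|\alpha-p/q|\big)\,\|\psi\|.$$
This bound is uniform in $\theta$ and $\theta'$.

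\textbf{Localization lemma.} The key auxiliary statement is a Weyl-criterion result with controlled support length. Let $H=\Delta+V$ with $V$ bounded, let $E\in\sigma(H)$, and let $N\ge1$. Then there is a $\psi\neq0$ supported in an interval of length at most $2N$ with
$$\|(H-E)\psi\|\le (\varepsilon+C/N)\|\psi\|,$$
where $\varepsilon>0$ is arbitrary and $C$ is an absolute constant. To prove it, I take a Weyl vector $u$ with $\|(H-E)u\|\le\varepsilon\|u\|$. I then take a partition of unity $\sum_k\varphi_k^2\equiv1$ by translates of a tent function of width $2N$ with Lipschitz constant $O(1/N)$. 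The commutator $[\Delta,\varphi_k]$ has norm $O(1/N)$, and each site is covered by boundedly many $\varphi_k$. Combining these facts gives
$$\sum_k\|(H-E)\varphi_ku\|^2\le\big(2\varepsilon^2+C/N^2\big)\sum_k\|\varphi_ku\|^2 .$$
Hence some $k$ satisfies the inequality, and $\psi=\varphi_ku$ is the required vector.

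\textbf{Both inclusions.} Fix $\delta>0$. Choose $N$ with $C/N<\delta/3$, and then $n$ large enough that $\varpi(N|\alpha-p_n/q_n|)<\delta/3$. For the first inclusion, take $E\in\Sigma(\alpha)=\sigma(\alpha,\theta')$ for any $\theta'$. The lemma, applied with $\varepsilon=\delta/3$, gives a short-support $\psi$. With $\theta$ chosen accordingly, the window bound gives $\|(H_{p_n/q_n,\theta}-E)\psi\|<\delta\|\psi\|$. Since the operator is self-adjoint, $\operatorname{dist}(E,\sigma(p_n/q_n,\theta))<\delta$, so $E\in S_+(p_n/q_n)_\delta$. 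For the reverse inclusion, take $E\in\sigma(p_n/q_n,\theta)$ for some $\theta$ and apply the lemma to $H_{p_n/q_n,\theta}$. Choosing $\theta'=\theta+m(p_n/q_n-\alpha)$ then gives $\operatorname{dist}(E,\sigma(\alpha,\theta'))<\delta$. Because $\alpha$ is irrational, $\sigma(\alpha,\theta')=\Sigma(\alpha)$, and the bounds do not depend on $\theta$. Therefore $S_+(p_n/q_n)\subset\Sigma(\alpha)_\delta$.

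\textbf{Main obstacle.} The delicate point is the second inclusion. The naive approach would cut off a Floquet–Bloch solution of the periodic operator, which needs a window of length $N\gtrsim q_n$. That conflicts with the requirement $N|\alpha-p_n/q_n|\to0$ along an arbitrary sequence $p_n/q_n\to\alpha$, for which $q_n|\alpha-p_n/q_n|$ need not tend to $0$. The localization lemma removes this difficulty: its window length $N$ is independent of $q_n$, with error $O(1/N)$, so the two parameters decouple. I would therefore concentrate on a careful proof of that partition-of-unity estimate, including the bounded-overlap constant.
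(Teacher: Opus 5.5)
Your proof is correct. The paper gives no proof of this statement; it simply quotes it from \cite{AS83}. The only Hausdorff-convergence argument the paper carries out itself is the almost-periodic theorem in Section~\ref{Ergodic}, and comparing with it is instructive.

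\textbf{First inclusion.} Section~\ref{Ergodic} proceeds as you do: a finitely supported Weyl vector is placed where the two potentials agree.

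\textbf{Reverse inclusion.} Here Section~\ref{Ergodic} cuts off a Bloch solution of the $q_j$-periodic operator over $M_j$ periods. It pays the potential error $M_j\varepsilon_j\to0$, which is available only because the $q_j$ are almost periods. Transplanted to the quasi-periodic setting, this would require $M_n|q_n\alpha-p_n|\to0$. That fails for a general sequence $p_n/q_n\to\alpha$, which is precisely the obstruction you identify.

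\textbf{What your lemma buys.} Your IMS-type localization lemma gives approximate eigenvectors on windows of length $2N$, with error $O(\varepsilon+1/N)$ independent of both the potential and the period. This decouples the two scales, and it yields more than the statement:
\begin{enumerate}
\item The bounds are uniform in $E$, so you get Hausdorff convergence directly rather than through pointwise $\liminf/\limsup$ inclusions plus compactness.
\item You get a rate, $d_H\big(S_+(p_n/q_n),\Sigma(\alpha)\big)\lesssim\inf_N\big(1/N+\varpi(2N|\alpha-p_n/q_n|)\big)$. For Lipschitz $v$ this is the H\"older-$\tfrac12$ continuity in the frequency known from \cite{AMS90}.
\item Irrationality enters only through the phase-independence of $\sigma(\alpha,\theta')$. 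Replacing $\Sigma(\alpha)$ by $S_+(\alpha)$, the same argument gives $d_H\big(S_+(\alpha_k),S_+(\alpha)\big)\to0$ for every $\alpha$ and every $\alpha_k\to\alpha$, i.e.\ the full Theorem~\ref{CpotentialContinuity}.
\end{enumerate}
The Bloch-cutoff route, by contrast, is tied to the almost-period hypothesis.

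\textbf{Cosmetic corrections.}
\begin{enumerate}
\item Translates of a tent satisfy $\sum_k\chi_k\equiv1$, not $\sum_k\chi_k^2\equiv1$. Either use $\varphi(x)=\cos(\pi x/2N)$ on $[-N,N]$, or keep the tents and use $\tfrac12\le\sum_k\chi_k^2\le1$.
\item Your computation actually gives $\sqrt{2\varepsilon^2+C/N^2}\le\sqrt2\,\varepsilon+C'/N$, so take $\varepsilon=\delta/6$ rather than $\delta/3$.
\item The window has length $2N$, so the phase-matching error is $\varpi(2N|\alpha-p_n/q_n|)$.
\end{enumerate}
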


For $S_-(p_n/q_n)$ and $\Sigma_{ac}(\alpha)$, in general we do not have convergence in the Hausdorff metric, but partial results in set convergence. 

\subsection{Set convergence and convergence in the Lebesgue measure}

First, let us introduce some notation and definitions. Letting $\mathcal{B}$ denote the quotient space of the Borel subsets of the real line, and letting $\{B_n\}_n$ and $B$ be Borel subset of the real line, we define
\begin{equation}\label{setconvergence}
    \lim_{n \rightarrow \infty} B_n = B \qquad \text{in} \quad \mathcal{B}
\end{equation}
if and only if
\begin{equation}
    \limsup_{n \rightarrow \infty} B_n = \liminf_{n \rightarrow \infty} B_n = B
\end{equation}
if and only if
\begin{equation}
    \lim_{n \rightarrow \infty} \chi_{B_n} = \chi_{B} \qquad \text{Lebesgue a.e.,}
\end{equation}
where
\begin{equation*}
    \liminf_{n \rightarrow \infty} B_n := \bigcup_{n \in \mathbb{N}}\bigcap_{k \geq n} B_k \quad \text{and} \quad \limsup_{n \rightarrow \infty} B_n := \bigcap_{n \in \mathbb{N}}\bigcup_{k \geq n} B_k.
\end{equation*}

In 2011 Shamis obtained the following result as a corollary of continuity of the Lyapunov exponent for quasi-periodic Schr\"{o}dinger operators with analytic potentials, which is due to Bourgain and SJ \cite{BJ02}.

\begin{theorem}\label{Shamis1}\emph{\textbf{(}\cite{Sha11}, \textit{Theorem 1.1}\textbf{)}}
     Let $\alpha \in \mathbb{T}$ be irrational and $p_n/q_n \rightarrow \alpha$ with $(p_n,q_n) = 1$. If $H_{\alpha,\theta}$ is a quasi-periodic Schr\"{o}dinger operator with an analytic potential $v$, then
     \begin{equation}
        \limsup_{p_n/q_n \rightarrow \alpha} S_-(p_n/q_n) \subseteq \Sigma_{ac}(\alpha).
    \end{equation}
\end{theorem}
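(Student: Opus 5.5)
The plan is to pass from set inclusion to a statement about the Lyapunov exponent. The rational approximants enter through the vanishing of the rational Lyapunov exponent on $S_-(p/q)$. The irrational side is then handled by Kotani theory. The analytic input is the joint continuity of $(\beta,E)\mapsto L(\beta,A^E)$ at $(\alpha,E)$, $\alpha$ irrational and $E$ real, for analytic $v$ (Bourgain--Jitomirskaya \cite{BJ02}). Here the Lyapunov exponent at rational $\beta=p/q$ is understood in the sense of \eqref{LyapunovExponentDefinition}, which coincides with \eqref{LyapunovExponentRational}.

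\emph{Step 1 (rational side).} Fix $E\in S_-(p/q)$. Then $E\in\sigma(p/q,\theta)$ for every $\theta$, so by Proposition \ref{PropertiesofDiscriminant}, $|t_{p/q}(\theta,E)|\le 2$ for all $\theta\in\mathbb{T}$. The monodromy matrix $\prod_{j=q-1}^{0}A^E(jp/q+\theta)$ lies in $SL(2,\mathbb{R})$ and has trace in $[-2,2]$. Hence both of its eigenvalues lie on the unit circle, and its spectral radius equals $1$. Formula \eqref{LyapunovExponentRational} then gives $L(p/q,A^E)=0$.

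\emph{Step 2 (passage to the limit).} Let $E\in\limsup_n S_-(p_n/q_n)$. Then $E\in S_-(p_{n_k}/q_{n_k})$ along some subsequence, so $L(p_{n_k}/q_{n_k},A^E)=0$ for every $k$ by Step 1. Since $p_{n_k}/q_{n_k}\to\alpha$, continuity of $L$ in the frequency at the irrational point $\alpha$ yields $L(\alpha,A^E)=0$. Only continuity in $\beta$ at fixed real $E$ is needed here. We conclude that
\[
\limsup_{n\to\infty}S_-(p_n/q_n)\subseteq \mathcal{Z}:=\{E\in\mathbb{R} : L(\alpha,A^E)=0\}.
\]

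\emph{Step 3 (Kotani theory).} By the Ishii--Pastur--Kotani theorem, $\Sigma_{ac}(\alpha)$ is the essential closure of $\mathcal{Z}$. The essential closure of a Borel set contains that set up to a Lebesgue null set. Therefore $\mathcal{Z}\subseteq\Sigma_{ac}(\alpha)$ modulo a null set, and the inclusion holds in the quotient space $\mathcal{B}$ of \eqref{setconvergence}. That is precisely the sense in which the stated inclusion is meant.

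The main obstacle is Step 2: continuity of the Lyapunov exponent in the frequency at irrational $\alpha$, uniformly enough to handle the approximating rationals. This is false for general continuous potentials, and it is exactly where analyticity is used. I would import it from \cite{BJ02} rather than reprove it. The one point to verify is that their continuity statement refers to the same normalization of $L$ at rational frequencies. At rational frequencies the norm-based definition \eqref{LyapunovExponentDefinition} reduces to the spectral-radius formula \eqref{LyapunovExponentRational} by periodicity, so the two normalizations agree.
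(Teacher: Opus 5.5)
Your proposal is correct and follows the route the paper indicates for Shamis's result: you show $L(p/q,E)=0$ on $S_-(p/q)$, pass to the limit using the Bourgain--Jitomirskaya joint continuity of the Lyapunov exponent at irrational frequencies \cite{BJ02}, and then use Ishii--Pastur--Kotani to go from $\{E : L(\alpha,E)=0\}$ to $\Sigma_{ac}(\alpha)$. You are also right to read the inclusion modulo Lebesgue-null sets (in $\mathcal{B}$), and this reading is in fact forced: for the critical almost Mathieu operator $0\in S_-(p/q)$ for every odd $q$, so $0$ lies in the literal $\limsup$ along any sequence with infinitely many odd $q_n$, while $\Sigma_{ac}(\alpha)=\emptyset$.
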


In the other direction, Shamis proved a weaker result.

\begin{theorem}\label{Shamis2}\emph{\textbf{(}\cite{Sha11}, \textit{Theorem 1.2}\textbf{)}}
     Let $\alpha \in \mathbb{T}$ be irrational and $p_n/q_n \rightarrow \alpha$ with $(p_n,q_n) = 1$. Also, let $H_{\alpha,\theta}$ be a quasi-periodic Schr\"{o}dinger operator with an analytic potential $v$. For $\varepsilon > 0$, let us denote
     \begin{equation}
         S_-(p/q,\varepsilon) = \bigcap_{\theta \in \mathbb{T}} \bigg\{E~\Big|~\text{dist}\Big(E,\sigma(p/q,\theta)\Big) < \varepsilon \bigg\},
     \end{equation}
     where
     \begin{equation}
         \text{dist}(E,K) = \inf_{E' \in K} |E - E'|.
     \end{equation}
     Then
    \begin{equation}
        \Sigma_{ac}(\alpha) \subset \bigcap_{\varepsilon > 0} \liminf_{p_n/q_n \rightarrow \alpha} S_-(p_n/q_n,\varepsilon) 
    \end{equation}
\end{theorem}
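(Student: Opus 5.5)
We would argue by contradiction, combining the Green's function of a single periodic spectrum with continuity of the Lyapunov exponent in the frequency (Bourgain--Jitomirskaya \cite{BJ02}, valid for analytic $v$). Fix $E\in\Sigma_{ac}(\alpha)$ and $\varepsilon>0$, and suppose the conclusion fails. Then there are a subsequence (still denoted $p_n/q_n$) and phases $\theta_n$ such that $\sigma(p_n/q_n,\theta_n)\cap(E-\varepsilon,E+\varepsilon)=\emptyset$. By the Ishii--Pastur--Kotani theorem, $\Sigma_{ac}(\alpha)$ is the essential closure of $\{E' : L(\alpha,A^{E'})=0\}$. Hence we can pick $E'\in(E-\varepsilon/4,E+\varepsilon/4)$ with $L(\alpha,A^{E'})=0$.

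\emph{Step 1: spreading the gap to many phases.} Since $v$ is uniformly continuous, there is $\delta=\delta(\varepsilon)>0$, independent of $q$, such that $|\theta-\theta'|<\delta$ implies $\|H_{p/q,\theta}-H_{p/q,\theta'}\|<\varepsilon/2$. It follows that $d_H(\sigma(p/q,\theta),\sigma(p/q,\theta'))<\varepsilon/2$. Moreover, $\sigma(p/q,\theta)$ is $1/q$-periodic in $\theta$, because shifting $\theta$ by $1/q$ amounts to a translation of the lattice. Consequently the set $\Theta_n$ of phases $\theta$ for which $\sigma(p_n/q_n,\theta)$ misses $(E-\varepsilon/2,E+\varepsilon/2)$ contains the $q_n$ translates of a $\delta$-interval around $\theta_n$, so $|\Theta_n|\ge\min(1,2\delta)$.

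\emph{Step 2: a uniform lower bound for each such phase.} For a fixed phase, set $u_\theta(z)=\frac1q\log\rho(M_q(\theta,z))$, where $M_q$ is the monodromy matrix. We have $u_\theta(z)=\frac1q\log\bigl|\tfrac{t+\sqrt{t^2-4}}{2}\bigr|$ with $t=t_{p/q}(\theta,z)$. By Proposition~\ref{PropertiesofDiscriminant}, $\sigma(p/q,\theta)=t^{-1}([-2,2])$ with $t$ monic of degree $q$. Therefore $u_\theta$ is exactly the Green's function $G_{\sigma(p/q,\theta)}$, and these sets have capacity $1$. All spectra lie in a fixed interval $[-C,C]$ with $C=2+\|v\|_\infty$. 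Monotonicity of Green's functions (larger compact set, smaller Green's function) then gives, for $\theta\in\Theta_n$,
\[
u_\theta(E')\ \ge\ G_{K}(E')=:c(\varepsilon)>0,\qquad K=[-C,C]\setminus(E-\varepsilon/2,E+\varepsilon/2).
\]
Here $c(\varepsilon)>0$ because $E'$ lies in the open gap of $K$. Since $u_\theta\ge0$ everywhere, \eqref{LyapunovExponentRational} yields
\[
L(p_n/q_n,A^{E'})=\int_{\mathbb T}u_\theta(E')\,d\theta\ \ge\ \min(1,2\delta)\,c(\varepsilon)
\]
for every $n$ in the subsequence.

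\emph{Step 3: contradiction.} By \cite{BJ02}, for analytic $v$ the map $\beta\mapsto L(\beta,A^{E'})$ is continuous at irrational $\alpha$, including along rational approximants with $L(p/q,\cdot)$ given by \eqref{LyapunovExponentRational}. Thus $L(p_n/q_n,A^{E'})\to L(\alpha,A^{E'})=0$, which contradicts the uniform lower bound from Step 2. Since $\varepsilon$ was arbitrary, $E\in\liminf S_-(p_n/q_n,\varepsilon)$ for every $\varepsilon>0$.

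The main obstacle is Step~2: we must turn the absence of spectrum near $E$ at a \emph{single} phase into positivity of the \emph{phase-averaged} rational Lyapunov exponent. The identification of $u_\theta$ with a Green's function, together with the $q$-uniform phase continuity of Step~1, is what makes this work. The only place analyticity enters is the frequency-continuity of $L$ in Step 3.
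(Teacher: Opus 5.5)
Your argument is correct. The paper does not prove this statement; it quotes it from \cite{Sha11}. The only hint it gives is that the companion inclusion, Theorem~\ref{Shamis1}, is a corollary of the continuity of the Lyapunov exponent \cite{BJ02}, and your route is in that spirit.

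Your proof uses less than you claim. Step~3 only needs $\limsup_n L(p_n/q_n,A^{E'})\le L(\alpha,A^{E'})=0$, i.e.\ upper semicontinuity of $\beta\mapsto L(\beta,A^{E'})$. This holds for merely continuous $v$: by \eqref{LyapunovExponentDefinition}, $L(\beta,A^{E'})$ is an infimum of functions continuous in $\beta$, and at rational $\beta$ this infimum equals \eqref{LyapunovExponentRational}. The analytic content of \cite{BJ02} is lower semicontinuity, which is what the opposite inclusion needs. Likewise, only the Ishii--Pastur half of Kotani theory is used to produce $E'$.

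In fact your Step~1 already gives a shorter proof of a stronger statement. Set $\omega_v(t):=\sup_{|x-y|\le t}|v(x)-v(y)|$. The $1/q$-periodicity in $\theta$ and the $q$-uniform norm bound give $d_H\big(\sigma(p/q,\theta),\sigma(p/q,\theta')\big)\le\omega_v(1/(2q))$ for all $\theta,\theta'$. Hence $d_H\big(\sigma(p/q,\theta),S_+(p/q)\big)\le\omega_v(1/(2q))$ for every $\theta$; in particular your $\Theta_n$ is all of $\mathbb{T}$ once $1/(2q_n)<\delta$. Combining this with Theorem~\ref{AvronSimon} and $q_n\to\infty$ gives $\sup_{\theta}d_H\big(\sigma(p_n/q_n,\theta),\Sigma(\alpha)\big)\to0$. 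So every $E\in\Sigma(\alpha)$, not just $E\in\Sigma_{ac}(\alpha)$, lies in $S_-(p_n/q_n,\varepsilon)$ for all large $n$ and every $\varepsilon>0$. This holds for continuous $v$, with no Lyapunov exponents or Kotani theory.

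Your identification $G_{\sigma(p/q,\theta)}=\frac1q\log\rho\big(M_q(\theta,\cdot)\big)$ is neat and matches the capacity-one structure used in Section~\ref{supercritical}. But for the statement as quoted here, with a fixed fattening $\varepsilon$, the soft Hausdorff argument is both simpler and sharper. It also shows that this formulation does not distinguish $\Sigma_{ac}(\alpha)$ from $\Sigma(\alpha)$.
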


In 2012 SJ and Marx obtained convergence for both $S_+$ and $S_-$ according to \eqref{setconvergence} by restricting the approximation to a specific rational sequence converging to a given irrational $\alpha$.

\begin{theorem}\label{JitomirskayaMarx}\emph{\textbf{(}\cite{JM12}, \textit{Theorem 1.1}\textbf{)}}
     Let $\alpha \in \mathbb{T}$ be irrational and $H_{\alpha,\theta}$ be a quasi-periodic Schr\"{o}dinger operator with an analytic potential $v$. Then there is a sequence $p_n/q_n \rightarrow \alpha$ with $(p_n,q_n) = 1$ such that
     \begin{equation}
        \lim_{p_n/q_n \rightarrow \alpha} S_+(p_n/q_n) = \Sigma(\alpha)
    \end{equation}
    and
    \begin{equation}
        \lim_{p_n/q_n \rightarrow \alpha} S_-(p_n/q_n) = \Sigma_{ac}(\alpha).
    \end{equation}
\end{theorem}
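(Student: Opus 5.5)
The natural plan is to prove the two set-convergence statements separately along a subsequence of the continued fraction convergents of $\alpha$. For $S_-$ the tool is continuity of the Lyapunov exponent in the frequency at irrational $\alpha$ for analytic $v$ (Bourgain--Jitomirskaya \cite{BJ02}), together with Kotani theory. For $S_+$ the tools are Theorem \ref{AvronSimon} and the density of states. In both cases the final step is Borel--Cantelli. If along a sequence $p_n/q_n$ we have $\sum_n |B_n \triangle B| < \infty$, then a.e. point lies in $B_n \triangle B$ only finitely often, so $\chi_{B_n}\to\chi_B$ a.e. Hence it suffices to show $|S_\pm(p_n/q_n)\triangle \Sigma_{(ac)}(\alpha)|\to 0$ along some sequence $p_n/q_n\to\alpha$, and then pass to a fast subsequence.

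\textbf{Upper inclusions.} By Theorem \ref{AvronSimon}, $S_+(p_n/q_n)$ lies in the $\delta_n$-neighbourhood of $\Sigma(\alpha)$ with $\delta_n\to0$. Since $\Sigma(\alpha)$ is closed, $|S_+(p_n/q_n)\setminus \Sigma(\alpha)|\to 0$ provided the boundary of $\Sigma(\alpha)$ has measure zero. If the boundary has positive measure, I would instead intersect with Shamis' argument (Theorem \ref{Shamis1}) in the form of a measure estimate. For $S_-$, Theorem \ref{Shamis1} already gives $\limsup S_-(p_n/q_n)\subseteq \Sigma_{ac}(\alpha)$. Alternatively, one can argue directly: if $E\in S_-(p/q)$ then $L(p/q,E)=0$, so continuity of $L$ at $(\alpha,E)$ with $E\notin\Sigma_{ac}$ yields a contradiction for a.e. such $E$.

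\textbf{Lower inclusions.} Set $Z=\{E: L(\alpha,E)=0\}$, so that $\Sigma_{ac}(\alpha)=\overline{Z}^{ess}$ by Kotani theory. For $E\in Z$, continuity gives $L(p_n/q_n,E)\to 0$. Moreover, the continuity also holds for the phase-complexified cocycle $\theta\mapsto\theta+i\epsilon$ by Avila's global theory. Since $t_{p/q}(\theta,E)$ is a trigonometric polynomial in $\theta$, the smallness of the complexified periodic Lyapunov exponent then forces $\sup_\theta |t_{p_n/q_n}(\theta,E)|\le 2$ up to an exceptional set of energies whose measure tends to $0$. This gives $|Z\setminus S_-(p_n/q_n)|\to0$. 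For $S_+$ on the part of $\Sigma(\alpha)$ where $L>0$, I would use convergence of the integrated densities of states $k_{p_n/q_n}\to k_\alpha$, which follows from continuity of the rotation number. A gap of $S_+(p_n/q_n)$ is a region where every $k_{p_n/q_n}(\theta,\cdot)$ is locally constant. Combining this with the Avron--van Mouche--Simon measure lower bound for $|S_+|$, I would show $|\Sigma(\alpha)\setminus S_+(p_n/q_n)|\to0$.

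\textbf{Main obstacle.} I expect the hardest step to be the uniform-in-$\theta$ control in the lower inclusion for $S_-$, i.e.\ upgrading averaged smallness of $L(p/q,E)$ to $|t_{p/q}(\theta,E)|\le 2$ for \emph{all} phases. My first attempt would be to use the complex-phase Lyapunov exponent and the quantization of acceleration to bound the oscillation of $t$ in $\theta$ by $e^{o(q)}$ times a factor exponentially small in $q$. This is where the choice of a special sequence, namely convergents with $q_{n+1}$ large, should be needed.
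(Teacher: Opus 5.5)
The paper does not prove this statement---it is quoted from \cite{JM12}---so your sketch has to stand on its own. The Borel--Cantelli reduction and the upper inclusions are fine. For $S_+$ you need no hypothesis on $\partial\Sigma(\alpha)$: Hausdorff convergence and closedness of $\Sigma(\alpha)$ already give $\limsup S_+(p_n/q_n)\subseteq\Sigma(\alpha)$ pointwise. But neither lower inclusion, which is the actual content of the theorem, is proved.

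The part of your ``main obstacle'' that you plan to attack, exponentially small phase-oscillation, is the tractable one. The map $\theta\mapsto t_{p/q}(\theta,E)$ is analytic and $1/q$-periodic, so its Fourier modes lie in $q\mathbb{Z}$ (it is a trigonometric polynomial only when $v$ is). Uniform upper semicontinuity in the frequency of the phase-complexified exponent then gives $\sup_\theta|t_{p_n/q_n}(\theta,E)-\int t_{p_n/q_n}(\theta',E)\,d\theta'|=O(e^{-cq_n})$. This holds only at subcritical energies. At critical ones $L(\alpha,E,\epsilon)=2\pi\omega|\epsilon|$ with $\omega\ge1$, so you also need that critical energies are Lebesgue-null; this is an extra input (from Avila's global theory), not a consequence of continuity.

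The real gap comes after this. Small oscillation only shows that $E$ lies in both $S_\pm(p_n/q_n)$ or in neither, outside a thin layer around $|\int t\,d\theta|=2$. It does not show $|\int t\,d\theta|\le 2$. No Lyapunov-exponent information, real or complexified, can decide this. Indeed, $L(p_n/q_n,E)\to0$ is compatible with the averaged discriminant being as large as $e^{o(q_n)}$, i.e.\ with $E$ lying in a gap of every $\sigma(p_n/q_n,\theta)$. So ``up to an exceptional set of energies whose measure tends to $0$'' is the theorem itself. It needs a quantitative argument that, for most $E\in Z$, $E$ is not trapped in or exponentially close to a gap of the approximants. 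One possibility is a comparison of rotation numbers at scale $o(1/q_n)$, with $dk_\alpha$-measure converted to Lebesgue measure by the Deift--Simon lower bound on $dk_\alpha/dE$ on $\Sigma_{ac}(\alpha)$. The proposal contains no such argument.

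The lower inclusion for $S_+$ on $\Sigma(\alpha)\cap\{L>0\}$ is likewise unsupported:
\begin{itemize}
\item Continuity of the rotation number gives $\|k_{p_n/q_n}-k_\alpha\|_\infty\to0$, so each gap of $S_+(p_n/q_n)$ has small $dk_\alpha$-mass. But there may be $q_n-1$ gaps, so you would need $o(1/q_n)$.
\item Even then, small $dk_\alpha$-mass does not give small Lebesgue measure where $L>0$, since no Deift--Simon-type lower bound on $dk_\alpha/dE$ is available off $\Sigma_{ac}(\alpha)$.
\item The Avron--van Mouche--Simon measure bound is special to the almost Mathieu operator.
\item The elementary estimate $|\Sigma(\alpha)\setminus S_+(p_n/q_n)|\le 2(q_n+1)\,d_H\big(S_+(p_n/q_n),\Sigma(\alpha)\big)$ holds because each of the at most $q_n+1$ components of $\mathbb{R}\setminus S_+(p_n/q_n)$ meets $\Sigma(\alpha)$ only within $d_H$ of its endpoints. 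It would suffice if $d_H=o(1/q_n)$, but H\"older-$\tfrac12$ continuity in the frequency only yields $q_n d_H\lesssim (q_n/q_{n+1})^{1/2}$.
\end{itemize}
This is presumably why you expect to need convergents with $q_{n+1}$ large. That cannot be the mechanism. The statement is for every irrational $\alpha$, and for badly approximable $\alpha$ one has $q^2|\alpha-p/q|\ge c>0$ for all rationals. So an argument requiring $q_{n+1}/q_n\to\infty$ cannot prove the theorem as stated; the freedom in choosing the sequence should be needed only for the Borel--Cantelli thinning.
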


The convergence in the Lebesgue measure follows as a corollary of this result.

\begin{theorem}\label{JitomirskayaMarx}\emph{\textbf{(}\cite{JM12}, \textit{Corollary 1.1}\textbf{)}}
     Let $\alpha \in \mathbb{T}$ be irrational and $H_{\alpha,\theta}$ be a quasi-periodic Schr\"{o}dinger operator with an analytic potential $v$. Then there is a sequence $p_n/q_n \rightarrow \alpha$ with $(p_n,q_n) = 1$ such that
    \begin{equation}
        \lim_{p_n/q_n \rightarrow \alpha} \big|S_+(p_n/q_n)\big| = \big|\Sigma(\alpha)\big|
    \end{equation}
    and
    \begin{equation}
        \lim_{p_n/q_n \rightarrow \alpha} \big|S_-(p_n/q_n)\big| = \big|\Sigma_{ac}(\alpha)\big|.
    \end{equation}
\end{theorem}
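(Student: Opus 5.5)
The plan is to deduce this directly from the set convergence in the preceding theorem (\cite{JM12}, Theorem 1.1), using the dominated convergence theorem. I take the sequence $p_n/q_n \rightarrow \alpha$ supplied by that theorem. Along it, $S_+(p_n/q_n) \rightarrow \Sigma(\alpha)$ and $S_-(p_n/q_n) \rightarrow \Sigma_{ac}(\alpha)$ in the sense of \eqref{setconvergence}. By definition this means
\begin{equation*}
\chi_{S_+(p_n/q_n)} \rightarrow \chi_{\Sigma(\alpha)} \quad \text{and} \quad \chi_{S_-(p_n/q_n)} \rightarrow \chi_{\Sigma_{ac}(\alpha)}
\end{equation*}
pointwise Lebesgue almost everywhere on $\mathbb{R}$.

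Next I produce a uniform integrable majorant. For every frequency and phase, the operator $H_{\alpha,\theta}$ is self-adjoint with $\|H_{\alpha,\theta}\| \leq 2 + \|v\|_\infty$. Hence every $\sigma(p_n/q_n,\theta)$ and $\Sigma(\alpha)$ lie in the fixed interval $I := [-2-\|v\|_\infty,\, 2+\|v\|_\infty]$. The same holds for $\sigma_{ac}(p_n/q_n,\theta)$ and $\Sigma_{ac}(\alpha)$, since these are subsets of the corresponding spectra. Consequently $S_\pm(p_n/q_n)$ are contained in $I$, and all the characteristic functions above are bounded by $\chi_I \in L^1(\mathbb{R})$.

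I also need measurability. $S_+(p/q)$ is compact: it is the image of the compact set $\{(\theta,E) : |t_{p/q}(\theta,E)| \leq 2\}$ under projection, because the discriminant is continuous in $(\theta,E)$ when $v$ is continuous. For $S_-(p/q)$, note that for periodic operators $\sigma_{ac}(p/q,\theta) = \sigma(p/q,\theta)$ by Proposition \ref{PropertiesofDiscriminant}. So $S_-(p/q)$ is an intersection of closed sets, hence closed. The limit sets $\Sigma(\alpha)$ and $\Sigma_{ac}(\alpha)$ are closed as well.

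Dominated convergence then gives
\begin{equation*}
|S_\pm(p_n/q_n)| = \int \chi_{S_\pm(p_n/q_n)}\,dx \rightarrow \int \chi\,dx,
\end{equation*}
where $\chi = \chi_{\Sigma(\alpha)}$ or $\chi_{\Sigma_{ac}(\alpha)}$ respectively. This is the claim.

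The only step needing any care is confirming measurability and the uniform bound, and both are routine. All of the real difficulty is contained in the cited set-convergence theorem, which is assumed here.
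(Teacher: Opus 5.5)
Your proof is correct and takes the same route as the paper. The paper simply records this as an immediate corollary of the set-convergence theorem of \cite{JM12}: along the sequence supplied there, the indicator functions of uniformly bounded sets converge Lebesgue a.e., and dominated convergence then yields convergence of the Lebesgue measures. Your measurability and uniform-bound checks fill in exactly the routine details that the paper leaves implicit.
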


\subsection{Convergence in logarithmic capacity}

For logarithmic capacity, our first result in \cite{HJ24} showed convergence of $S_+$ in subcritical and critical cases for analytic potentials. This is equivalently the case where the Lyapunov exponent is zero everywhere in the spectrum for analytic quasi-periodic Schr\"{o}dinger operators.

\begin{theorem}\label{Schroedingerresultsubcritical}\emph{\textbf{(}\cite{HJ24}, \textit{Theorem 4.6}\textbf{)}}
 Let $\alpha \in \mathbb{T}$ be irrational and $p_n/q_n \rightarrow \alpha$ with $(p_n,q_n) = 1$. Also, let $H_{\alpha,\theta}$ be a quasi-periodic Schr\"{o}dinger operator with an analytic potential $v$. If $L(\alpha,E) = 0$ for $E \in \Sigma(\alpha)$, then
    \begin{equation}
        \lim_{p_n/q_n \rightarrow \alpha} Cap\big(S_+(p_n/q_n)\big) = Cap\big(\Sigma(\alpha)\big).
    \end{equation}
\end{theorem}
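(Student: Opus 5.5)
The plan is a squeeze: show $\liminf_n Cap(S_+(p_n/q_n))\geq 1$ and $\limsup_n Cap(S_+(p_n/q_n))\leq Cap(\Sigma(\alpha))$, and identify $Cap(\Sigma(\alpha))=1$ from the zero Lyapunov exponent hypothesis. This route appears to use only continuity of $v$, Hausdorff convergence, and the rational band structure; analyticity does not seem to be needed.

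\emph{Step 1: the limit value.} The hypothesis $L(\alpha,E)=0$ for all $E\in\Sigma(\alpha)$ gives $L=0$ $\mu_{\Sigma(\alpha)}$-a.e., since $\mu_{\Sigma(\alpha)}$ is supported on $\Sigma(\alpha)$. By Theorem \ref{SimonCapacity}, $Cap(\Sigma(\alpha))=1$.

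\emph{Step 2: lower bound, exact for every rational.} Fix $p/q$ and $\theta$. By Proposition \ref{PropertiesofDiscriminant}, $\sigma(p/q,\theta)=t_{p/q}(\theta,\cdot)^{-1}([-2,2])$, where $t_{p/q}(\theta,\cdot)$ is a monic polynomial of degree $q$. For a monic polynomial $P$ of degree $q$ and a compact $K$, the standard polynomial preimage formula gives
\[
Cap(P^{-1}(K))=Cap(K)^{1/q}.
\]
This follows from $G_{P^{-1}(K)}=\frac1q G_K\circ P$, comparing the constant terms at $\infty$. Since $Cap([-2,2])=1$, we get $Cap(\sigma(p/q,\theta))=1$. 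Alternatively, Theorem \ref{TotikPeherstorfer}(c) applies here, because $t_{p/q}$ alternates with norm $2$ on this set, so it is the Chebyshev polynomial and has Widom factor $2$, which gives $2=2\,Cap^q$. Monotonicity of capacity then yields $Cap(S_+(p/q))\geq 1$ for every $p/q$.

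\emph{Step 3: upper bound via Hausdorff convergence.} This is the step that needs care. For $\delta>0$, let $\overline{\Sigma}_\delta$ be the closed $\delta$-neighborhood of $\Sigma(\alpha)$ in $\mathbb{C}$. By Theorem \ref{AvronSimon}, for every $\delta$ there is $N$ such that $S_+(p_n/q_n)\subset\overline{\Sigma}_\delta$ for all $n\geq N$. Hence
\[
\limsup_n Cap(S_+(p_n/q_n))\leq Cap(\overline{\Sigma}_\delta).
\]
The sets $\overline{\Sigma}_{1/k}$ are compact and decreasing, with intersection $\Sigma(\alpha)$ because $\Sigma(\alpha)$ is closed. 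By \eqref{CapCompactConvergence}, $Cap(\overline{\Sigma}_{1/k})\to Cap(\Sigma(\alpha))=1$. The only point to check is that closed rather than open neighborhoods are used, so that the continuity of capacity under decreasing compact intersections applies.

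Combining Steps 1 through 3 gives $\lim_n Cap(S_+(p_n/q_n))=1=Cap(\Sigma(\alpha))$ along every sequence $p_n/q_n\to\alpha$.
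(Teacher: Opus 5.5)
Your proposal is correct. It is essentially the argument the paper gives for Theorem \ref{Schroedingerresultirregular}, which contains this statement as the special case of analytic $v$:
\begin{itemize}
\item $Cap(\Sigma(\alpha))=1$ comes from Theorem \ref{SimonCapacity}.
\item $Cap(\sigma(p/q,\theta))=1$ comes from the discriminant being the degree-$q$ Chebyshev polynomial with norm $2$. Your polynomial-preimage formula is an equivalent route, implicitly using that property (3) of Proposition \ref{PropertiesofDiscriminant} forces $t_{p/q}(\theta,\cdot)^{-1}([-2,2])\subset\mathbb{R}$.
\item The upper bound comes from Avron--Simon plus continuity of capacity on decreasing closed neighborhoods.
\end{itemize}
This differs from the original proof in \cite{HJ24}, which used Avila's global theory and Chebyshev polynomials, but it is exactly the potential-theoretic route the present paper adopts to drop analyticity. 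Your fixed-$\delta$ version of Step 3 also avoids the paper's ``without loss of generality $\varepsilon_n$ is non-increasing'' reduction.
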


In the special case of the almost Mathieu operator, we also obtained convergence in the supercritical case \cite{HJ24}. 

\begin{theorem}\label{AMOresult1}\emph{\textbf{(}\cite{HJ24}, \textit{Theorem 3.1}\textbf{)}}
    Let $H_{\alpha,\theta}$ be the almost Mathieu operator (AMO) with potential $v(x) = 2\lambda \cos(2\pi x)$, and let $\alpha \in \mathbb{T}$ be irrational such that $p_n/q_n \rightarrow \alpha$ and $(p_n,q_n) = 1$. Then 
    \begin{equation}
        \lim_{p_n/q_n \rightarrow \alpha} Cap\big(S_+(p_n/q_n)\big) = Cap\big(\Sigma(\alpha)\big).
    \end{equation}
\end{theorem}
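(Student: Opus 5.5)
The plan is to compute both capacities explicitly, using two special features of the almost Mathieu operator: Chambers' formula on the rational side and the exact Lyapunov exponent formula on the irrational side. We then check that $Cap\big(S_+(p_n/q_n)\big)$ equals an explicit quantity depending only on $\lambda$ and $q_n$, which tends to $\max(1,|\lambda|)$, and that $Cap\big(\Sigma(\alpha)\big)=\max(1,|\lambda|)$. Replacing $\theta$ by $\theta+1/2$ if necessary, we may assume $\lambda\ge 0$.

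\emph{Rational side.} By Chambers' formula, the discriminant of the almost Mathieu operator splits as
\begin{equation*}
t_{p/q}(\theta,E)=\Delta_{p/q}(E)-2\lambda^{q}\cos(2\pi q\theta),
\end{equation*}
where $\Delta_{p/q}$ is a monic polynomial of degree $q$ independent of $\theta$ (Proposition \ref{PropertiesofDiscriminant}). Taking the union over $\theta$ of $t_{p/q}(\theta,\cdot)^{-1}([-2,2])$ then gives
\begin{equation*}
S_+(p/q)=\Delta_{p/q}^{-1}\big([-2-2\lambda^q,\,2+2\lambda^q]\big).
\end{equation*}
For a monic polynomial $P$ of degree $q$ and a compact $K$, the standard identity is $Cap(P^{-1}(K))=Cap(K)^{1/q}$. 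It can be obtained by noting that $G_K(P(z))/q$ is the Green's function of $P^{-1}(K)$ and comparing the constant terms at $\infty$. Alternatively, when $\Delta_{p/q}$ attains $\pm(2+2\lambda^q)$ at its extrema, this is exactly the situation of Theorem \ref{TotikPeherstorfer}(c),(a). Since $Cap([-c,c])=c/2$, we get
\begin{equation*}
Cap\big(S_+(p/q)\big)=(1+\lambda^{q})^{1/q}\longrightarrow \max(1,\lambda)\qquad (q\to\infty).
\end{equation*}
Because $p_n/q_n\to\alpha$ irrational forces $q_n\to\infty$, this settles the rational side for every sequence.

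\emph{Irrational side.} For $\lambda\le 1$, it is known (Bourgain--Jitomirskaya) that $L(\alpha,E)=0$ on $\Sigma(\alpha)$, so Theorem \ref{SimonCapacity} gives $Cap(\Sigma(\alpha))=1$. For $\lambda>1$ we use $L(\alpha,E)=\log\lambda$ on $\Sigma(\alpha)$ together with the Thouless formula \eqref{ThoulessFormula}. These give $U^{dk_\alpha}\equiv-\log\lambda$ on $\operatorname{supp} dk_\alpha=\Sigma(\alpha)$. A probability measure on $\Sigma(\alpha)$ whose potential is constant on its support is the equilibrium measure, by uniqueness via Frostman/energy minimization. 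Hence $I(\mu_{\Sigma(\alpha)})=-\log\lambda$ and $Cap(\Sigma(\alpha))=\lambda$. An alternative route is Aubry duality, $\Sigma_\lambda=\lambda\Sigma_{1/\lambda}$, combined with the scaling property of capacity. Combining the two sides yields the theorem.

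\emph{Main obstacle.} The delicate input is the exact value of the Lyapunov exponent on the spectrum in the supercritical regime, or equivalently the use of duality to reduce to $\lambda\le1$. We import this from the literature rather than reprove it. A second point needing care is that Chambers' formula, with the normalization of $A^E$ in \eqref{A^Edef}, produces exactly the constant $2\lambda^q$. This can be verified by extracting the $\theta$-dependence of the trace of the product of the $q$ transfer matrices: only the product of the diagonal potential terms contributes a nonconstant Fourier mode in $\theta$.
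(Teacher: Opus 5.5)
Your proposal is correct and follows essentially the same route as the paper, which takes this result from \cite{HJ24} and records the two explicit formulas it rests on in Section~\ref{Counterexamples}: Chambers' formula gives $Cap\big(S_+(p/q)\big)=(1+|\lambda|^{q})^{1/q}$ (via the Chebyshev/Totik--Peherstorfer computation, or your polynomial-preimage identity), and this tends to $\max(1,|\lambda|)=Cap\big(\Sigma(\alpha)\big)$. One small point should be stated explicitly: the complex preimage $\Delta_{p/q}^{-1}\big([-2-2\lambda^{q},2+2\lambda^{q}]\big)$ is contained in $\mathbb{R}$, which follows from Proposition~\ref{PropertiesofDiscriminant}(2)--(3) applied at each $\theta$, so the preimage capacity formula really computes $Cap\big(S_+(p/q)\big)$.
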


\begin{theorem}\label{AMOresult2}\emph{\textbf{(}\cite{HJ26-2}\textbf{)}}
    Let $H_{\alpha,\theta}$ be the almost Mathieu operator (AMO) with potential $v(x) = 2\lambda \cos(2\pi x)$ and let $\alpha \in \mathbb{T}$ be irrational such that $p_n/q_n \rightarrow \alpha$ and $(p_n,q_n) = 1$. Then 
    \begin{equation}
        \lim_{p_n/q_n \rightarrow \alpha} Cap\big(S_-(p_n/q_n)\big) = Cap\big(\Sigma_{ac}(\alpha)\big).
    \end{equation}
\end{theorem}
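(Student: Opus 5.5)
The plan is to reduce everything to an explicit description of $S_-(p/q)$ through Chambers' formula for the almost Mathieu discriminant. Then the polynomial preimage rule for capacity gives $Cap\big(S_-(p/q)\big)$ in closed form. The limit is compared with $Cap\big(\Sigma_{ac}(\alpha)\big)$, which is either $1$ or $0$ according to the coupling regime. By the symmetry $\lambda\mapsto-\lambda$ (the shift $\theta\mapsto\theta+1/2$, together with unitary equivalence) we may assume $\lambda\ge 0$. Since $p_n/q_n\to\alpha$ with $\alpha$ irrational, we also have $q_n\to\infty$.

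\emph{Step 1 (Chambers' formula).} For $v(x)=2\lambda\cos(2\pi x)$ and $(p,q)=1$, the discriminant splits as
\begin{equation*}
t_{p/q}(\theta,E)=\Delta_{p/q}(E)+2\lambda^q\cos(2\pi q\theta),
\end{equation*}
where $\Delta_{p/q}$ is a monic polynomial of degree $q$ that does not depend on $\theta$. I would quote this, or reprove it by expanding the trace of the transfer matrix product and checking that $\theta$ enters only through the product of the off-diagonal terms of the dual matrix. By Proposition~\ref{PropertiesofDiscriminant} we have $\sigma(p/q,\theta)=\sigma_{ac}(p/q,\theta)=\{E\in\mathbb{R}: |\Delta_{p/q}(E)+2\lambda^q\cos(2\pi q\theta)|\le 2\}$. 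Intersecting over $\theta$ and letting $c:=2\lambda^q\cos(2\pi q\theta)$ range over $[-2\lambda^q,2\lambda^q]$ then gives
\begin{equation*}
S_-(p/q)=\Delta_{p/q}^{-1}\big([-(2-2\lambda^q),\,2-2\lambda^q]\big)
\end{equation*}
when $\lambda<1$. When $\lambda=1$ it gives the finite set $\Delta_{p/q}^{-1}(\{0\})$, and when $\lambda>1$ it gives $\emptyset$. Here I take $\Delta_{p/q}^{-1}$ to mean the preimage in $\mathbb{C}$, which is real by item (2)--(3) of Proposition~\ref{PropertiesofDiscriminant}.

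\emph{Step 2 (capacity of $S_-$).} For a monic polynomial $P$ of degree $q$ and a compact set $K$, we have $Cap(P^{-1}(K))=Cap(K)^{1/q}$. This follows, for instance, from the Green's function identity $G_{P^{-1}(K)}=\tfrac1q G_K\circ P$ and a comparison of the constants at $\infty$. With $Cap([-a,a])=a/2$ this yields $Cap\big(S_-(p/q)\big)=(1-\lambda^q)^{1/q}$ for $0\le\lambda<1$, and $Cap\big(S_-(p/q)\big)=0$ for $\lambda\ge1$, since the set is finite or empty. As $q_n\to\infty$, $(1-\lambda^{q_n})^{1/q_n}\to 1$ for $\lambda<1$.

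\emph{Step 3 (the limit set).} For $0\le\lambda<1$, the almost Mathieu operator has purely absolutely continuous spectrum for every irrational $\alpha$ (Avila's resolution of the subcritical case), so $\Sigma_{ac}(\alpha)=\Sigma(\alpha)$. The Lyapunov exponent $L(\alpha,E)=\max(0,\log\lambda)$ vanishes on $\Sigma(\alpha)$ (Bourgain--Jitomirskaya), so Theorem~\ref{SimonCapacity} gives $Cap\big(\Sigma_{ac}(\alpha)\big)=1$, matching Step 2. For $\lambda>1$, positivity of the Lyapunov exponent excludes absolutely continuous spectrum by Ishii--Pastur--Kotani theory. For $\lambda=1$, the spectrum has zero Lebesgue measure, so $\Sigma_{ac}(\alpha)=\emptyset$. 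In both cases the capacity is $0$, which again matches.

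I expect the main obstacle to be Step 1, specifically the identification of the phase-intersection $\bigcap_\theta\sigma_{ac}(p/q,\theta)$ as exactly a polynomial preimage of an interval. This needs the precise form of Chambers' formula, with the $\theta$-dependence confined to the additive term $2\lambda^q\cos(2\pi q\theta)$. It also needs a check that the extreme phases, where $\cos(2\pi q\theta)=\pm1$, are actually attained, so that the intersection is closed and equal to the stated preimage. Everything after that is the standard polynomial preimage capacity formula together with known spectral facts about the almost Mathieu operator.
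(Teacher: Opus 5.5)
Your argument is correct and follows the route the paper points to. The paper does not prove this theorem itself (it is quoted from \cite{HJ26-2}), but Section 8 records the same closed form $Cap(S_-(p/q))=(1-|\lambda|^{q})^{1/q}$ for $|\lambda|<1$, and the degeneration of $S_-(p/q)$ to a finite or empty set for $|\lambda|\ge 1$ matched against the absence of absolutely continuous spectrum, which is what your Chambers-formula and polynomial-preimage computation produces. The step you flag as the main obstacle is harmless: $\cos(2\pi q\theta)$ sweeps all of $[-1,1]$ as $\theta$ ranges over $\mathbb{T}$, and any sign or additive constant in the convention for Chambers' formula is absorbed into $\Delta_{p/q}$ or into the symmetric range of $c$.
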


\section{Convergence of capacities for continuous potentials}\label{supercritical}

In our proof of Theorem \ref{Schroedingerresultsubcritical} we used the classical theory of Chebyshev polynomials and Avila's global theory of discrete Schr\"{o}dinger operators with analytic potentials. However, for $H_{\alpha,\theta}$ with continuous potentials we will use a different approach, which is more potential theoretic. The continuity in Hausdorff metric for continuous potentials is a result of Avron and Simon \cite{AS83}. We state their result in our notation.

\begin{theorem}\label{CpotentialContinuity}\emph{\textbf{(}\cite{AS83}, \textit{Theorem 3.7 and Corollary 3.8}\textbf{)}}
    Let $v$ be a continuous function on the unit circle. Let $\sigma(\alpha,\theta)$ be the spectrum of 
    \begin{equation}
    \big(H_{\alpha,\theta}\psi\big)_n = \psi_{n-1} + \psi_{n+1} + v(2\pi\alpha n + \theta)\psi_n,
\end{equation}
and let 
\begin{equation*}
    S_+(\alpha) = \bigcup_{\theta \in \mathbb{T}} \sigma(\alpha,\theta).
\end{equation*}
Then if $\alpha_k \rightarrow \alpha$, we have $E \in S_+(\alpha)$ if and only if there exists $E_k \in S_+(\alpha_k)$ with $E_k \rightarrow E$. Moreover $$\big\{(E,\alpha) ~|~ E \in S_+(\alpha)\big\}$$ is a closed set.
\end{theorem}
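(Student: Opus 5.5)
The plan is to compare the operators at nearby frequencies and phases through \emph{localized approximate eigenvectors} (Weyl vectors). Only the continuity of $v$ and the uniform boundedness of the off-diagonal part of $H_{\alpha,\theta}$ should matter. The basic observation is this: if $\psi$ is supported in an interval $\Lambda$ of length $L$ centred at $0$, then
\[
\big\|(H_{\alpha',\theta'}-H_{\alpha,\theta})\psi\big\| \le \varpi_v\big(L|\alpha'-\alpha| + |\theta'-\theta|\big)\,\|\psi\|,
\]
where $\varpi_v$ is the modulus of continuity of $v$ on $\mathbb{T}$. Since $\operatorname{dist}(E,\sigma(H))\le \|(H-E)\psi\|/\|\psi\|$ for self-adjoint $H$, any good approximate eigenvector of bounded support for one operator is also one for every nearby operator.

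\textbf{Step 1 (localization lemma).} I will show there is $C$ (independent of $v$, $\alpha$, $\theta$) with the following property. If $E\in\sigma(\alpha,\theta)$ and $L\in\mathbb{N}$, there exist an interval $\Lambda$ of length $2L$ and a unit vector $\varphi$ supported in $\Lambda$ with $\|(H_{\alpha,\theta}-E)\varphi\|\le C/L$.
\begin{itemize}
\item Start from a finitely supported unit $\psi$ with $\|(H-E)\psi\|\le 1/L$.
\item Take a partition of unity $\sum_j\chi_j^2=1$ by piecewise-linear bumps $\chi_j$ supported in $[jL-L,jL+L]$, each with Lipschitz constant $\lesssim 1/L$.
\item Since $[H,\chi_j]$ involves only the Laplacian part, $\|[H,\chi_j]\psi\|\lesssim L^{-1}\|\psi\,\mathbf{1}_{\operatorname{supp}\chi_j}\|$.
\item By bounded overlap this gives $\sum_j\|(H-E)\chi_j\psi\|^2\le (C/L)^2\sum_j\|\chi_j\psi\|^2$.
\item Pigeonholing produces some $j$ with $\|(H-E)\chi_j\psi\|\le (C/L)\|\chi_j\psi\|\ne0$.
\end{itemize}
Translating by $jL$ and replacing $\theta$ by $\theta+jL\alpha$, which does not leave the family $\{H_{\alpha,\theta}\}_\theta$, I may assume $\Lambda$ is centred at $0$.

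\textbf{Step 2 (both inclusions).} First suppose $E\in S_+(\alpha)$, say $E\in\sigma(\alpha,\theta)$.
\begin{itemize}
\item Fix $L$ and take the centred $\varphi$ from Step 1 for some phase $\theta'$.
\item By the basic observation, $\operatorname{dist}(E,\sigma(\alpha_k,\theta'))\le C/L+\varpi_v(L|\alpha_k-\alpha|)$.
\item Choosing $L=L_k\to\infty$ slowly enough that $L_k|\alpha_k-\alpha|\to0$ yields $E_k\in S_+(\alpha_k)$ with $E_k\to E$.
\end{itemize}
Conversely, suppose $E_k\in\sigma(\alpha_k,\theta_k)$ and $E_k\to E$.
\begin{itemize}
\item Applying Step 1 to $H_{\alpha_k,\theta_k}$ gives centred vectors $\varphi_k$ for phases $\theta_k'$.
\item Then $\operatorname{dist}(E,\sigma(\alpha,\theta_k'))\le |E-E_k|+C/L_k+\varpi_v(L_k|\alpha_k-\alpha|)\to0$.
\item Hence $E$ lies in the closure of $S_+(\alpha)$.
\end{itemize}

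\textbf{Step 3 (closedness).} $S_+(\alpha)$ is itself closed for each fixed $\alpha$. Indeed, by the same observation with $L$ fixed, $\theta\mapsto\sigma(\alpha,\theta)$ is continuous in the Hausdorff sense. A limit of points $E_j\in\sigma(\alpha,\theta_j)$ can be handled by passing to a convergent subsequence of $\theta_j$ in the compact $\mathbb{T}$. So the limit point obtained in Step 2 lies in $S_+(\alpha)$, which proves the equivalence. Closedness of $\{(E,\alpha): E\in S_+(\alpha)\}$ is then exactly the converse direction: if $(E_k,\alpha_k)\to(E,\alpha)$ with $E_k\in S_+(\alpha_k)$, then $E\in S_+(\alpha)$.

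\textbf{Main obstacle.} I expect Step 1 to be the main obstacle. A raw Weyl vector may have support much longer than $|\alpha_k-\alpha|^{-1}$, in which case the phase drift $n(\alpha_k-\alpha)$ destroys the comparison between the two operators. The partition-of-unity localization with an error of order $1/L$ that is uniform in the potential is what decouples the support length from $|\alpha_k-\alpha|$. I would carry out the commutator estimate carefully, including the cross terms from overlapping supports of the $\chi_j$.
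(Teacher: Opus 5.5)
Your argument is correct. The paper gives no proof of this statement. It is imported from Avron--Simon and used only through its consequence, the Hausdorff convergence of $S_+(p_n/q_n)$ to $\Sigma(\alpha)$ (and the multi-frequency analogue in Section 6). So there is no in-paper route to compare with; you supply a self-contained proof via the standard IMS-type localization lemma.

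It is worth noting where that lemma is really needed. The forward implication already follows from strong convergence $H_{\alpha_k,\theta}\to H_{\alpha,\theta}$: test a fixed finitely supported Weyl vector. The converse, closedness of the graph, is an upper-semicontinuity statement that strong convergence alone cannot give. There your combination is exactly what is required: translation handles Weyl vectors drifting to infinity, and localization handles Weyl vectors spread over lengths $\gg|\alpha_k-\alpha|^{-1}$.

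Your error $C/L+\varpi_v(2\pi L|\alpha_k-\alpha|)$ is uniform in $E$, so you in fact get $d_H\big(S_+(\alpha_k),S_+(\alpha)\big)\le \inf_{L}\big\{C/L+\varpi_v(2\pi L|\alpha_k-\alpha|)\big\}$. This is the Hausdorff statement the paper actually uses, now with a rate: $\lesssim|\alpha_k-\alpha|^{1/2}$ for Lipschitz $v$, and $\lesssim|\alpha_k-\alpha|^{\beta/(1+\beta)}$ for $\beta$-H\"older $v$. The proof also transfers verbatim to $v\in\mathcal{C}(\mathbb{T}^N)$. For $|n|\le L$ one has $|v(\vec\theta+n\vec\alpha_k)-v(\vec\theta+n\vec\alpha)|\le\varpi_v(L|\vec\alpha_k-\vec\alpha|)$, and phase translations stay in the family.

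A few small repairs:
\begin{enumerate}
\item Piecewise-linear $\chi_j$ cannot satisfy $\sum_j\chi_j^2=1$. Take instead $\chi_j(x)=\cos\big(\pi(x-jL)/(2L)\big)$ for $|x-jL|\le L$ and $0$ otherwise, so that $\chi_j^2+\chi_{j+1}^2=1$ on $[jL,(j+1)L]$. Alternatively, normalize tents by $(\sum_k\eta_k^2)^{1/2}$.
\item The commutator $[H,\chi_j]\psi$ involves $\psi$ on $\operatorname{supp}\chi_j$ enlarged by one site. This leaves the bounded-overlap count intact.
\item In the statement's normalization, the modulus of continuity should be evaluated at $2\pi L|\alpha'-\alpha|+|\theta'-\theta|$.
\item Step 3 is more simply the norm bound $\|H_{\alpha,\theta}-H_{\alpha,\theta'}\|\le\varpi_v(|\theta-\theta'|)$. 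This holds because the phase shift is the same at every site.
\item Your translation step uses that the phase ranges over the whole circle carrying $v$. The statement mixes $2\pi\alpha n$ with $\theta\in\mathbb{T}$, and it should be read that way, as you implicitly do.
\end{enumerate}
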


First we prove our capacity convergence result in the case of regular $\Sigma(\alpha)$.

\begin{theorem}\label{Schroedingerresultregular}
    Let $\alpha \in \mathbb{T}$ be irrational so that $p_n/q_n \rightarrow \alpha$ and $(p_n,q_n) = 1$. Also, let $H_{\alpha,\theta}$ be a quasi-periodic Schr\"{o}dinger operator with a continuous potential $v$. If $\mathbb{C} \setminus \Sigma(\alpha)$ is regular with respect to the Dirichlet problem and $L(\alpha,E) = 0$ for $E \in \Sigma(\alpha)$ then
    \begin{equation}
        \lim_{p_n/q_n \rightarrow \alpha} Cap\big(S_+(p_n/q_n)\big) = Cap\big(\Sigma(\alpha)\big).
    \end{equation}
\end{theorem}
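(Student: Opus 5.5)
The plan is to prove the two inequalities $\liminf_n Cap\big(S_+(p_n/q_n)\big)\ge Cap\big(\Sigma(\alpha)\big)$ and $\limsup_n Cap\big(S_+(p_n/q_n)\big)\le Cap\big(\Sigma(\alpha)\big)$ separately. The lower bound comes from the zero Lyapunov exponent hypothesis. The upper bound comes from regularity together with the Hausdorff convergence of Theorem \ref{AvronSimon} (equivalently Theorem \ref{CpotentialContinuity}).

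\emph{Lower bound.} Since $L(\alpha,E)=0$ on $\Sigma(\alpha)$, it holds in particular $\mu_{\Sigma(\alpha)}$-a.e., so Theorem \ref{SimonCapacity} gives $Cap(\Sigma(\alpha))=1$. On the periodic side, fix any $\theta$ and write $t:=t_{p/q}(\theta,\cdot)$. By Proposition \ref{PropertiesofDiscriminant}, $t$ is a monic real polynomial of degree $q$ and $\sigma(p/q,\theta)=t^{-1}([-2,2])$.

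The standard pull-back formula for Green's functions says that
\[
G_{t^{-1}([-2,2])}(z)=\tfrac1q\, G_{[-2,2]}\big(t(z)\big).
\]
To check this, note that the right-hand side is harmonic off $t^{-1}([-2,2])$ and tends to $0$ at every boundary point. Moreover, since $G_{[-2,2]}(w)=\log|w|+o(1)$ (because $Cap([-2,2])=1$) and $t$ is monic, the right-hand side equals $\log|z|+o(1)$ at infinity. Hence $Cap(\sigma(p/q,\theta))=Cap([-2,2])^{1/q}=1$.

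By monotonicity of capacity, $Cap(S_+(p_n/q_n))\ge Cap(\sigma(p_n/q_n,\theta))=1=Cap(\Sigma(\alpha))$ for every $n$. This already gives the liminf inequality. The only potential subtlety is the pull-back identity, which is classical (it is essentially the content of Theorem \ref{TotikPeherstorfer}(c) with $t_n(K)=2$ and Widom factor $2$).

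\emph{Upper bound.} Fix $\varepsilon>0$. Because $\mathbb{C}\setminus\Sigma(\alpha)$ is regular, $G_{\Sigma(\alpha)}$ extends continuously to $\mathbb{C}$ and vanishes on $\Sigma(\alpha)$. Since $G_{\Sigma(\alpha)}$ is uniformly continuous on a compact neighbourhood of $\Sigma(\alpha)$, there is $\delta>0$ such that $G_{\Sigma(\alpha)}\le\varepsilon$ on the closed $\delta$-neighbourhood $\overline{\Sigma(\alpha)_\delta}$.

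Set $K_\varepsilon:=\{z: G_{\Sigma(\alpha)}(z)\le\varepsilon\}$. This is a compact set containing $\overline{\Sigma(\alpha)_\delta}$. Its Green's function is $G_{\Sigma(\alpha)}-\varepsilon$: this function is harmonic outside $K_\varepsilon$, vanishes on $\partial K_\varepsilon$, and has the right growth at infinity. Comparing constant terms at infinity gives $Cap(K_\varepsilon)=e^{\varepsilon}Cap(\Sigma(\alpha))$.

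By Theorem \ref{AvronSimon}, $d_H\big(S_+(p_n/q_n),\Sigma(\alpha)\big)\to 0$, so $S_+(p_n/q_n)\subset\Sigma(\alpha)_\delta\subset K_\varepsilon$ for all large $n$. Monotonicity then gives $Cap(S_+(p_n/q_n))\le e^{\varepsilon}Cap(\Sigma(\alpha))$ for large $n$. Letting $\varepsilon\to0$ yields the limsup inequality, and combining it with the lower bound proves the theorem.

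I expect the main care to be needed in the upper bound: producing a neighbourhood of $\Sigma(\alpha)$ whose capacity is close to $Cap(\Sigma(\alpha))$ requires continuity of $G_{\Sigma(\alpha)}$ up to $\Sigma(\alpha)$. This is exactly where regularity is used, and it is the step that must be replaced by a different argument when regularity is dropped. Capacity is not continuous under Hausdorff convergence in general, so some such input is unavoidable.
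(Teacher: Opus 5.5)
Your proof is correct and has the same two-part structure as the paper's. The lower bound is the same argument: Simon's theorem gives $Cap(\Sigma(\alpha))=1$, each fixed-phase spectrum $\sigma(p_n/q_n,\theta)$ has capacity $1$, and monotonicity finishes. The only change is that you get $Cap(\sigma(p_n/q_n,\theta))=1$ from the pull-back $G_{t^{-1}([-2,2])}=\frac1q\, G_{[-2,2]}\circ t$ rather than from the Chebyshev/Widom-factor characterization in Theorem \ref{TotikPeherstorfer}. This step uses that the full complex preimage $t^{-1}([-2,2])$ is real, which follows from items (2)--(3) of Proposition \ref{PropertiesofDiscriminant}.

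For the upper bound, your inclusion $S_+(p_n/q_n)\subset\{G_{\Sigma(\alpha)}\le\varepsilon\}$ together with $Cap(\{G_{\Sigma(\alpha)}\le\varepsilon\})=e^{\varepsilon}Cap(\Sigma(\alpha))$ is a repackaging of the paper's extended-maximum-principle estimate $G_{\Sigma(\alpha)}-G_{S_+(p_n/q_n)}\le\Gamma_n:=\sup_{S_+(p_n/q_n)}G_{\Sigma(\alpha)}$. Both give $Cap(S_+(p_n/q_n))\le e^{\Gamma_n}Cap(\Sigma(\alpha))$, and both use regularity in exactly the same place.
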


\begin{proof}
    Since $\Sigma(\alpha)$ and $S_+(p_n/q_n)$ are compact, we can consider their Green's functions $G_{\Sigma(\alpha)}$ and $G_{S_+(p_n/q_n)}$. Now, let's consider following properties of the function $u$ defined as 
    \begin{equation}
        u(z) := G_{\Sigma(\alpha)}(z) - G_{S_+(p_n/q_n)}(z).
    \end{equation}
    \begin{enumerate}
        \item By definition
        \begin{align}
            u(z) &= \log|z| - \log \Big(Cap\big(\Sigma(\alpha)\big)\Big) - \log|z| + \log \Big(Cap\big(S_+(p_n/q_n)\big)\Big) + \mathcal{O} \left(\frac{1}{|z|}\right)\\
            &= \log\Bigg(\frac{Cap\big(S_+(p_n/q_n)\big)}{Cap(\Sigma)}\Bigg) + \mathcal{O} \left(\frac{1}{|z|}\right) \label{Greenfunctiondifasymptotic}
        \end{align}
        as $|z| \rightarrow \infty$. Therefore $u$ is bounded on $\mathbb{C} \setminus S_+(p_n/q_n)$.\\

        \item By definition, $G_{\Sigma(\alpha)}$ is subharmonic everywhere and $G_{S_+(p_n/q_n)}$ is harmonic on $\mathbb{C} \setminus S_+(p_n/q_n)$, so $u$ is subharmonic on $\mathbb{C} \setminus S_+(p_n/q_n)$.\\

        \item By the regularity of $\Sigma(\alpha)$, $G_{\Sigma(\alpha)}(z) \rightarrow 0$ as $z \rightarrow \zeta$ for $\zeta \in \Sigma(\alpha)$ and $G_{\Sigma(\alpha)}$ is continuous everywhere. We also know that any Green's function and hence $G_{S_+(p_n/q_n)}$ is non-negative on $\mathbb{C}$. Therefore 
        \begin{equation*}
            \lim_{z \rightarrow \zeta} u(z) \leq \sup_{z \in S_+(p_n/q_n)} G_{\Sigma(\alpha)}(z)
        \end{equation*}
        for $\zeta \in S_+(p_n/q_n)$.
    \end{enumerate}
    Introducing 
    \begin{equation}
        \Gamma_n := \sup_{z \in S_+(p_n/q_n)} G_{\Sigma(\alpha)}(z)
    \end{equation}
     and using properties (1), (2), (3) with the Extended Maximum Principle (Theorem \ref{ExtendedMaximumPrinciple}) we deduce that $u - \Gamma_n \leq 0$ on $\mathbb{C} \setminus S_+(p_n/q_n)$, i.e.
    \begin{equation*}
        G_{\Sigma(\alpha)}(z) - G_{S_+(p_n/q_n)}(z) \leq \Gamma_n
    \end{equation*}
    for $z \in \mathbb{C} \setminus S_+(p_n/q_n)$. Letting $|z|$ tend to infinity and using \eqref{Greenfunctiondifasymptotic} we get 
    \begin{equation}\label{Gammabound}
        \log\Bigg(\frac{Cap\big(S_+(p_n/q_n)\big)}{Cap(\Sigma)}\Bigg) \leq \Gamma_n.
    \end{equation}
    Noting that $G_{\Sigma(\alpha)}$ is continuous everywhere (by the regularity of $\Sigma(\alpha)$) and $S_+(p_n/q_n)$ converges to $\Sigma(\alpha)$ in Hausdorff metric as $p_n/q_n$ goes to $\alpha$, we conclude that $\Gamma_n \rightarrow 0$ as $p_n/q_n \rightarrow \alpha$. Therefore
    
    \begin{equation}\label{CapacityLowerApproximation}
        \limsup_{p_n/q_n \rightarrow \alpha} Cap\big(S_+(p_n/q_n)\big) \leq Cap(\Sigma)
    \end{equation}

    Next, using Proposition \ref{PropertiesofDiscriminant} we can observe that for any fixed phase $\theta$, the discriminant function $t_{p_n/q_n}(\theta,E)$ is the qth Chebyshev polynomial for the spectrum $\sigma(p_n/q_n,\theta)$ of $H_{p_n/q_n,\theta}$. Also note that after these observation the qth Chebyshev number of $\sigma(p_n/q_n,\theta)$ is 2, so using Theorem \ref{TotikPeherstorfer} we get
    \begin{equation}
        \big[Cap\big(\sigma(p_n/q_n,\theta)\big)\big]^{q_n} = \frac{t_{q_n}\big(\sigma(p_n/q_n,\theta)\big)}{2} = 1,
    \end{equation}
so for any fixed $\theta$, capacity of the spectrum is 1. However, since we assume that the Lyapunov exponent of $H_{\alpha,\theta}$ is 0 on the spectrum $\Sigma(\alpha)$, by Theorem \ref{SimonCapacity}, the capacity of $\Sigma(\alpha)$ is also 1. Therefore, using the monotonicity property of capacity for any fixed $\theta$ we get

    \begin{equation}
        Cap\big(S_+(p_n/q_n)\big) \geq Cap\big(\sigma(p_n/q_n,\theta)\big) = 1 = Cap\big(\Sigma(\alpha)\big) 
    \end{equation}

    and hence

    \begin{equation}\label{CapacityUpperApproximation}
        \liminf_{p_n/q_n \rightarrow \alpha} Cap\big(S_+(p_n/q_n)\big) \geq Cap(\Sigma)
    \end{equation}

Finally, combining \eqref{CapacityLowerApproximation} and \eqref{CapacityUpperApproximation} we obtain

 \begin{equation}
        \lim_{p_n/q_n \rightarrow \alpha} Cap\big(S_+(p_n/q_n)\big) = Cap\big(\Sigma(\alpha)\big).
    \end{equation}

\end{proof}

Next, we consider the case where $\Sigma(\alpha)$ is not regular.

\begin{theorem}\label{Schroedingerresultirregular}
    Let $\alpha \in \mathbb{T}$ be irrational so that $p_n/q_n \rightarrow \alpha$ and $(p_n,q_n) = 1$. If $H_{\alpha,\theta}$ is a quasi-periodic Schr\"{o}dinger operator with a continuous potential $v$ and its Lyapunov exponent $L(\alpha,E)$ is 0 on its spectrum $\Sigma(\alpha)$ then 
    \begin{equation}
        \lim_{p_n/q_n \rightarrow \alpha} Cap\big(S_+(p_n/q_n)\big) = Cap\big(\Sigma(\alpha)\big).
    \end{equation}
\end{theorem}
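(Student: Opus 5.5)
The plan is to keep the lower bound from the proof of Theorem \ref{Schroedingerresultregular} unchanged, and to replace the Green's function argument for the upper bound by a monotonicity-plus-continuity argument that does not use regularity.

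\textbf{Lower bound.} This part of the proof of Theorem \ref{Schroedingerresultregular} never used regularity, so I will repeat it verbatim. By Proposition \ref{PropertiesofDiscriminant}, for each fixed $\theta$ the discriminant $t_{p_n/q_n}(\theta,\cdot)$ is a monic polynomial of degree $q_n$ with $\sigma(p_n/q_n,\theta)=t_{p_n/q_n}(\theta,\cdot)^{-1}([-2,2])$. By the alternation theorem it is therefore the $q_n$th Chebyshev polynomial of $\sigma(p_n/q_n,\theta)$, with Chebyshev number $2$. Item (c) of Theorem \ref{TotikPeherstorfer} then gives
\[
Cap\big(\sigma(p_n/q_n,\theta)\big)=1 .
\]
Since $L=0$ on $\Sigma(\alpha)$, Theorem \ref{SimonCapacity} gives $Cap(\Sigma(\alpha))=1$. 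Monotonicity of capacity then yields $Cap(S_+(p_n/q_n))\ge 1=Cap(\Sigma(\alpha))$ for every $n$, and so
\[
\liminf_{n\to\infty} Cap\big(S_+(p_n/q_n)\big)\ge Cap\big(\Sigma(\alpha)\big).
\]

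\textbf{Upper bound.} For $\delta>0$ let $\overline{\Sigma_\delta}:=\{E\in\mathbb{C}:\ \mathrm{dist}(E,\Sigma(\alpha))\le\delta\}$, which is compact.
\begin{enumerate}
\item By Theorem \ref{AvronSimon} (or Theorem \ref{CpotentialContinuity}), $d_H\big(S_+(p_n/q_n),\Sigma(\alpha)\big)\to 0$. Hence for each $\delta>0$ there is $N(\delta)$ with $S_+(p_n/q_n)\subset\overline{\Sigma_\delta}$ for all $n\ge N(\delta)$.
\item Monotonicity then gives $\limsup_n Cap(S_+(p_n/q_n))\le Cap(\overline{\Sigma_\delta})$ for every $\delta>0$.
\item Take $\delta=1/k$. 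The sets $\overline{\Sigma_{1/k}}$ form a decreasing sequence of compact sets whose intersection is exactly $\Sigma(\alpha)$, because $\Sigma(\alpha)$ is closed. Property \eqref{CapCompactConvergence} therefore gives $Cap(\overline{\Sigma_{1/k}})\to Cap(\Sigma(\alpha))$.
\end{enumerate}
Letting $k\to\infty$ in step 2 yields $\limsup_n Cap(S_+(p_n/q_n))\le Cap(\Sigma(\alpha))$. Combined with the lower bound, this proves the theorem.

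\textbf{Main obstacle.} There is no real obstacle here. The only point to check is that outer continuity of capacity under decreasing compact intersections (property (5) in the list of elementary properties) holds with no regularity hypothesis, which is the case. This also shows that the Green's function argument is needed only to control the rate $\Gamma_n$, not for convergence itself.

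In fact, the upper bound holds for any compact $\Sigma$ approximated in the Hausdorff metric. The zero Lyapunov exponent assumption enters only through the lower bound, via $Cap(\Sigma(\alpha))=1$.
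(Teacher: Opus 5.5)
Your proposal is correct and follows the paper's proof: the lower bound uses the same Chebyshev/Simon argument giving $Cap(\sigma(p_n/q_n,\theta))=1=Cap(\Sigma(\alpha))$, and the upper bound uses the same shrinking closed neighborhoods of $\Sigma(\alpha)$ together with Hausdorff convergence and outer continuity of capacity on decreasing compact sets. Your fixed-$\delta$ formulation is slightly cleaner than the paper's, which works with $\varepsilon_n=d_H$ directly and assumes "without loss of generality" that $\varepsilon_n$ is non-increasing; strictly, that step requires replacing $\varepsilon_n$ by $\sup_{m\ge n}\varepsilon_m$.
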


\begin{proof}
    The main difference compared to the proof of the regular case, Theorem \ref{Schroedingerresultregular}, comes from the fact that $G_{\Sigma(\alpha)}$ may be discontinuous on a polar subset of $\Sigma(\alpha)$. Losing continuity of $G_{\Sigma(\alpha)}$, we use a different approach to obtain
    \begin{equation*}
        \limsup_{\frac{p_n}{q_n} \rightarrow \alpha} Cap\Bigg(S_+\bigg(\frac{p_n}{q_n}\bigg)\Bigg) \leq Cap\big(\Sigma(\alpha)\big).
    \end{equation*}
    Let's define 
    \begin{equation}
        \varepsilon_n := d_H\Bigg(\Sigma(\alpha),S_+\bigg(\frac{p_n}{q_n}\bigg)\Bigg)
    \end{equation}
    and
    \begin{equation}
        \Sigma_{\varepsilon_n}(\alpha) := \displaystyle\bigcup_{E \in \Sigma(\alpha)} \Big\{ z \in \mathbb{C} ~\big|~ |z-E| \leq \varepsilon_n\Big\}.
    \end{equation}
    We know that $\{\varepsilon_n\}_{n}$ is a positive sequence converging to zero, so without loss of generality we assume that $\{\varepsilon_n\}_{n}$ is a non-increasing sequence. Now let's note following properties:
    \begin{enumerate}
        \item $\Sigma_{\varepsilon_n}(\alpha)$ is compact by definition,
        \item $S_+(p_n/q_n) \subseteq \Sigma_{\varepsilon_n}(\alpha)$ by definition of $\varepsilon_n$,
        \item $\Sigma_{\varepsilon_{n+1}}(\alpha) \subseteq \Sigma_{\varepsilon_n}(\alpha)$ by the assumption on $\{\varepsilon_n\}_{n}$ and
        \item $\bigcap_{n} \Sigma_{\varepsilon_n}(\alpha) = \Sigma(\alpha)$ by the fact that $\varepsilon_n \rightarrow 0$.
    \end{enumerate}
    Using items (1), (3) and (4) and the capacity property of nested compact sets \eqref{CapCompactConvergence} we get
    \begin{equation}\label{SigmaEpsilonConvergence}
        Cap\big(\Sigma_{\varepsilon_n}(\alpha)\big) \rightarrow Cap\big(\Sigma(\alpha)\big) \quad \text{as} \quad n \rightarrow \infty.
    \end{equation}
    Next, by item (2) above and the monotonicity property of capacity we have
    \begin{equation}\label{S+SigmaEpsilonInequality}
        Cap\Bigg(S_+\bigg(\frac{p_n}{q_n}\bigg)\Bigg) \leq Cap\big(\Sigma_{\varepsilon_n}(\alpha)\big)
    \end{equation}
    for any $n$. Similarly, by item (3) above and the monotonicity property of the capacity, we have
    \begin{equation}\label{S+SigmaEpsilonNested}
        Cap\big(\Sigma_{\varepsilon_{n+1}}(\alpha)\big) \leq Cap\big(\Sigma_{\varepsilon_n}(\alpha)\big)
    \end{equation}
    for any $n$. 
    Combining \eqref{SigmaEpsilonConvergence}, \eqref{S+SigmaEpsilonInequality} and \eqref{S+SigmaEpsilonNested} we obtain
    \begin{equation}\label{CapacityLowerApproximation2}
        \limsup_{\frac{p_n}{q_n} \rightarrow \alpha} Cap\Bigg(S_+\bigg(\frac{p_n}{q_n}\bigg)\Bigg) \leq Cap\big(\Sigma(\alpha)\big).
    \end{equation}
    For the rest of the proof in the regular case we did not use regularity of $G_{\Sigma(\alpha)}$, so following the same arguments we can get
    \begin{equation}\label{CapacityUpperApproximation2}
        Cap\big(\Sigma(\alpha)\big) \leq \liminf_{\frac{p_n}{q_n} \rightarrow \alpha} Cap\Bigg(S_+\bigg(\frac{p_n}{q_n}\bigg)\Bigg).
    \end{equation}

Again, combining \eqref{CapacityLowerApproximation2} and \eqref{CapacityUpperApproximation2} we obtain

 \begin{equation}
        \lim_{p_n/q_n \rightarrow \alpha} Cap\big(S_+(p_n/q_n)\big) = Cap\big(\Sigma(\alpha)\big).
    \end{equation}

\end{proof}

\section{Convergence for harmonic and equilibrium measures}\label{HarmonicMeasures}

In this section using our theorems from the previous section and a recent result of Totik \cite{Tot24}, we obtain convergence results in harmonic and equilibrium measures.

Let $K$ be a non-polar, i.e. $Cap(K) > 0$, compact set. We define the harmonic measure for $K$ as follows. Let $\Omega_K$ denote the unbounded component of $\overline{\mathbb{C}} \setminus K$. Then the \textit{harmonic measure} of $z \in \Omega_K$ with respect to $\Omega_K$, denoted by $\omega_{\Omega_K}(z,\cdot)$, is the unique measure on $\partial\Omega_K$ with the property that for all functions $h$ that are continuous on the closure of $\Omega_K$ and harmonic in $\Omega_K$
(including the point $\infty$) the representation
\begin{equation*}
h(z) = \int_{\partial\Omega_K} h(\zeta)~d\omega_{\Omega_K}(z,\zeta)   
\end{equation*}
holds. For any $z \in \Omega_K$, this unique measure exists if $K$ is non-polar.

Recently, Totik proved the following convergence result for harmonic measures \cite{Tot24}.

\begin{theorem}\label{Totikharmonicmeasure}\emph{\textbf{(}\cite{Tot24}, \textit{Theorem 1.2}\textbf{)}}
     Let $K$, $K_n$, $n = 1,2,\cdots$ be compact sets, $$Cap(K) > 0,$$ $$d_H(K_n,K) \rightarrow 0$$ and $$Cap(K_n) \rightarrow Cap(K)$$ as $n \rightarrow \infty$. Then for all $z \in \Omega_K$ we have 
     \begin{equation}
         \omega_{\Omega_{K_n}}(z,\cdot) \rightarrow \omega_{\Omega_K}(z,\cdot)
     \end{equation}
     in the weak*-topology as $n \rightarrow \infty$. 
\end{theorem}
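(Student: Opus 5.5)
The plan is to handle the pole at infinity first, where the harmonic measure is the equilibrium measure, and then move a finite pole to infinity by an inversion. Capacity convergence is what makes the first step work. The second step only needs the first step plus a formula relating capacity after inversion to the Green's function.

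\emph{Step 1 ($z=\infty$).} Let $\mu_n:=\mu_{K_n}=\omega_{\Omega_{K_n}}(\infty,\cdot)$. Since $d_H(K_n,K)\to0$, all $K_n$ lie in a fixed compact set, so $\{\mu_n\}$ is weak$^*$ precompact. Any subsequential limit $\mu$ is supported on $K$: for $\varepsilon>0$, eventually $\operatorname{supp}\mu_n\subset K_\varepsilon$.

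The kernel $\log\frac1{|z-w|}$ is lower semicontinuous and bounded below on bounded sets. Hence the energy is weak$^*$ lower semicontinuous, and
\[
I(\mu)\le\liminf_n I(\mu_n)=\liminf_n\bigl(-\log Cap(K_n)\bigr)=-\log Cap(K).
\]
So $\mu\in M(K)$ minimizes the energy. By uniqueness of the equilibrium measure of the non-polar compact $K$, $\mu=\mu_K=\omega_{\Omega_K}(\infty,\cdot)$, and therefore the whole sequence converges.

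As a by-product, for $a\in\Omega_K\setminus\{\infty\}$ the kernel $\log\frac1{|a-w|}$ is continuous near $K$. This gives $U^{\mu_n}(a)\to U^{\mu_K}(a)$, and with the capacity convergence,
\[
G_{K_n}(a)=U^{\mu_n}(a)-\log Cap(K_n)\longrightarrow G_K(a).
\]

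\emph{Step 2 (finite $z\in\Omega_K$).} First, $z\in\Omega_{K_n}$ for large $n$. Indeed, $z$ is joined to $\infty$ by a path at positive distance $\delta$ from $K$, and once $d_H(K_n,K)<\delta$ this path avoids $K_n$.

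Apply the inversion $\phi(\zeta)=1/(\zeta-z)$. It sends $\Omega_K$ conformally onto the unbounded component of the complement of the compact set $\phi(K)$, with $z\mapsto\infty$. Since $K$ and $K_n$ stay a fixed distance from $z$, $\phi$ is uniformly Lipschitz on a neighbourhood of them, so $d_H(\phi(K_n),\phi(K))\to0$. Also $Cap(\phi(K))>0$, because $\phi$ is bi-Lipschitz there.

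Harmonic measure is conformally invariant, so $\omega_{\Omega_K}(z,\cdot)=\phi^{-1}_*\,\mu_{\phi(K)}$, and likewise for $K_n$. By Step 1 applied to $\phi(K_n)\to\phi(K)$, it therefore suffices to prove $Cap(\phi(K_n))\to Cap(\phi(K))$.

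For this I would use the identity
\[
\log Cap(\phi(K))=-\log Cap(K)-G_K(z).
\]
To check it, note that $G_K(\phi^{-1}(w))$ is harmonic off $\phi(K)$ with a logarithmic pole at $w=0$, not at $\infty$. The Green's function of $\phi(K)$ with pole at $\infty$ is obtained from the symmetry $g_\Omega(\zeta,\infty)=g_\Omega(\infty,\zeta)$, and the identity follows by comparing constant terms at the pole. This is a standard Robin-constant computation and I would verify it on a disk.

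Given the identity, the convergence $Cap(K_n)\to Cap(K)$ and $G_{K_n}(z)\to G_K(z)$ from Step 1 give the needed capacity convergence. The weak$^*$ convergence then pushes forward under the homeomorphism $\phi^{-1}$ of a neighbourhood of $K$.

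\emph{Main obstacle.} I expect the delicate point to be the inversion step: establishing the Robin-constant identity rigorously when $K$ is irregular, so that Green's functions vanish only quasi-everywhere on $\partial\Omega_K$. Step 1 itself is routine once lower semicontinuity of the energy is in place.
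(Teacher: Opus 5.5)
The paper gives no proof of this statement; it is quoted from Totik. Your argument therefore has to stand on its own. Step~1 is correct: lower semicontinuity of the energy and uniqueness of the equilibrium measure give the convergence at $\infty$. The by-product $G_{K_n}(a)\to G_K(a)$ for finite $a\in\Omega_K$ is also correct; the sign of $U^{\mu_n}$ in your formula should be flipped, $G_K=-U^{\mu_K}-\log Cap(K)$, but this is harmless. The reduction in Step~2 to proving $Cap(\phi(K_n))\to Cap(\phi(K))$ is sound.

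\textbf{The gap.} The identity $\log Cap(\phi(K))=-\log Cap(K)-G_K(z)$ is false. Take $K=\overline{\mathbb{D}}$ and $z=R>1$. Then $\phi(K)$ is the closed disk with diameter $[-1/(R-1),-1/(R+1)]$, so $\log Cap(\phi(K))=-\log(R^2-1)$, while your right-hand side is $-\log R$.

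No formula in terms of $Cap(K)$ and $G_K(z)$ alone can exist. The pairs $K=\overline{\mathbb{D}}$, $z=2$ and $K=[-2,2]$, $z=5/2$ both have $Cap(K)=1$ and $G_K(z)=\log 2$, yet $Cap(\phi(K))$ equals $1/3$ and $4/9$ respectively.

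What conformal invariance actually gives is that $-\log Cap(\phi(K))$ is the Robin constant $\lim_{\zeta\to z}\bigl(g_{\Omega_K}(\zeta,z)+\log|\zeta-z|\bigr)$. This is the constant term of the Green function at the finite pole $z$, a global quantity. Symmetry only identifies the value $g_{\Omega_K}(\infty,z)=G_K(z)$ at the other end. So the difficulty is not irregularity of $K$: the identity already fails for a disk, and the check on a disk that you postponed is exactly where the argument breaks.

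\textbf{The fix.} What is missing is convergence of these Robin constants, and your by-product supplies exactly the input needed.
\begin{enumerate}
\item Let $u_n=g_{\Omega_{K_n}}(\cdot,z)$ and $u=g_{\Omega_K}(\cdot,z)$. By symmetry, $u_n(\infty)=G_{K_n}(z)\to G_K(z)=u(\infty)$.
\item Every compact subset of $\Omega_K\setminus\{z\}$ lies in $\Omega_{K_n}$ for large $n$, by your path argument. Harnack's inequality, normalized by the bounded values $u_n(\infty)$, and the maximum principle for $u_n+\log|\cdot-z|$ on a fixed disk about $z$ then make $\{u_n\}$ a normal family.
\item Any subsequential limit $\tilde u$ is nonnegative and harmonic on $\Omega_K\setminus\{z\}$, with the same logarithmic singularity at $z$.
\item The function $u-\tilde u$ is harmonic on $\Omega_K$, bounded above, and has $\limsup\le 0$ at nearly every point of $\partial\Omega_K$. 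The extended maximum principle gives $u\le\tilde u$.
\item Then $\tilde u-u\ge 0$ is harmonic on $\Omega_K$ and vanishes at $\infty$, so $\tilde u\equiv u$ by the minimum principle.
\end{enumerate}
Hence $u_n+\log|\cdot-z|\to u+\log|\cdot-z|$ uniformly near $z$, so the Robin constants converge and $Cap(\phi(K_n))\to Cap(\phi(K))$. Your Step~1 then completes the proof.
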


Recalling that we have the convergence of the rational frequency approximates in the Hausdorff metric \cite{AS83}, we can use Theorem \ref{Totikharmonicmeasure} to get the following results as a corollary of Theorem \ref{Schroedingerresultirregular}. 

Note that the sets we consider $S_+(p/q)$ and $\Sigma(\alpha)$ are subsets of the real line, so the unbounded components of their complements are the same as their complements. Also note that $Cap(\Sigma(\alpha)) \neq 0$, so we never deal with polar sets. 

\begin{theorem}\label{HarmonicMeasureRegularSchroedingerresult2}
 Let $H_{\alpha,\theta}$ be a quasi-periodic Schr\"{o}dinger operator with a continuous potential $v$, and $\alpha \in \mathbb{T}$ be irrational such that $p_n/q_n \rightarrow \alpha$ and $(p_n,q_n) = 1$. If Lyapunov exponent of $H_{\alpha,\theta}$ is 0 on its spectrum $\Sigma(\alpha)$, then for $z \in \Omega_{\alpha}$
    \begin{equation}
        \omega_{\Omega_{p_n/q_n}}(z,\cdot) \rightarrow \omega_{\Omega_{\alpha}}(z,\cdot)
     \end{equation}
    in the weak*-topology as $p_n/q_n \rightarrow \alpha$, where
     $$\Omega_{p_n/q_n} := \overline{\mathbb{C}} \setminus S_+(p_n/q_n) \qquad \text{and} \qquad \Omega_{\alpha} := \overline{\mathbb{C}}\setminus\Sigma(\alpha).$$ 
\end{theorem}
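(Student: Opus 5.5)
The plan is to apply Totik's theorem (Theorem \ref{Totikharmonicmeasure}) with $K = \Sigma(\alpha)$ and $K_n = S_+(p_n/q_n)$. To do so we need to check its three hypotheses and identify the domains $\Omega_K$, $\Omega_{K_n}$ with $\Omega_\alpha$ and $\Omega_{p_n/q_n}$.

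\emph{Non-polarity.} By Theorem \ref{SimonCapacity}, the assumption $L(\alpha,E)=0$ on $\Sigma(\alpha)$ gives $Cap(\Sigma(\alpha))=1>0$. This is exactly condition (a) there, since $\mu_{\Sigma(\alpha)}$ is supported on $\Sigma(\alpha)$.

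\emph{Hausdorff convergence.} The Avron--Simon theorem (Theorem \ref{AvronSimon}, equivalently Theorem \ref{CpotentialContinuity}) gives $d_H(S_+(p_n/q_n),\Sigma(\alpha))\to 0$ for continuous $v$.

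\emph{Capacity convergence.} Theorem \ref{Schroedingerresultirregular} gives $Cap(S_+(p_n/q_n))\to Cap(\Sigma(\alpha))$. It requires neither regularity nor analyticity.

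\emph{Identification of domains.} Both $\Sigma(\alpha)$ and $S_+(p_n/q_n)$ are compact subsets of $\mathbb{R}$, so their complements in $\overline{\mathbb{C}}$ are connected. Hence the unbounded components are $\Omega_K=\Omega_\alpha$ and $\Omega_{K_n}=\Omega_{p_n/q_n}$, as remarked just before the statement. Also, $\partial\Omega_{K_n}=S_+(p_n/q_n)$ is non-polar, since $Cap(S_+(p_n/q_n))\ge 1$ from the proof of Theorem \ref{Schroedingerresultregular}. Therefore the harmonic measures $\omega_{\Omega_{p_n/q_n}}(z,\cdot)$ exist and are unique.

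A small point about the evaluation point: $z\in\Omega_\alpha$ need not lie in $\Omega_{p_n/q_n}$ for every $n$. By Hausdorff convergence, however, $\operatorname{dist}(z,S_+(p_n/q_n))\ge \operatorname{dist}(z,\Sigma(\alpha))/2>0$ for all large $n$, so the measures are defined for large $n$. For $z=\infty$ there is nothing to check.

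With these hypotheses in place, Theorem \ref{Totikharmonicmeasure} yields the weak$^*$ convergence directly. The only substantive input is the capacity convergence, and that is already established. I therefore expect no real obstacle; the main care needed is just this bookkeeping of the domains and the evaluation point $z$.
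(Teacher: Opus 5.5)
Your proposal is correct and is essentially the paper's argument: the paper also obtains this as a direct corollary of Totik's theorem (Theorem~\ref{Totikharmonicmeasure}), using the Avron--Simon Hausdorff convergence, the capacity convergence of Theorem~\ref{Schroedingerresultirregular}, non-polarity of $\Sigma(\alpha)$, and the fact that complements of compact subsets of $\mathbb{R}$ are connected. Your extra check that $z\in\Omega_\alpha$ lies in $\Omega_{p_n/q_n}$ for all large $n$ is a detail the paper leaves implicit.
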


    For a compact non-polar set $K$, the harmonic measure at $\infty$ is nothing but the equilibrium measure for the set. Therefore, we automatically get

\begin{theorem}\label{EquilibriumMeasureRegularSchroedingerresult2}
Let $H_{\alpha,\theta}$ be a quasi-periodic Schr\"{o}dinger operator with a continuous potential $v$, and $\alpha \in \mathbb{T}$ be irrational such that $p_n/q_n \rightarrow \alpha$ and $(p_n,q_n) = 1$. If Lyapunov exponent of $H_{\alpha,\theta}$ is 0 on its spectrum $\Sigma(\alpha)$, then 
    \begin{equation}
        \mu_{S_+(p_n/q_n)} \rightarrow \mu_{\Sigma(\alpha)}
     \end{equation}
    in the weak*-topology as $p_n/q_n \rightarrow \alpha$, where $\mu_K$ is the equilibrium measure of the compact set $K$.
\end{theorem}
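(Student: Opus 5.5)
The plan is to specialize Theorem \ref{HarmonicMeasureRegularSchroedingerresult2} to the point $z=\infty$. All sets involved lie in $\mathbb{R}$, so $\Omega_{p_n/q_n}=\overline{\mathbb{C}}\setminus S_+(p_n/q_n)$ and $\Omega_\alpha=\overline{\mathbb{C}}\setminus\Sigma(\alpha)$ are connected domains containing $\infty$, and $\infty\in\Omega_\alpha$ is admissible.

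First I will check the hypotheses of Totik's Theorem \ref{Totikharmonicmeasure} with $K=\Sigma(\alpha)$ and $K_n=S_+(p_n/q_n)$.
\begin{itemize}
\item Non-polarity: by Theorem \ref{SimonCapacity}, the assumption $L=0$ on $\Sigma(\alpha)$ gives $Cap(\Sigma(\alpha))=1>0$.
\item Hausdorff convergence $d_H(K_n,K)\to0$: this is Theorem \ref{AvronSimon}.
\item Capacity convergence $Cap(K_n)\to Cap(K)$: this is Theorem \ref{Schroedingerresultirregular}.
\end{itemize}
Totik's theorem then gives weak$^*$ convergence $\omega_{\Omega_{K_n}}(z,\cdot)\to\omega_{\Omega_K}(z,\cdot)$ for all $z\in\Omega_K$. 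In particular it holds for $z=\infty$, which is exactly the content of Theorem \ref{HarmonicMeasureRegularSchroedingerresult2} evaluated at $\infty$.

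Second, I will identify the harmonic measures at $\infty$ with equilibrium measures. For a non-polar compact $K$, Theorem 4.3.14 of \cite{Ran95}, recalled in Section \ref{Prelim}, gives $\mu_K=\omega_D(\infty,\cdot)$, where $D$ is the unbounded component of the complement. This applies to $\Sigma(\alpha)$. It also applies to each $S_+(p_n/q_n)$, which is non-polar since it contains a spectrum $\sigma(p_n/q_n,\theta)$ of capacity $1$ (Theorem \ref{TotikPeherstorfer}). Substituting these identifications yields $\mu_{S_+(p_n/q_n)}\to\mu_{\Sigma(\alpha)}$ weak$^*$.

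The only point needing care is whether Totik's theorem allows the point at infinity, since some formulations treat $z\in\Omega_K\cap\mathbb{C}$ and $\infty$ separately. The paper's definition of harmonic measure explicitly includes $\infty$, so I expect this to be routine.

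If a fallback is needed, I would argue directly.
\begin{itemize}
\item All measures are supported in a fixed compact interval, so the family is tight; take any weak$^*$ limit $\nu$ of a subsequence. Hausdorff convergence gives $\operatorname{supp}\nu\subset\Sigma(\alpha)$.
\item Lower semicontinuity of energy under weak$^*$ convergence gives $I(\nu)\le\liminf I(\mu_{S_+(p_n/q_n)})=\liminf(-\log Cap(S_+(p_n/q_n)))=-\log Cap(\Sigma(\alpha))$.
\item Uniqueness of the energy minimizer on $\Sigma(\alpha)$ then forces $\nu=\mu_{\Sigma(\alpha)}$, so the full sequence converges.
\end{itemize}

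Finally, since $L=0$ on $\Sigma(\alpha)$, the Thouless formula \eqref{ThoulessFormula} gives $\mu_{\Sigma(\alpha)}=dk_\alpha$, so the limit can also be stated as the density of states measure.
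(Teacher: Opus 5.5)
Your proposal is correct and follows the paper's route: apply Totik's theorem at $z=\infty\in\Omega_\alpha$, using the Hausdorff convergence of Avron--Simon and the capacity convergence of Theorem \ref{Schroedingerresultirregular}, then identify $\omega_{\Omega_K}(\infty,\cdot)$ with $\mu_K$. Your fallback argument (weak$^*$ compactness, lower semicontinuity of the logarithmic energy, and uniqueness of the equilibrium measure on $\Sigma(\alpha)$) is also sound, and it gives a self-contained proof of the $z=\infty$ case that does not need Totik's theorem.
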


    Note that in our results we assume that the Lyapunov exponent of the quasi-periodic operator $H_{\alpha,\theta}$ is zero on the spectrum. In this case, it is well known that the density of states measure $dk_{\alpha}$ of $H_{\alpha,\theta}$ is the equilibrium measure for the spectrum, so we also obtain

\begin{theorem}\label{DOSEquilibriumMeasureRegularSchroedingerresult2}
Let $H_{\alpha,\theta}$ be a quasi-periodic Schr\"{o}dinger operator with a continuous potential $v$, and $\alpha \in \mathbb{T}$ be irrational such that $p_n/q_n \rightarrow \alpha$ and $(p_n,q_n) = 1$. If Lyapunov exponent of $H_{\alpha,\theta}$ is 0 on its spectrum, then 
    \begin{equation}
        \mu_{S_+(p_n/q_n)} \rightarrow dk_{\alpha}
     \end{equation}
    in the weak*-topology as $p_n/q_n \rightarrow \alpha$, where $\mu_{S_+(p_n/q_n)}$ is the equilibrium measure of $S_+(p_n/q_n)$ and $dk_{\alpha}$ is the density of states measure of $H_{\alpha,\theta}$.
\end{theorem}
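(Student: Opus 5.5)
The plan is to combine Theorem \ref{EquilibriumMeasureRegularSchroedingerresult2} with the identification $dk_{\alpha} = \mu_{\Sigma(\alpha)}$, which holds under the zero Lyapunov exponent hypothesis. By Theorem \ref{EquilibriumMeasureRegularSchroedingerresult2}, $\mu_{S_+(p_n/q_n)} \to \mu_{\Sigma(\alpha)}$ weak$^*$. It therefore suffices to show $dk_{\alpha} = \mu_{\Sigma(\alpha)}$.

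To prove this identification I would argue potential-theoretically.

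\emph{Step 1.} By the Thouless formula \eqref{ThoulessFormula}, $L(\alpha,A^E) = -U^{dk_\alpha}(E)$ for all $E\in\mathbb{C}$.

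\emph{Step 2.} The hypothesis gives $L = 0$ on $\Sigma(\alpha)$, hence $U^{dk_\alpha} = 0$ on $\Sigma(\alpha) = \operatorname{supp} dk_\alpha$. Consequently
\[
I(dk_\alpha) = \int U^{dk_\alpha}\,dk_\alpha = 0 .
\]

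\emph{Step 3.} By Theorem \ref{SimonCapacity} (direction (a)$\Rightarrow$(b)), $Cap(\Sigma(\alpha)) = 1$, so $I(\mu_{\Sigma(\alpha)}) = -\log Cap(\Sigma(\alpha)) = 0$.

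\emph{Step 4.} The probability measure $dk_\alpha$ on $\Sigma(\alpha)$ therefore attains the minimal energy $\inf_{\mu \in M(\Sigma(\alpha))} I(\mu)$. Uniqueness of the energy minimizer for a non-polar compact set gives $dk_\alpha = \mu_{\Sigma(\alpha)}$.

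Substituting this into Theorem \ref{EquilibriumMeasureRegularSchroedingerresult2} yields the claim.

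The step needing care is Step 4. Uniqueness of the equilibrium measure is a standard fact (strict convexity of energy on measures of finite energy; e.g.\ \cite{Ran95}), but it should be cited explicitly, since the paper only defined $\mu_K$ as a minimizer. As an alternative route to Step 4: $L$ is harmonic off $\Sigma(\alpha)$, vanishes on $\Sigma(\alpha)$, and satisfies $L(E) = \log|E| + o(1)$. This makes $L$ the Green's function $G_{\Sigma(\alpha)}$, since $Cap = 1$ makes the constant term vanish; here one uses the uniqueness of the Green's function as characterized in Section \ref{Prelim}. Equality of the potentials then gives equality of the measures via $\frac{1}{2\pi}\Delta$.

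A minor point is that Step 2 uses $L=0$ on all of $\Sigma(\alpha)$, exactly as assumed, so no quasi-everywhere issues arise.
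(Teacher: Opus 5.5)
Your proposal is correct and takes the same route as the paper: combine Theorem~\ref{EquilibriumMeasureRegularSchroedingerresult2} with the identification $dk_{\alpha} = \mu_{\Sigma(\alpha)}$, which holds under the zero Lyapunov exponent hypothesis. The paper simply asserts this identification as well known, whereas you give a valid justification of it via the Thouless formula, $Cap(\Sigma(\alpha)) = 1$ from Theorem~\ref{SimonCapacity}, and uniqueness of the energy minimizer.
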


\section{Multi-frequency discrete Schr\"{o}dinger operators}\label{MultiFrequency}

In this section, we consider our convergence results for multi-frequency Schr\"{o}dinger operators with continuous potentials, so let us start by defining multi-frequency Schr\"{o}dinger operators. 

Let $\vec{\alpha} = (\alpha_1, \alpha_2, \dots, \alpha_N)$ belong to the $N$-dimensional torus $\mathbb{T}^N := \mathbb{R}^N/\mathbb{Z}^N$. The\textit{ $N$-frequency quasi-periodic Schr\"odinger operator} $$H_{\vec{\alpha},\vec{\theta}}:~ l^2(\mathbb{Z}) \rightarrow l^2(\mathbb{Z})$$ with a continuous potential $v \in \mathcal{C}(\mathbb{T}^N,\mathbb{R})$ is given by
\begin{equation}\label{MultiDimensinoalSchroedinger1}
    \big(H_{\vec{\alpha},\vec{\theta}}\psi\big)_{n} =  \psi_{n+1} + \psi_{n-1} + v(\vec{\theta} + n\vec{\alpha})\psi_{n},
\end{equation}

where $\vec{\theta} \in \mathbb{T}^N$ and $\vec{\alpha} \in \mathbb{T}^N$ are the phase and frequency, respectively.

The spectrum of $H_{\vec{\alpha},\vec{\theta}}$ is denoted by $\sigma(\vec{\alpha},\vec{\theta})$ and is independent of $\vec{\theta}$ for any frequency $\vec{\alpha} = (\alpha_1,\dots,\alpha_N)$ such that $\{1,\alpha_1,\cdots,\alpha_N\}$ is a rationally independent set \cite{AS83}. Therefore we introduce a notation similar to the case of one-frequency for such $\vec{\alpha}$ as 
\begin{equation}
    \Sigma(\vec{\alpha}) := \sigma(\vec{\alpha},\vec{\theta})
\end{equation}
and
\begin{equation}
    S_+(\vec{\alpha}) := \bigcup_{\vec{\theta} \in \mathbb{T}^N}\sigma(\vec{\alpha},\vec{\theta})
\end{equation}
for $\vec{\alpha} \in \mathbb{T}^N$.

Theorem \ref{CpotentialContinuity} of Avron and Simon that we mentioned in Section \ref{supercritical} explicitly applies for multi-frequency discrete Schr\"{o}dinger operators. We first state their result in our notation, so we have the continuity of $S_+$ in the Hausdorff metric.

\begin{theorem}\label{CpotentialContinuityMultiFrequency}\emph{\textbf{(}\cite{AS83}, \textit{Theorem 3.7, Corollary 3.8}\textbf{)}}
    Let $v$ be a real-valued continuous function on the torus $\mathbb{T}^N = \mathbb{R}^N/\mathbb{Z}^N$ and 
    \begin{equation}
    S_+(\vec{\alpha}) = \bigcup_{\vec{\theta} \in \mathbb{T}^N}\sigma(\vec{\alpha},\vec{\theta}).
    \end{equation}
Then if $\vec{\alpha}_k \rightarrow \vec{\alpha}$, we have $E \in S_+(\vec{\alpha})$ if and only if there exists $E_{k} \in S_+(\vec{\alpha}_k)$ with $E_k \rightarrow E$. Moreover $$\big\{(E,\vec{\alpha}) ~|~ E \in S_+(\vec{\alpha})\big\}$$ is a closed set.
\end{theorem}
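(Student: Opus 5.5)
The plan is to work entirely with approximate eigenvectors (Weyl sequences) and the uniform continuity of $v$ on the compact torus $\mathbb{T}^N$. We never use rational independence here; the statement is about $S_+$ for arbitrary frequencies. Two elementary facts come first.

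\emph{Fact 1: the map $\vec\theta\mapsto H_{\vec\alpha,\vec\theta}$ is norm continuous, uniformly in $\vec\alpha$.} Indeed,
\begin{equation*}
\|H_{\vec\alpha,\vec\theta}-H_{\vec\alpha,\vec\theta'}\|\le \sup_{x}\big|v(x+\vec\theta)-v(x+\vec\theta')\big|,
\end{equation*}
and the right-hand side tends to $0$ as $\vec\theta'\to\vec\theta$. Consequently $\sigma(\vec\alpha,\vec\theta)$ varies continuously in the Hausdorff metric, and $S_+(\vec\alpha)$, a union over the compact set $\mathbb{T}^N$, is closed.

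\emph{Fact 2: localization of Weyl vectors.} There is $C$, depending only on the dimension-one structure of the Laplacian, such that for any $L$ the following holds. If $E\in\sigma(H_{\vec\alpha,\vec\theta})$, there is $\psi\ne0$ supported in an interval of length $L$ with
\begin{equation*}
\|(H_{\vec\alpha,\vec\theta}-E)\psi\|\le (C/L)\,\|\psi\|.
\end{equation*}
To prove it, take a Weyl vector $\varphi$ with $\|(H-E)\varphi\|\le\delta\|\varphi\|$. Choose translates $\chi_j$ of a cutoff of width $2L$ with $\sum_j\chi_j^2=1$ and discrete gradient at most $C/L$. The commutator $[H,\chi_j]$ involves only the hopping term, so it is bounded by $C/L$ on the support of $\chi_j$. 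Then
\begin{equation*}
\sum_j\|(H-E)\chi_j\varphi\|^2\le 2\delta^2\|\varphi\|^2+(C/L)^2\|\varphi\|^2=\Big(2\delta^2+(C/L)^2\Big)\sum_j\|\chi_j\varphi\|^2 .
\end{equation*}
By pigeonhole, some $\psi=\chi_j\varphi$ satisfies the bound after letting $\delta\to0$. By shifting $\vec\theta\mapsto\vec\theta+m\vec\alpha$, which does not change $S_+$, we may assume $\operatorname{supp}\psi\subset[0,L]$.

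\emph{Approximation direction.} Let $E\in\sigma(\vec\alpha,\vec\theta)$ and $\varepsilon>0$. Take $\psi$ from Fact 2 with $C/L<\varepsilon$ and support in $[0,L]$. For $0\le n\le L$ we have $|n(\vec\alpha_k-\vec\alpha)|\le L|\vec\alpha_k-\vec\alpha|\to0$. By uniform continuity of $v$, $\|(H_{\vec\alpha_k,\vec\theta}-H_{\vec\alpha,\vec\theta})\psi\|<\varepsilon\|\psi\|$ for large $k$. Hence $\operatorname{dist}(E,\sigma(\vec\alpha_k,\vec\theta))<2\varepsilon$, which produces $E_k\in S_+(\vec\alpha_k)$ with $E_k\to E$.

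\emph{Closedness of the graph, which also gives the converse implication.} Let $E_k\in\sigma(\vec\alpha_k,\vec\theta_k)$ with $E_k\to E$ and $\vec\alpha_k\to\vec\alpha$. Fix $L$ and apply Fact 2 to each $k$, shifting phases so that $\psi_k$ is supported in $[0,L]$. Normalize $\psi_k$; since they live in a fixed finite-dimensional space, pass to a subsequence with $\psi_k\to\psi$ and $\vec\theta_k\to\vec\theta_L$. On the fixed window $[0,L]$, uniform continuity of $v$ gives
\begin{equation*}
\|(H_{\vec\alpha,\vec\theta_L}-E)\psi\|=\lim_k\|(H_{\vec\alpha_k,\vec\theta_k}-E_k)\psi_k\|\le C/L .
\end{equation*}
So $\operatorname{dist}(E,\sigma(\vec\alpha,\vec\theta_L))\le C/L$. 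Now let $L\to\infty$ along a subsequence with $\vec\theta_L\to\vec\theta^*$. Fact 1 then yields $E\in\sigma(\vec\alpha,\vec\theta^*)\subset S_+(\vec\alpha)$.

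The main obstacle is Fact 2. Without it, Weyl vectors for $H_{\vec\alpha_k,\vec\theta_k}$ may have supports growing with $k$, and $|n(\vec\alpha_k-\vec\alpha)|$ would no longer be small on them. The localization constant must be uniform in the potential, which holds because only the bounded hopping term enters the commutator.
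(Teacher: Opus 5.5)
Your proof is correct. The paper gives no proof of this statement: it quotes Avron--Simon and uses the result only through its consequence, Hausdorff convergence of $S_+(\vec\alpha_n)$ to $\Sigma(\vec\alpha)$. Your localized-Weyl-vector argument is therefore a self-contained substitute for that citation, not a competing route. As you note, it never uses rational independence.

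\emph{What your route gives.} Fact 2 can be run quantitatively. Let $\omega_v$ be the modulus of continuity of $v$, and shift phases so that the localized vector sits in $[0,L]$. The computation in your approximation step then gives, for every $L$ and every $E\in S_+(\vec\alpha)$,
\[
\operatorname{dist}\big(E,S_+(\vec\alpha')\big)\le \frac{C}{L}+\omega_v\big(L|\vec\alpha-\vec\alpha'|\big),
\]
and this bound is symmetric in $\vec\alpha,\vec\alpha'$. Hence
\[
d_H\big(S_+(\vec\alpha),S_+(\vec\alpha')\big)\le\inf_{L}\Big\{\frac{C}{L}+\omega_v\big(L|\vec\alpha-\vec\alpha'|\big)\Big\}.
\]
This is exactly the Hausdorff continuity the paper relies on, now with a rate; for Lipschitz $v$, optimizing in $L$ gives $O(|\vec\alpha-\vec\alpha'|^{1/2})$.

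\emph{A simplification.} Your closed-graph step does not need the compactness extraction of $\psi_k$ and $\vec\theta_k$. Apply Fact 2 to $H_{\vec\alpha_k,\vec\theta_k}$, shift to a phase $\vec\theta_k'$ so that $\psi_k$ is supported in $[0,L]$, and compare directly with $H_{\vec\alpha,\vec\theta_k'}$. This gives
\[
\operatorname{dist}\big(E_k,S_+(\vec\alpha)\big)\le C/L+\omega_v\big(L|\vec\alpha_k-\vec\alpha|\big).
\]
Let $k\to\infty$, then $L\to\infty$, and use that $S_+(\vec\alpha)$ is closed (your Fact 1).

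\emph{Where Fact 2 matters.} Localization is not needed in the approximation direction: one fixed finitely supported Weyl vector already suffices there. It is essential only in the closed-graph direction, which you correctly identify as the main obstacle.

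\emph{Cosmetic points in Fact 2.} First, ``letting $\delta\to0$'' is not literally justified as written, because the selected index $j$, and hence $\psi$, depends on $\delta$. Just take $\delta=C/L$ and absorb the resulting factor into $C$. Second, cutoffs of width $2L$ give supports of length $2L$, not $L$. Third, the bound on $\sum_j\|[H,\chi_j]\varphi\|^2$ uses that each site lies in only boundedly many supports, which holds for translates at spacing $L$. All three are harmless.
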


Note that the proofs of our capacity convergence results in one-frequency setting, Theorem \ref{Schroedingerresultregular} and Theorem \ref{Schroedingerresultirregular}, are potential theoretic and require mainly three assumptions:

\begin{enumerate}
    \item convergence of $S_+(\alpha_k)$ to $\Sigma(\alpha)$ in the Hausdorff metric,
    \item the approximates $S_+(\alpha_k)$ are closed sets and have band-gap structure and
    \item the Lyapunov exponent of $H_{\alpha,\theta}$ is zero on the spectrum $\Sigma(\alpha)$. 
\end{enumerate}

After Theorem \ref{CpotentialContinuityMultiFrequency}, we have the first two properties for the multi-frequency operator $H_{\vec{\alpha},\vec{\theta}}$ if we allow the approximating frequencies to have rational entries. For the third property, we used $Cap(\Sigma(\alpha)) = 1$ through Theorem \ref{SimonCapacity}. However, Theorem \ref{SimonCapacity} is valid for the one-dimensional multi-frequency operators. Therefore, we can follow the proofs of Theorems \ref{Schroedingerresultregular} and \ref{Schroedingerresultirregular}, and get the following result.

\begin{theorem}\label{SchroedingerresultMultiFrequency}
    Let $\vec{\alpha} := (\alpha_1, \cdots, \alpha_N) \in \mathbb{T}^N$ such that $\{1,\alpha_1, \cdots, \alpha_N\}$ is rationally independent and 
    $$\vec{\alpha}_n := \bigg(\frac{p_{1,n}}{q_{1,n}},\frac{p_{2,n}}{q_{2,n}},\dots,\frac{p_{N,n}}{q_{N,n}}\bigg) \longrightarrow \vec{\alpha}$$ as $n \rightarrow \infty$, where $(p_{i,n},q_{i,n}) = 1$ for $1 \leq i \leq N$, $n \in \mathbb{N}$. If $H_{\vec{\alpha},\vec{\theta}}$ is a quasi-periodic Schr\"{o}dinger operator with a continuous potential $v$ and zero Lyapunov exponent on its spectrum $\Sigma(\vec{\alpha})$, then 
    \begin{equation}
        \lim_{n \rightarrow \infty} Cap\big(S_+(\vec{\alpha}_n)\big) = Cap\big(\Sigma(\vec{\alpha})\big).
    \end{equation}
\end{theorem}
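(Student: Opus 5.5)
The plan is to reproduce the two-sided argument of Theorems \ref{Schroedingerresultregular} and \ref{Schroedingerresultirregular}, now with the Hausdorff input supplied by Theorem \ref{CpotentialContinuityMultiFrequency}.

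\textbf{Upper bound.} By Theorem \ref{CpotentialContinuityMultiFrequency} and compactness (all spectra lie in $[-2-\|v\|_\infty,2+\|v\|_\infty]$), we will deduce
\[
d_H\big(S_+(\vec{\alpha}_n),\Sigma(\vec{\alpha})\big)\to 0 .
\]
Here we use that $S_+(\vec{\alpha})=\Sigma(\vec{\alpha})$ when $\{1,\alpha_1,\dots,\alpha_N\}$ is rationally independent. Set $\varepsilon_n$ equal to this distance. Replacing $\varepsilon_n$ by $\sup_{k\ge n}\varepsilon_k$ makes it monotone, and then
\[
S_+(\vec{\alpha}_n)\subseteq \Sigma_{\varepsilon_n}(\vec{\alpha}).
\]
The closed neighborhoods $\Sigma_{\varepsilon_n}(\vec{\alpha})$ are nested compact sets decreasing to $\Sigma(\vec{\alpha})$. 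Monotonicity of capacity together with \eqref{CapCompactConvergence} then gives
\[
\limsup_{n} Cap\big(S_+(\vec{\alpha}_n)\big)\le Cap\big(\Sigma(\vec{\alpha})\big).
\]
This route avoids any regularity assumption on $\Sigma(\vec{\alpha})$.

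\textbf{Lower bound.} Let $Q_n=\operatorname{lcm}(q_{1,n},\dots,q_{N,n})$. For each $\vec{\theta}$, the operator $H_{\vec{\alpha}_n,\vec{\theta}}$ is $Q_n$-periodic. Floquet theory then represents $\sigma(\vec{\alpha}_n,\vec{\theta})$ as $\Delta^{-1}([-2,2])$, where $\Delta$ is the discriminant: a monic real polynomial of degree $Q_n$ with the alternation properties of Proposition \ref{PropertiesofDiscriminant}. That proposition was stated for one frequency, but its proof only uses periodicity, so it transfers verbatim. Hence $\Delta$ is the $Q_n$-th Chebyshev polynomial of this band set, with norm $2$ and the full number of extreme points. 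Theorem \ref{TotikPeherstorfer} then gives
\[
Cap\big(\sigma(\vec{\alpha}_n,\vec{\theta})\big)=1.
\]
By monotonicity, $Cap\big(S_+(\vec{\alpha}_n)\big)\ge 1$.

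On the other side, zero Lyapunov exponent on $\Sigma(\vec{\alpha})$ forces $Cap\big(\Sigma(\vec{\alpha})\big)=1$. I would justify this through Simon's theorem (Theorem \ref{SimonCapacity}) in its general ergodic form \cite{Sim07}, which applies to the ergodic family indexed by $\vec{\theta}\in\mathbb{T}^N$ with the minimal translation by $\vec{\alpha}$. If one wants to avoid citing the general version, the Thouless formula gives the same conclusion directly. Indeed, $L=-U^{dk}$, and $L$ vanishes on the support, so
\[
I(dk)=0 \quad\text{and}\quad Cap\big(\Sigma(\vec{\alpha})\big)\ge 1.
\]
Conversely, $L$ is nonnegative and has asymptotics $\log|E|+o(1)$ at infinity. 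Comparing it with the Green function via the maximum principle gives $Cap\big(\Sigma(\vec{\alpha})\big)\le 1$. Thus $\liminf_n Cap\big(S_+(\vec{\alpha}_n)\big)\ge 1=Cap\big(\Sigma(\vec{\alpha})\big)$, and combining the two bounds proves the theorem.

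\textbf{Main obstacle.} The main obstacle I expect is confirming the multi-frequency inputs, namely the Thouless formula and Simon's capacity identity for $\mathbb{T}^N$ dynamics. Both hold for general ergodic Schrödinger families, so this should reduce to a citation check. The periodic Chebyshev step is routine.
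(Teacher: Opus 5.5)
Your proposal is correct and is essentially the paper's argument. The upper bound goes through the nested closed $\varepsilon_n$-neighborhoods of $\Sigma(\vec{\alpha})$, exactly as in Theorem \ref{Schroedingerresultirregular}. The lower bound uses the discriminant as the Chebyshev polynomial of each periodic spectrum, giving capacity $1$, together with Simon's theorem for $Cap(\Sigma(\vec{\alpha}))=1$; your explicit period $Q_n=\operatorname{lcm}(q_{1,n},\dots,q_{N,n})$ fills in a detail the paper leaves implicit.

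One slip is in your optional Thouless-formula aside. The facts you invoke there, $I(dk)=0$ and $L\ge 0$ compared against $G_{\Sigma(\vec{\alpha})}$, both yield only $Cap(\Sigma(\vec{\alpha}))\ge 1$. The direction your lower bound actually needs is $Cap(\Sigma(\vec{\alpha}))\le 1$. That follows from the maximum principle applied to $L-G_{\Sigma(\vec{\alpha})}$, using that $L$ is upper semicontinuous and vanishes on $\Sigma(\vec{\alpha})$, while $G_{\Sigma(\vec{\alpha})}\ge 0$.
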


The next results are the multi-frequency analogs of Theorem \ref{HarmonicMeasureRegularSchroedingerresult2} and Theorem \ref{DOSEquilibriumMeasureRegularSchroedingerresult2}

\begin{theorem}\label{HarmonicMeasureSchroedingerresultMultiFrequency}
Let $\vec{\alpha} \in \mathbb{T}^N$ such that $\{1,\alpha_1, \cdots, \alpha_N\}$ is rationally independent and 
    $$\vec{\alpha}_n := \bigg(\frac{p_{1,n}}{q_{1,n}},\frac{p_{2,n}}{q_{2,n}},\dots,\frac{p_{N,n}}{q_{N,n}}\bigg) \rightarrow \vec{\alpha}$$ as $n \rightarrow \infty$, where $(p_{i,n},q_{i,n}) = 1$ for $1 \leq i \leq N$, $n \in \mathbb{N}$. If $H_{\vec{\alpha},\vec{\theta}}$ is a quasi-periodic Schr\"{o}dinger operator with a continuous potential $v$ and zero Lyapunov exponent on its spectrum $\Sigma(\vec{\alpha})$, then for all $z \in \Omega_{\vec{\alpha}}$ we have
    \begin{equation}
        \omega_{\Omega_{\vec{\alpha}_n}}(z,\cdot) \rightarrow \omega_{\Omega_{\vec{\alpha}}}(z,\cdot)
     \end{equation}
    in the weak*-topology as $n \rightarrow \infty$, where
     $$\Omega_{\vec{\alpha}_n} := \overline{\mathbb{C}} \setminus S_+\big(\vec{\alpha}_n\big) \qquad \text{and} \qquad \Omega_{\vec{\alpha}} := \overline{\mathbb{C}}\setminus\Sigma(\vec{\alpha}).$$ 
\end{theorem}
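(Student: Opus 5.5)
The plan is to deduce the statement directly from Totik's theorem (Theorem \ref{Totikharmonicmeasure}), applied with $K = \Sigma(\vec{\alpha})$ and $K_n = S_+(\vec{\alpha}_n)$. This is exactly how Theorem \ref{HarmonicMeasureRegularSchroedingerresult2} was obtained from Theorem \ref{Schroedingerresultirregular} in the one-frequency case. Totik's theorem requires three hypotheses: $Cap(K)>0$, $d_H(K_n,K)\to 0$, and $Cap(K_n)\to Cap(K)$. I will verify them in that order.

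\emph{Non-polarity.} Since the Lyapunov exponent vanishes on $\Sigma(\vec{\alpha})$, it vanishes in particular $\mu_{\Sigma(\vec{\alpha})}$-a.e. Simon's theorem (Theorem \ref{SimonCapacity}) holds for ergodic families, and in particular for multi-frequency operators with $\{1,\alpha_1,\dots,\alpha_N\}$ rationally independent, as noted before Theorem \ref{SchroedingerresultMultiFrequency}. It gives $Cap(\Sigma(\vec{\alpha}))=1>0$, so $K$ is non-polar.

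\emph{Hausdorff convergence.} This follows from Theorem \ref{CpotentialContinuityMultiFrequency}, using that $S_+(\vec{\alpha})=\Sigma(\vec{\alpha})$ for rationally independent $\vec{\alpha}$. First, every $E\in\Sigma(\vec{\alpha})$ is a limit of points $E_n\in S_+(\vec{\alpha}_n)$. Second, closedness of $\{(E,\vec{\alpha}) \mid E\in S_+(\vec{\alpha})\}$ implies that limit points of sequences $E_n\in S_+(\vec{\alpha}_n)$ lie in $\Sigma(\vec{\alpha})$. All these sets lie in the fixed compact interval $[-2-\|v\|_\infty,\,2+\|v\|_\infty]$. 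A standard compactness argument then turns the two pointwise statements into $d_H(S_+(\vec{\alpha}_n),\Sigma(\vec{\alpha}))\to 0$: arguing by contradiction, any uniform failure would produce a subsequence violating one of them.

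\emph{Capacity convergence.} This is Theorem \ref{SchroedingerresultMultiFrequency}. Applying Theorem \ref{Totikharmonicmeasure} then gives weak$^*$ convergence $\omega_{\Omega_{K_n}}(z,\cdot)\to\omega_{\Omega_K}(z,\cdot)$ for all $z\in\Omega_K$.

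The remaining point is to identify these harmonic measures with the ones in the statement. All the sets are compact subsets of $\mathbb{R}$, so their complements in $\overline{\mathbb{C}}$ are connected. Hence $\Omega_K=\Omega_{\vec{\alpha}}$ and $\Omega_{K_n}=\Omega_{\vec{\alpha}_n}$. For $z\in\Omega_{\vec{\alpha}}$, the measure $\omega_{\Omega_{\vec{\alpha}_n}}(z,\cdot)$ is defined as soon as $z\notin S_+(\vec{\alpha}_n)$. This holds for all large $n$ by the Hausdorff convergence, since $\operatorname{dist}(z,\Sigma(\vec{\alpha}))>0$ (trivially when $z=\infty$).

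The only step with real content is the capacity convergence, and it is supplied by Theorem \ref{SchroedingerresultMultiFrequency}. If one doubts that theorem, the item to check is its lower bound. For each $\vec{\theta}$, the rational-frequency operator $H_{\vec{\alpha}_n,\vec{\theta}}$ is periodic with period $q_n=\operatorname{lcm}(q_{1,n},\dots,q_{N,n})$. Proposition \ref{PropertiesofDiscriminant} and Theorem \ref{TotikPeherstorfer} then give $Cap(\sigma(\vec{\alpha}_n,\vec{\theta}))=1$, and monotonicity yields $Cap(S_+(\vec{\alpha}_n))\ge 1=Cap(\Sigma(\vec{\alpha}))$. The matching upper bound comes from the nested $\varepsilon_n$-neighborhood argument of Theorem \ref{Schroedingerresultirregular}.
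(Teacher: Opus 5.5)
Your proposal is correct and follows the paper's route. The paper obtains this theorem, as in the one-frequency case, by feeding three inputs into Totik's Theorem \ref{Totikharmonicmeasure}: the Avron--Simon Hausdorff convergence (Theorem \ref{CpotentialContinuityMultiFrequency}), the capacity convergence of Theorem \ref{SchroedingerresultMultiFrequency}, and $Cap(\Sigma(\vec{\alpha}))=1>0$ from Simon's theorem. Your extra details fill in steps the paper leaves implicit: the compactness argument turning the pointwise Avron--Simon statement into $d_H\to 0$, the check that $z\notin S_+(\vec{\alpha}_n)$ for large $n$, and the use of the period $\operatorname{lcm}(q_{1,n},\dots,q_{N,n})$ in the lower bound.
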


\begin{theorem}\label{DOSEquilibriumMeasureSchroedingerresultMultiFrequency}
Let $\vec{\alpha} \in \mathbb{T}^N$ such that $\{1,\alpha_1, \cdots, \alpha_N\}$ is rationally independent and 
    $$\vec{\alpha}_n := \bigg(\frac{p_{1,n}}{q_{1,n}},\frac{p_{2,n}}{q_{2,n}},\dots,\frac{p_{N,n}}{q_{N,n}}\bigg) \rightarrow \vec{\alpha}$$ as $n \rightarrow \infty$, where $(p_{i,n},q_{i,n}) = 1$ for $1 \leq i \leq N$, $n \in \mathbb{N}$. If $H_{\vec{\alpha},\vec{\theta}}$ is a quasi-periodic Schr\"{o}dinger operator with a continuous potential $v$ and zero Lyapunov exponent on its spectrum $\Sigma(\vec{\alpha})$, then
    \begin{equation}
        \mu_{S_+(\vec{\alpha}_n)} \rightarrow dk_{\vec{\alpha}}
     \end{equation}
    in the weak*-topology as $n \rightarrow \infty$, where $\mu_{S_+(\vec{\alpha}_n)}$ is the equilibrium measure of $S_+(\vec{\alpha}_n)$ and $dk_{\vec{\alpha}}$ is the density of states measure of $H_{\vec{\alpha},\vec{\theta}}$.
\end{theorem}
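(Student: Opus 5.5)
The plan is to reduce this statement to Theorem \ref{HarmonicMeasureSchroedingerresultMultiFrequency} evaluated at the point $z=\infty$, and then to identify the limiting equilibrium measure with the density of states. Two things have to be checked: that $S_+(\vec{\alpha}_n)$ and $\Sigma(\vec{\alpha})$ fit the framework of Totik's Theorem \ref{Totikharmonicmeasure}, and that $\mu_{\Sigma(\vec{\alpha})}=dk_{\vec{\alpha}}$.

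\textbf{Step 1.} Since all sets involved are compact subsets of $\mathbb{R}$, the domains $\Omega_{\vec{\alpha}_n}$ and $\Omega_{\vec{\alpha}}$ are the full complements in $\overline{\mathbb{C}}$. In particular $\infty\in\Omega_{\vec{\alpha}}$. By Theorem 4.3.14 of \cite{Ran95}, $\omega_{\Omega_K}(\infty,\cdot)=\mu_K$ for non-polar compact $K$.

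\textbf{Step 2.} Non-polarity holds for every set in question. Each $S_+(\vec{\alpha}_n)$ contains a single periodic spectrum $\sigma(\vec{\alpha}_n,\vec{\theta})$, which is a finite union of nondegenerate bands. For $\Sigma(\vec{\alpha})$, zero Lyapunov exponent together with Simon's Theorem \ref{SimonCapacity} (valid for one-dimensional multi-frequency ergodic operators) gives $Cap(\Sigma(\vec{\alpha}))=1>0$.

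\textbf{Step 3.} Theorem \ref{CpotentialContinuityMultiFrequency} gives Hausdorff convergence. Theorem \ref{SchroedingerresultMultiFrequency} gives $Cap(S_+(\vec{\alpha}_n))\to Cap(\Sigma(\vec{\alpha}))$. Theorem \ref{Totikharmonicmeasure} with $z=\infty$ then yields $\mu_{S_+(\vec{\alpha}_n)}\to\mu_{\Sigma(\vec{\alpha})}$ weak*. Equivalently, this is Theorem \ref{HarmonicMeasureSchroedingerresultMultiFrequency} at $z=\infty$.

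For Hausdorff convergence, note that for rationally independent $\{1,\alpha_1,\dots,\alpha_N\}$ one has $S_+(\vec{\alpha})=\Sigma(\vec{\alpha})$. The two-sided characterization in Theorem \ref{CpotentialContinuityMultiFrequency}, combined with uniform boundedness of all the spectra in $[-2-\|v\|_\infty,\,2+\|v\|_\infty]$, gives $d_H(S_+(\vec{\alpha}_n),\Sigma(\vec{\alpha}))\to0$ by a standard compactness argument.

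\textbf{Step 4.} It remains to identify $\mu_{\Sigma(\vec{\alpha})}$ with $dk_{\vec{\alpha}}$. The Thouless formula holds for the multi-frequency ergodic family:
\[
L(E)=\int\log|x-E|\,dk_{\vec{\alpha}}(x) = -U^{dk_{\vec{\alpha}}}(E),
\]
and $\operatorname{supp} dk_{\vec{\alpha}}=\Sigma(\vec{\alpha})$. Since $L=0$ on $\Sigma(\vec{\alpha})$, we get $U^{dk_{\vec{\alpha}}}=0$ there, and hence $I(dk_{\vec{\alpha}})=\int U^{dk_{\vec{\alpha}}}\,dk_{\vec{\alpha}}=0$.

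Because $Cap(\Sigma(\vec{\alpha}))=1$, we also have $I(\mu_{\Sigma(\vec{\alpha})})=0$. So $dk_{\vec{\alpha}}$ is a probability measure on $\Sigma(\vec{\alpha})$ with minimal energy. Uniqueness of the energy minimizer (strict convexity of $I$ on $M(K)$ for compact $K\subset\mathbb{C}$ of positive capacity) gives $dk_{\vec{\alpha}}=\mu_{\Sigma(\vec{\alpha})}$.

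\textbf{Expected obstacle.} The main difficulty is not Totik's step but justifying the multi-frequency inputs: the Thouless formula, the support of $dk_{\vec{\alpha}}$, and Simon's capacity theorem for the ergodic base $(\mathbb{T}^N,\vec{\theta}\mapsto\vec{\theta}+\vec{\alpha})$. All three are standard for general ergodic Schr\"odinger families (see \cite{CFKS87,Sim07}), and the plan is to cite them in that generality.
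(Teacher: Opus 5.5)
Your proposal is correct and follows essentially the same route as the paper: Theorem \ref{Totikharmonicmeasure} applied at $z=\infty$ gives $\mu_{S_+(\vec{\alpha}_n)}\to\mu_{\Sigma(\vec{\alpha})}$, with the Avron--Simon Hausdorff convergence and the capacity convergence of Theorem \ref{SchroedingerresultMultiFrequency} as inputs, and then one identifies $\mu_{\Sigma(\vec{\alpha})}=dk_{\vec{\alpha}}$ in the zero Lyapunov exponent regime. Your Step 4 makes explicit what the paper only calls ``well known'': the Thouless formula gives $I(dk_{\vec{\alpha}})=0$, which together with $Cap(\Sigma(\vec{\alpha}))=1$ and uniqueness of the energy minimizer forces $dk_{\vec{\alpha}}=\mu_{\Sigma(\vec{\alpha})}$.
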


\section{Ergodic Schr\"{o}dinger operators}\label{Ergodic}

In this section, we formulate our convergence theorems for the general setting of ergodic Schr\"{o}dinger operators approximated by families of periodic Schr\"{o}dinger operators. For quasi-periodic operators, we consider our convergence results through the approximation of the irrational frequency by a sequence of rational frequencies. For ergodic operators, we obtain similar convergence theorems in a broader setting by assuming the union spectra of a family of periodic operators converges to the almost sure spectrum of an ergodic operator in the Hausdorff metric. 

Let us recall how we define an ergodic Schr\"{o}dinger operator. If $(\Omega,d\mu)$ is a probability measure space and $T : \Omega \rightarrow \Omega$ is an ergodic transformation, let
\begin{equation}
    (H_{\omega}\psi)_n = \psi_{n+1} + \psi_{n-1} + g(T^n\omega)\psi_n,
\end{equation}
where $g$ is a bounded, measurable function from $\Omega$ to $\mathbb{R}$. Then $\{H_{\omega}\}_{\omega \in \Omega}$ defines an ergodic family of Schr\"{o}dinger operators.

As we discussed in the previous section on the multi-frequency case, we mainly need the following conditions in our results so far:

\begin{enumerate}
    \item convergence of the union spectra of the periodic operators to the spectrum of the quasi-periodic spectrum in the Hausdorff metric,
    \item the approximating union spectra to be a closed set and have band-gap structure and
    \item the Lyapunov exponent of the quasi-periodic operator to be zero on its spectrum. 
\end{enumerate}

In addition to these conditions, we also use Theorem \ref{SimonCapacity}. However, that result was given for ergodic operators. Therefore, assuming the Hausdorff metric convergence of the union spectra and applying the same methods that we discussed in Sections \ref{supercritical} and \ref{HarmonicMeasures}, we get the following theorems.

\begin{theorem}\label{SchroedingerresultErgodic}
Let $\{H_{\omega}\}_{\omega \in \Omega}$ be a family of ergodic Schr\"{o}dinger operators. Also let $\{\{H_{\omega,n}\}_{\omega \in W}\}_{n}$ be a sequence of families of periodic Schr\"{o}dinger operators for $W \subseteq \Omega$ such that
\begin{itemize}
    \item $\Sigma$ denotes the almost sure spectrum of $H_{\omega}$,
    \item $S_+(n) :=  \bigcup_{\omega \in W} \sigma(H_{\omega,n}) $, where $\sigma(H_{\omega,n})$ is the spectrum of $H_{\omega,n}$,
    \item The Lyapunov exponent of $H_{\omega}$ is zero on $\Sigma$, and
    \item $S_+(n)$ converges to $\Sigma$ in Hausdorff metric as $n \rightarrow \infty$.
\end{itemize}
Then
    \begin{equation}
        \lim_{n \rightarrow \infty} Cap\big(S_+(n)\big) = Cap\big(\Sigma\big).
    \end{equation}
\end{theorem}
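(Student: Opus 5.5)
The plan is to follow the two-sided scheme of Theorems \ref{Schroedingerresultregular} and \ref{Schroedingerresultirregular}. The upper bound uses only the Hausdorff convergence and general potential theory. The lower bound uses only the periodic (band) structure of each $\sigma(H_{\omega,n})$ together with the zero Lyapunov exponent hypothesis.

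\emph{Upper bound.} Set $\varepsilon_n := d_H(S_+(n),\Sigma)$, which tends to $0$. Passing to $\tilde\varepsilon_n := \sup_{k\ge n}\varepsilon_k$ if necessary, we may assume $\varepsilon_n$ is non-increasing. Let $\Sigma_{\varepsilon_n}$ be the closed $\varepsilon_n$-neighborhood of $\Sigma$ in $\mathbb{C}$. These sets are compact and nested, their intersection is $\Sigma$, and $S_+(n)\subseteq\Sigma_{\varepsilon_n}$ by the definition of the Hausdorff distance. Monotonicity of capacity and the continuity property \eqref{CapCompactConvergence} for decreasing compact sets then give
\[
\limsup_{n\to\infty} Cap\big(S_+(n)\big)\le \lim_{n\to\infty} Cap(\Sigma_{\varepsilon_n}) = Cap(\Sigma).
\]
No regularity of $\Sigma$ is needed here, so there is no need for the Green's function argument of the regular case. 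I would also note that $S_+(n)$ is bounded, since the potentials are uniformly bounded, and that it is closed once we take its closure; replacing $S_+(n)$ by its closure changes neither the capacity of a bounded set contained in $\Sigma_{\varepsilon_n}$ nor the Hausdorff distance.

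\emph{Lower bound.} Fix $n$ and any $\omega\in W$, and let $q$ be the period of $H_{\omega,n}$. By Floquet theory, as in Proposition \ref{PropertiesofDiscriminant}, $\sigma(H_{\omega,n}) = \Delta^{-1}([-2,2])$, where the discriminant $\Delta$ (the trace of the one-period transfer matrix) is a monic real polynomial of degree $q$. By the alternation theorem $\Delta$ is the Chebyshev polynomial of this set, and it has $q+l$ extreme points. Theorem \ref{TotikPeherstorfer} therefore gives $Cap(\sigma(H_{\omega,n}))^q = 2/2 = 1$. On the limiting side, the zero Lyapunov exponent on $\Sigma$ gives condition (a) of Theorem \ref{SimonCapacity}, which is formulated by Simon for ergodic operators, so $Cap(\Sigma)=1$. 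Monotonicity then yields
\[
Cap\big(S_+(n)\big)\ge Cap\big(\sigma(H_{\omega,n})\big)=1=Cap(\Sigma)
\]
for every $n$. Combining this with the upper bound gives the limit.

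The main obstacle is the lower bound in this generality. The identification $Cap(\sigma(H_{\omega,n}))=1$ requires that $\Delta$ genuinely equal the $q$-th Chebyshev polynomial with norm exactly $2$ on its band set. That in turn rests on the band-gap structure of Proposition \ref{PropertiesofDiscriminant}: real simple roots, local extrema of absolute value at least $2$, and $q$ non-overlapping bands. This structure holds for any periodic Jacobi/Schr\"odinger operator with off-diagonal entries $1$. I would verify it carefully, and alternatively get the same identity directly from the fact that the equilibrium measure of $\Delta^{-1}([-2,2])$ is the pullback of the arcsine measure on $[-2,2]$, whose capacity is $1$, under a degree-$q$ polynomial, so that $Cap = (1)^{1/q}=1$. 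Applying Simon's theorem also needs $W\neq\emptyset$ and a non-polar $\Sigma$; both follow because $\Sigma$ is the support of the density of states measure.
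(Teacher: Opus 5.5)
Your proposal is correct and is essentially the paper's argument. The upper bound uses nested closed $\varepsilon_n$-neighborhoods and continuity of capacity on decreasing compacta, as in Theorem \ref{Schroedingerresultirregular}; the lower bound combines $Cap(\sigma(H_{\omega,n}))=1$ from the monic degree-$q$ discriminant with $Cap(\Sigma)=1$ from Simon's theorem. Your replacement $\tilde\varepsilon_n=\sup_{k\ge n}\varepsilon_k$ neatly justifies the paper's ``without loss of generality.'' The only quibbles are that $W\neq\emptyset$ comes from Hausdorff convergence to the nonempty $\Sigma$, and that no closure is needed (and capacity can change under closure in general): $Cap$ is defined for arbitrary sets and both bounds apply to $S_+(n)$ directly.
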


\begin{theorem}\label{HarmonicMeasureSchroedingerresultErgodic}
Let $\{H_{\omega}\}_{\omega \in \Omega}$ be a family of ergodic Schr\"{o}dinger operators. Also let $\{\{H_{\omega,n}\}_{\omega \in W}\}_{n}$ be a sequence of families of periodic Schr\"{o}dinger operators for $W \subseteq \Omega$ such that
\begin{itemize}
    \item $\Sigma$ denotes the almost sure spectrum of $H_{\omega}$,
    \item $S_+(n) :=  \bigcup_{\omega \in W} \sigma(H_{\omega,n}) $, where $\sigma(H_{\omega,n})$ is the spectrum of $H_{\omega,n}$,
    \item The Lyapunov exponent of $H_{\omega}$ is zero on $\Sigma$, and
    \item $S_+(n)$ converges to $\Sigma$ in Hausdorff metric as $n \rightarrow \infty$.
\end{itemize}
If $\omega_D$ denotes the harmonic measure for the domain $D \subset \overline{\mathbb{C}}$, then for $z \notin \Sigma$
    \begin{equation}
        \omega_{\overline{\mathbb{C}} \setminus S_+(n)}(z,\cdot) \rightarrow \omega_{\overline{\mathbb{C}} \setminus \Sigma}(z,\cdot)
     \end{equation}
     in the weak*-topology as $n \rightarrow \infty$.
\end{theorem}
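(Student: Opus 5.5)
The plan is to obtain this statement as a direct consequence of Totik's theorem (Theorem \ref{Totikharmonicmeasure}), with $K=\Sigma$ and $K_n = S_+(n)$. That theorem has three hypotheses, and I would verify them in turn.

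The first two hypotheses come for free. The sets are compact: $\Sigma$ is the spectrum of a bounded self-adjoint operator, hence a compact subset of $\mathbb{R}$. Each $S_+(n)$ is compact as well, since the approximation is assumed in the Hausdorff metric on compact sets; alternatively, it is a bounded union of band spectra whose closure we may use, and Hausdorff convergence is unaffected. Hausdorff convergence $d_H(S_+(n),\Sigma)\to 0$ is assumed outright in the statement.

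The remaining hypotheses are non-polarity of $\Sigma$ and $Cap(S_+(n))\to Cap(\Sigma)$. For non-polarity, the zero Lyapunov exponent assumption together with Simon's result (Theorem \ref{SimonCapacity}, stated there for ergodic families) gives $Cap(\Sigma)=1>0$. For capacity convergence, I would invoke Theorem \ref{SchroedingerresultErgodic}, proved exactly as in Theorem \ref{Schroedingerresultirregular}, in two directions.
\begin{enumerate}
\item \emph{Upper bound.} Take the closed $\varepsilon_n$-neighborhoods $\Sigma_{\varepsilon_n}$ with $\varepsilon_n = d_H(S_+(n),\Sigma)$, made monotone if needed. These contain $S_+(n)$ and decrease to $\Sigma$, so continuity of capacity along nested compact sets yields $\limsup Cap(S_+(n))\le Cap(\Sigma)$.
\item \emph{Lower bound.} Each periodic $H_{\omega,n}$ has spectrum equal to the preimage of $[-2,2]$ under its discriminant, which is a monic polynomial of degree equal to the period. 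Theorem \ref{TotikPeherstorfer} then gives $Cap(\sigma(H_{\omega,n}))=1$, so monotonicity yields $Cap(S_+(n))\ge 1 = Cap(\Sigma)$.
\end{enumerate}

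With all three hypotheses of Theorem \ref{Totikharmonicmeasure} in place, it gives weak* convergence of $\omega_{\Omega_{S_+(n)}}(z,\cdot)$ to $\omega_{\Omega_\Sigma}(z,\cdot)$ for every $z\in\Omega_\Sigma$. Since $\Sigma$ and $S_+(n)$ lie in $\mathbb{R}$, their complements in $\overline{\mathbb{C}}$ are connected. Hence $\Omega_\Sigma=\overline{\mathbb{C}}\setminus\Sigma$, $\Omega_{S_+(n)}=\overline{\mathbb{C}}\setminus S_+(n)$, and $z\notin\Sigma$ is exactly the condition $z\in\Omega_\Sigma$.

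One point needs care. For $z\notin\Sigma$ we have $z\notin S_+(n)$ for all large $n$ by Hausdorff convergence, since $\operatorname{dist}(z,\Sigma)>0$. So $\omega_{\overline{\mathbb{C}}\setminus S_+(n)}(z,\cdot)$ is defined only eventually, and the convergence is meant in that sense.

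The main obstacle is the lower bound in the capacity step. It requires that the periodic approximants be genuine discrete Schr\"odinger operators, with off-diagonal entries equal to $1$, so that the discriminant normalization gives capacity exactly $1$. If the periodic families were allowed to be more general Jacobi matrices, I would instead need to compare the product of off-diagonal entries with the almost sure value. In the stated Schr\"odinger setting the bound holds verbatim.
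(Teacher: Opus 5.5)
Your proposal is correct and follows the paper's own route. The paper obtains this theorem by rerunning the capacity argument of Theorem \ref{Schroedingerresultirregular} in the ergodic setting: shrinking closed $\varepsilon_n$-neighborhoods give the upper bound, and $Cap(\sigma(H_{\omega,n}))=1$ from the discriminant, combined with Simon's $Cap(\Sigma)=1$, gives the lower bound. It then applies Totik's Theorem \ref{Totikharmonicmeasure}, exactly as you do; your remarks on closures of $S_+(n)$ and on $z\notin S_+(n)$ for large $n$ only make explicit what the paper leaves implicit.
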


\begin{theorem}\label{DOSEquilibriumMeasureSchroedingerresultErgodic}
Let $\{H_{\omega}\}_{\omega \in \Omega}$ be a family of ergodic Schr\"{o}dinger operators. Also let $\{\{H_{\omega,n}\}_{\omega \in W}\}_{n}$ be a sequence of families of periodic Schr\"{o}dinger operators for $W \subseteq \Omega$ such that
\begin{itemize}
    \item $\Sigma$ denotes the almost sure spectrum of $H_{\omega}$,
    \item $S_+(n) :=  \bigcup_{\omega \in W} \sigma(H_{\omega,n}) $, where $\sigma(H_{\omega,n})$ is the spectrum of $H_{\omega,n}$,
    \item The Lyapunov exponent of $H_{\omega}$ is zero on $\Sigma$, and
    \item $S_+(n)$ converges to $\Sigma$ in Hausdorff metric as $n \rightarrow \infty$.
\end{itemize}
Then
    \begin{equation}
        \mu_{S_+(n)} \rightarrow dk
     \end{equation}
    in the weak*-topology as $n \rightarrow \infty$, where $\mu_{S_+(n)}$ is the equilibrium measure of $S_+(n)$ and $dk$ is the density of states measure of $H_{\omega}$.
\end{theorem}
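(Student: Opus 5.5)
The plan is to reduce Theorem \ref{DOSEquilibriumMeasureSchroedingerresultErgodic} to two facts. First, Theorem \ref{HarmonicMeasureSchroedingerresultErgodic} evaluated at $z=\infty$. Second, the identification of the density of states measure with the equilibrium measure of $\Sigma$ in the zero Lyapunov exponent regime.

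\textbf{Step 1 (harmonic measure at infinity).} Both $S_+(n)$ and $\Sigma$ are compact subsets of $\mathbb{R}$. Hence $\overline{\mathbb{C}}\setminus S_+(n)$ and $\overline{\mathbb{C}}\setminus\Sigma$ are connected, contain $\infty$, and coincide with the unbounded components $\Omega_{S_+(n)}$ and $\Omega_\Sigma$. The point $z=\infty$ does not lie in $\Sigma$. Theorem \ref{HarmonicMeasureSchroedingerresultErgodic}, which rests on Theorem \ref{SchroedingerresultErgodic} together with Totik's Theorem \ref{Totikharmonicmeasure}, therefore gives
\[
\omega_{\overline{\mathbb{C}}\setminus S_+(n)}(\infty,\cdot)\to\omega_{\overline{\mathbb{C}}\setminus\Sigma}(\infty,\cdot)
\]
weak$^*$. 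We must check that Theorem \ref{Totikharmonicmeasure} allows $z=\infty\in\Omega_K$. It does: the harmonic measure is defined with $h$ harmonic including at $\infty$. We also need $Cap(\Sigma)>0$. This follows from Theorem \ref{SimonCapacity}: zero Lyapunov exponent on $\Sigma$ gives $Cap(\Sigma)=1$. By the identification $\mu_K=\omega_{\Omega_K}(\infty,\cdot)$ (Theorem 4.3.14 in \cite{Ran95}), this becomes $\mu_{S_+(n)}\to\mu_\Sigma$ weak$^*$.

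\textbf{Step 2 ($dk=\mu_\Sigma$).} The Thouless formula \eqref{ThoulessFormula} in the ergodic setting reads $L(E)=-U^{dk}(E)$. We will use the following facts:
\begin{itemize}
\item $\operatorname{supp} dk=\Sigma$;
\item $L=0$ on $\Sigma$, so $U^{dk}=0$ on $\Sigma$, and integrating against $dk$ gives $I(dk)=0$;
\item $Cap(\Sigma)=1$, so the minimal energy over $M(\Sigma)$ is $-\log Cap(\Sigma)=0$.
\end{itemize}
Thus $dk\in M(\Sigma)$ attains the minimal energy. Uniqueness of the equilibrium measure for non-polar compact sets then forces $dk=\mu_\Sigma$. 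Combining with Step 1 yields $\mu_{S_+(n)}\to dk$.

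\textbf{Main obstacle.} The delicate point is the justification, in full ergodic generality, of the ingredients in Step 2: the Thouless formula, $\operatorname{supp}dk=\Sigma$, and the validity of Simon's theorem for a general ergodic family. The paper already invokes these in the ergodic setting, via \cite{Sim07} and \cite{CFKS87}, so we take them as given. Should one prefer to avoid uniqueness of minimizers, an alternative is to note that $U^{dk}=0=-\log Cap(\Sigma)$ on $\Sigma$ and appeal to the characterization of $\mu_\Sigma$ by constancy of its potential on the support. Either route finishes the proof.
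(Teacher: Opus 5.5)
Your proposal is correct and follows the paper's route: Theorem~\ref{HarmonicMeasureSchroedingerresultErgodic} (capacity convergence plus Totik's theorem) evaluated at $z=\infty$, then $\mu_K=\omega_{\Omega_K}(\infty,\cdot)$, then $dk=\mu_\Sigma$ in the zero Lyapunov exponent regime. The paper simply calls $dk=\mu_\Sigma$ ``well known,'' while you prove it via the Thouless formula, $I(dk)=0=-\log Cap(\Sigma)$, and uniqueness of the energy minimizer, which is sound; like the paper, you tacitly assume $S_+(n)$ is compact, which the statement needs anyway for $\mu_{S_+(n)}$ and Totik's theorem to make sense.
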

An immediate application is a general result for almost periodic potentials. 

Let $V\in \ell^\infty(\mathbb Z,\mathbb R)$ be a uniformly almost periodic potential, and let $\Omega$ be its hull, taken in the $\ell^\infty$-topology, under the shift. For $W\in\Omega$, let
\[
(H_W\psi)_n=\psi_{n+1}+\psi_{n-1}+W(n)\psi_n .
\]
Then by minimality, the spectrum of $H_W$ is independent of $W\in\Omega$; denote this common spectrum by $\Sigma$.

Let $q_j\to\infty$ be a sequence of almost periods such that
\[
\varepsilon_j
:=
\sup_{W\in\Omega}\sup_{n\in\mathbb Z}
|W(n+q_j)-W(n)|
\longrightarrow 0.
\]
For each $W\in\Omega$, let $W^{(j)}$ be the $q_j$-periodic potential obtained by repeating the block
\[
W(0),W(1),\ldots,W(q_j-1),
\]
and define
\[
S_j^{\rm ap}
=
\overline{\bigcup_{W\in\Omega}\sigma(\Delta+W^{(j)})}.
\]

\begin{theorem}
Under the above notation,
\[
d_{\rm H}(S_j^{\rm ap},\Sigma)\longrightarrow 0.
\]

Consequently, if the Lyapunov exponent of the almost periodic family on the hull vanishes on $\Sigma$, then the conclusions of Theorem~\ref{SchroedingerresultErgodic} apply to $S_j^{\rm ap}$. In particular,
\[
\operatorname{Cap}(S_j^{\rm ap})\longrightarrow \operatorname{Cap}(\Sigma),
\]
and the corresponding harmonic and equilibrium measures converge as in Theorems~\ref{HarmonicMeasureSchroedingerresultErgodic} and \ref{DOSEquilibriumMeasureSchroedingerresultErgodic}.
\end{theorem}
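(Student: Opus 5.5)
The main task is to prove $d_{\rm H}(S_j^{\rm ap},\Sigma)\to 0$. The consequences then follow by invoking Theorems~\ref{SchroedingerresultErgodic}, \ref{HarmonicMeasureSchroedingerresultErgodic} and \ref{DOSEquilibriumMeasureSchroedingerresultErgodic}. Here the hull $\Omega$ carries its unique shift-invariant probability measure (uniform almost periodicity gives unique ergodicity), $\sigma(H_W)=\Sigma$ for every $W\in\Omega$, and the periodic families are $\{\Delta+W^{(j)}\}_{W\in\Omega}$. I would prove the two halves of the Hausdorff convergence separately, both with norm estimates on the operators.

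\emph{Step 1: $S_j^{\rm ap}$ lies in a small neighbourhood of $\Sigma$.} Fix $W\in\Omega$ and $E\in\sigma(\Delta+W^{(j)})$. Since $W^{(j)}$ is $q_j$-periodic, Floquet theory gives a Bloch solution $\psi$ with $|\psi_{n+q_j}|=|\psi_n|$. I would truncate $\psi$ to a long window $[-Mq_j,Mq_j]$ with a smooth cutoff $\chi$ whose discrete gradient is $O(1/M)$; this gives approximate eigenvectors of $\Delta+W^{(j)}$ with error $O(1/M)$.

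The main obstacle is comparing $W^{(j)}$ with the shifts of $W$. On the block $[kq_j,(k+1)q_j)$ we have $W^{(j)}(n)=W(n-kq_j)$, while $|W(n)-W(n-kq_j)|\le |k|\varepsilon_j$ by iterating the almost-period bound. This error grows with $|k|$, so a naive window of $M$ periods accumulates $M\varepsilon_j$. I would handle it in one of two ways.

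My first attempt is to choose $M=M_j\to\infty$ with $M_j\varepsilon_j\to0$, possible since $\varepsilon_j\to0$. Then $\|(H_W-E)\chi\psi\|\le (C/M_j+M_j\varepsilon_j)\|\chi\psi\|$. This gives ${\rm dist}(E,\Sigma)\le C/M_j+M_j\varepsilon_j=:\delta_j\to0$, uniformly in $W$ and $E$.

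If this fails, I would use an alternative that avoids growth altogether. The point is that $\Delta+W^{(j)}$ lies in the closure of the translates of some element of a related hull. Alternatively, I would apply the one-window argument with periodic boundary conditions: the finite matrix $H^{(j)}_{W,\theta}$ on $\mathbb{Z}/q_j\mathbb{Z}$ with Floquet phase $\theta$ has spectrum whose union over $\theta$ is $\sigma(\Delta+W^{(j)})$. Its eigenvector extends quasi-periodically, and the only defect from $H_W$ appears at the block boundary.

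\emph{Step 2: $\Sigma$ lies in a small neighbourhood of $S_j^{\rm ap}$.} Given $E\in\Sigma$, I would take a Weyl approximate eigenvector $\phi$ of $H_V$ of norm one, with $\|(H_V-E)\phi\|<\eta$ and finite support of length $L$. Since $S_j^{\rm ap}$ is a union over all of $\Omega$, I may shift so that the support lies in $[1,L]$, using the shifted $W=V(\cdot+m)\in\Omega$. For $q_j>L+2$, the block $W(0),\ldots,W(q_j-1)$ contains the support, so $(\Delta+W^{(j)})\phi=H_W\phi$ exactly. Hence ${\rm dist}(E,S_j^{\rm ap})<\eta$ for all large $j$. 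Uniformity in $E$ follows from compactness of $\Sigma$, by taking a finite $\eta$-net of energies.

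Combining Steps 1 and 2 gives $d_{\rm H}(S_j^{\rm ap},\Sigma)\to0$. The set $S_j^{\rm ap}$ is compact, being a closure of a bounded set, and contains the band spectrum of some $q_j$-periodic operator. By Proposition~\ref{PropertiesofDiscriminant} and Theorem~\ref{TotikPeherstorfer}, this gives $Cap(S_j^{\rm ap})\ge1$, which is exactly the lower bound used in the proof of Theorem~\ref{Schroedingerresultregular}. The upper bound comes from the $\Sigma_{\varepsilon}$-neighbourhood argument of Theorem~\ref{Schroedingerresultirregular}, and Theorem~\ref{Totikharmonicmeasure} yields the measure statements.
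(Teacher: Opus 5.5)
Your proposal is correct and follows essentially the same route as the paper: a finitely supported Weyl vector shifted into a single block shows that $\Sigma$ lies near $S_j^{\rm ap}$, and a Bloch solution truncated over $M_j$ periods with $M_j\varepsilon_j\to0$ gives the reverse direction. Your first attempt in Step 1 already works, so the fallback alternatives are unnecessary, and your uniform bound $\delta_j$ together with the finite $\eta$-net makes the passage from the two inclusions to Hausdorff convergence more explicit than in the paper.
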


\begin{proof}
Since $V$ is uniformly almost periodic, its hull is compact in the $\ell^\infty$-topology and the shift orbit of each $W\in\Omega$ is dense in $\Omega$. Moreover, $H_{TW}$ is unitarily equivalent to $H_W$, and $W\mapsto H_W$ is norm continuous. Hence the spectrum is constant on the hull; we denote it by $\Sigma$.

We prove the two Hausdorff inclusions. First let $E\in\Sigma$ and let $\delta>0$. By the Weyl criterion, there exist $W\in\Omega$ and a finitely supported unit vector $\varphi$ such that
\[
\|(H_W-E)\varphi\|<\delta .
\]
After shifting $W$, we may assume that the support of $\varphi$, enlarged by one site, is contained in $\{0,1,\ldots,q_j-1\}$ for all sufficiently large $j$. On this finite set the potentials $W$ and $W^{(j)}$ agree exactly. Therefore
\[
\|(\Delta+W^{(j)}-E)\varphi\|<\delta ,
\]
and hence $\operatorname{dist}(E,S_j^{\rm ap})<\delta$ for all sufficiently large $j$. This proves
\[
\Sigma\subseteq \liminf_{j\to\infty} S_j^{\rm ap}.
\]

For the reverse inclusion, suppose $E_j\in S_j^{\rm ap}$ and $E_j\to E$. It is enough to show that $\operatorname{dist}(E_j,\Sigma)\to0$. Choose $W_j\in\Omega$ such that $E_j\in\sigma(\Delta+W_j^{(j)})$, up to an error tending to zero. Since $W_j^{(j)}$ is periodic, the Weyl criterion for the periodic operator may be realized by cutting off a Bloch solution over many periods. Thus, for any $M_j\to\infty$, there is a unit vector $\psi_j$ supported on an interval of length of order $M_jq_j$ such that
\[
\|(\Delta+W_j^{(j)}-E_j)\psi_j\|\longrightarrow 0,
\]
provided $M_jq_j\to\infty$.

Choose $M_j\to\infty$ so slowly that
\[
M_j\varepsilon_j\longrightarrow 0.
\]
On an interval containing at most $M_j$ consecutive $q_j$-blocks, the almost-period estimate gives
\[
\|W_j^{(j)}-W_j\|_{\infty}
\leq M_j\varepsilon_j .
\]
Therefore
\[
\|(H_{W_j}-E_j)\psi_j\|
\leq
\|(\Delta+W_j^{(j)}-E_j)\psi_j\|
+
M_j\varepsilon_j
\longrightarrow 0.
\]
Since $\sigma(H_{W_j})=\Sigma$, this implies
\[
\operatorname{dist}(E_j,\Sigma)\longrightarrow0.
\]
Hence every limit point of $S_j^{\rm ap}$ belongs to $\Sigma$, i.e.
\[
\limsup_{j\to\infty}S_j^{\rm ap}\subseteq\Sigma.
\]
Combining the two inclusions gives
\[
d_{\rm H}(S_j^{\rm ap},\Sigma)\to0.
\]
The final statement follows by applying Theorem~\ref{SchroedingerresultErgodic}.
\end{proof}

\section{Counterexamples for approximations of rational frequencies}\label{Counterexamples}

In all the convergence results we consider for quasi-periodic Schr\"{o}dinger operators, the irrational frequency $\alpha$ is approximated by a rational sequence $\{p_n/q_n\}_n$. A natural question is to check for similar results when $\alpha$ is rational. In Hausdorff metric, convergence occurs for rational frequency $\alpha$ too, by the result of Avron and Simon \cite{AS83}. We stated their result in Theorem \ref{AvronSimon} parallel to our setting, but it is valid for any $\alpha$ and any approximating sequence of frequencies $\{\alpha_n\}$. However, this is not true for logarithmic capacity, even for the almost Mathieu operator. To provide our counterexamples, we use explicit formulas for the capacities. For the almost Mathieu operator with the potential $v(x) = 2\lambda \cos(2\pi x)$, when $\alpha = p/q$ (simplified to $(p,q)=1$) is the frequency, we have
\begin{equation}\label{CapS+AMO}
        Cap\big(S_+(p/q)\big) = \big(1 + |\lambda|^{q}\big)^{1/q}.
\end{equation}
On the other hand, for the irrational frequency $\alpha$, capacity of the spectrum is given by
\begin{equation}\label{CapSigmaAMO}
        Cap\big(\Sigma(\alpha)\big) = \begin{cases}
      1 & \text{if }~ |\lambda| < 1 \\
      |\lambda| & \text{if }~ |\lambda| \geq 1
   \end{cases},\\ 
\end{equation}
see \cite{AMS90,Sim07,HJ24}. 
\begin{remark}\label{CounterexampleS+Remark}
    Let $\alpha = p/q \in \mathbb{T}$ be the frequency of the almost Mathieu operator $H_{\alpha,\theta}$ with the potential $v(x) = 2\lambda\cos(2\pi x)$ such that $(p,q) = 1$. Then
    \begin{equation}\label{rationalLimit}
        Cap\big(S_+(p/q)\big) = \big(1 + |\lambda|^{q}\big)^{1/q}.
    \end{equation}
    Also, let $\{p_n/q_n\}$ be a sequence that converges to $\alpha$. Then $\alpha - p_n/q_n$ can be represented as $\widetilde{p}_n/\widetilde{q}_n$ such that $(\widetilde{p}_n,\widetilde{q}_n) = 1$. Therefore
    \begin{equation}\label{rationalApproximants}
        Cap\big(S_+(\widetilde{p}_n/\widetilde{q}_n)\big) = \big(1 + |\lambda|^{\widetilde{q}_n}\big)^{1/\widetilde{q}_n}.
    \end{equation}
    Combining \eqref{rationalLimit} and \eqref{rationalApproximants} we get
    \begin{equation}\label{CounterexampleS+}
         \lim_{p_n/q_n \rightarrow \alpha} Cap\big(S_+(\widetilde{p}_n/\widetilde{q}_n)\big) = \begin{cases}
      1 & \text{if }~ |\lambda| < 1 \\
      |\lambda| & \text{if }~ |\lambda| \geq 1
   \end{cases} \neq \big(1 + |\lambda|^{q}\big)^{1/q} = Cap\big(S_+(p/q)\big) 
    \end{equation}
\end{remark}

The counterexample \eqref{CounterexampleS+} shows that even if we impose the zero Lyapunov exponent condition ($0 < |\lambda| \leq 1$) for the almost Mathieu operator) we do not have capacity convergence.

Another question would be to focus on the absolutely continuous spectrum that is approximated by the intersection spectrum $S_-$. As we discussed in Section \ref{Literature}, there are convergence results in various ways in this setting, in particular some very recent results establish set theoretic and measure-theoretic convergence for the almost Mathieu \cite{GK26} and analytic potentials \cite{JLS26}. 

However, for capacities already Mathieu operators provide similar counterexamples for rational frequencies in the limit. For the capacity of the intersection spectrum we have a similar formula:
\begin{equation}\label{CapS-AMO}
        Cap\big(S_-(p_n/q_n)\big) = \big(1 - |\lambda|^{q_n}\big)^{1/q_n}.
\end{equation}
This formula makes sense only for the subcritical ($ 0 < |\lambda| < 1$) almost Mathieu operator, since the intersection spectrum has finitely many points for the critical ($|\lambda| = 1$) case and it is and empty set for the supercritical ($|\lambda| >1$) case. This is consistent with considering the ac spectrum in the limit, since we know that subcritical, critical and supercritical almost Mathieu operator have purely ac, purely singular and pure point spectrum, respectively. 

\begin{remark}
    Let us consider the operator we had in Remark \ref{CounterexampleS+Remark} with the subcriticality restriction ($0 < |\lambda| <1$). Following the constructions we had in Remark \ref{CounterexampleS+Remark} we get the non-convergence as
    \begin{equation}\label{CounterexampleS-}
         \lim_{p_n/q_n \rightarrow \alpha} Cap\big(S_-(\widetilde{p}_n/\widetilde{q}_n)\big) = 1 \neq \big(1 - |\lambda|^{q}\big)^{1/q} = Cap\big(S_-(p/q)\big) 
    \end{equation}
\end{remark}

\section{Acknowledgments}
This research was supported by NSF DMS-2052899, DMS-2155211, and Simons 896624. BH acknowledges the support of NSF through grant DMS-2052519.

\bibliographystyle{abbrv}
\bibliography{references}

\end{document}